\newtheorem{theorem}{Theorem}[section]
\newtheorem{corollary}[theorem]{Corollary}
\newtheorem{fact}[theorem]{Fact}
\newtheorem{lemma}[theorem]{Lemma}
\theoremstyle{definition}
\newtheorem{definition}[theorem]{Definition}
\newtheorem{question}[theorem]{Question}
\newcommand{\rstr}{{\upharpoonright}}
\DeclareMathOperator{\dom}{dom}
\DeclareMathOperator{\ran}{ran}
\DeclareMathOperator{\ns}{ns}
\DeclareMathOperator{\Part}{Part}
\DeclareMathOperator{\Partf}{Part_{fin}}
\DeclareMathOperator{\scrBf}{\mathscr{B}_{fin}}
\DeclareMathOperator{\fin}{fin}
\DeclareMathOperator{\seq}{seq}
\DeclareMathOperator{\seqi}{seq^{1-1}}
\DeclareMathOperator{\symg}{sym_{\mathcal{G}}}
\DeclareMathOperator{\fixg}{fix_{\mathcal{G}}}
\newcommand{\xdasharrow}[2][->]{
\tikz[baseline=-\the\dimexpr\fontdimen22\textfont2\relax]{
\node[anchor=south,font=\scriptsize, inner ysep=1.5pt,outer xsep=2.2pt](x){#2};
\draw[shorten <=3.4pt,shorten >=3.4pt,dashed,#1](x.south west)--(x.south east);
}
}
\newcommand{\xdash}[2][-]{
\tikz[baseline=-\the\dimexpr\fontdimen22\textfont2\relax]{
\node[anchor=south,font=\scriptsize, inner ysep=1.5pt,outer xsep=2.2pt](x){#2};
\draw[shorten <=3.4pt,shorten >=3.4pt,dashed,#1](x.south west)--(x.south east);
}
}
\begin{document}

\title[Cantor's theorem may fail for finitary partitions]{Cantor's theorem may fail for finitary partitions}

\author{Guozhen Shen}
\address{School of Philosophy\\
Wuhan University\\
No.~299 Bayi Road\\
Wuhan\\
Hubei Province 430072\\
People's Republic of China}
\email{shen\_guozhen@outlook.com}

\date{}

\begin{abstract}
A partition is finitary if all its members are finite.
For a set $A$, $\mathscr{B}(A)$ denotes the set of all finitary partitions of $A$.
It is shown consistent with $\mathsf{ZF}$ (without the axiom of choice) that
there exist an infinite set $A$ and a surjection from $A$ onto $\mathscr{B}(A)$.
On the other hand, we prove in $\mathsf{ZF}$ some theorems concerning $\mathscr{B}(A)$
for infinite sets $A$, among which are the following:
\begin{enumerate}
  \item If there is a finitary partition of $A$ without singleton blocks,
        then there are no surjections from $A$ onto $\mathscr{B}(A)$ and
        no finite-to-one functions from $\mathscr{B}(A)$ to $A$.
  \item For all $n\in\omega$, $|A^n|<|\mathscr{B}(A)|$.
  \item $|\mathscr{B}(A)|\neq|\mathrm{seq}(A)|$, where $\mathrm{seq}(A)$
        is the set of all finite sequences of elements of $A$.
\end{enumerate}
\end{abstract}

\subjclass[2020]{Primary 03E10; Secondary 03E25, 03E35}

\keywords{Cantor's theorem, axiom of choice, finitary partition, permutation model}

\maketitle

\section{Introduction}
In 1891, Cantor~\cite{Cantor1891} proved that, for all sets $A$,
there are no surjections from $A$ onto $\mathscr{P}(A)$ (the power set of~$A$).
Under the axiom of choice, for infinite sets $A$, several sets related to $A$
have the same cardinality as $\mathscr{P}(A)$ or $A$; for example, $\mathcal{S}(A)$ (the set of all permutations of~$A$)
and $\mathrm{Part}(A)$ (the set of all partitions of~$A$) have the same cardinality as $\mathscr{P}(A)$,
and $A^2$, $\mathrm{fin}(A)$ (the set of all finite subsets of~$A$), $\mathrm{seq}(A)$ (the set of all finite sequences
of elements of~$A$), and $\mathrm{seq}^{\text{1-1}}(A)$ (the set of all finite sequences without repetition of elements
of $A$) have the same cardinality as $A$. However, without the axiom of choice, this is no longer the case.
In 1924, Tarski~\cite{Tarski1924} proved that the statement that $A^2$ has the same cardinality as $A$
for all infinite sets $A$ is in fact equivalent to the axiom of choice.

Over the past century, various variations of Cantor's theorem have been investigated
in $\mathsf{ZF}$ (the Zermelo--Fraenkel set theory without the axiom of choice),
with $A$ or $\mathscr{P}(A)$ replaced by a set which has the same cardinality under the axiom of choice.
Specker~\cite{Specker1954} proves that, for all infinite sets $A$, there are no injections from $\mathscr{P}(A)$ into $A^2$.
Halbeisen and Shelah~\cite{HalbeisenShelah1994} prove that $|\mathrm{fin}(A)|<|\mathscr{P}(A)|$
and $|\mathrm{seq}^{\text{1-1}}(A)|\neq|\mathscr{P}(A)|\neq|\mathrm{seq}(A)|$.
Forster~\cite{Forster2003} proves that there are no finite-to-one functions from $\mathscr{P}(A)$ to $A$.
Recently, Peng and Shen~\cite{PengShen2022} prove that there are no surjections from $\omega\times A$ onto $\mathscr{P}(A)$,
and Peng, Shen and Wu~\cite{PengShenWu2022} prove that the existence of an infinite set $A$ and
a surjection from $A^2$ onto $\mathscr{P}(A)$ is consistent with $\mathsf{ZF}$.
The variations of Cantor's theorem with $\mathscr{P}(A)$ replaced by $\mathcal{S}(A)$ are
investigated in \cite{DawsonHoward1976,ShenYuan2020a,ShenYuan2020b,SonpanowVejjajiva2019}.

For a set $A$, let $\mathscr{B}(A)$ be the set of all finitary partitions of $A$,
where a partition is finitary if all its members are finite. We use the symbol $\mathscr{B}$
to denote this notion just because $|\mathscr{B}(n)|$ is the $n$-th Bell number.
The axiom of choice implies that $\mathscr{B}(A)$ and $\mathscr{P}(A)$ have the same cardinality for infinite sets $A$,
but each of ``$|\mathscr{B}(A)|<|\mathscr{P}(A)|$'', ``$|\mathscr{P}(A)|<|\mathscr{B}(A)|$'', and
``$|\mathscr{B}(A)|$ and $|\mathscr{P}(A)|$ are incomparable'' for some infinite set $A$ is consistent with $\mathsf{ZF}$.
Recently, Phansamdaeng and Vejjajiva~\cite{PhansamdaengVejjajiva2022} prove that $|\mathrm{fin}(A)|<|\mathscr{B}(A)|$
for all infinite sets $A$.

In this paper, we further study the variations of Cantor's theorem with $\mathscr{P}(A)$ replaced by $\mathscr{B}(A)$.
We prove that Cantor's theorem may fail for finitary partitions in the sense that the existence of an infinite set $A$
and a surjection from $A$ onto $\mathscr{B}(A)$ is consistent with $\mathsf{ZF}$. Nevertheless, we prove in $\mathsf{ZF}$
some theorems concerning $\mathscr{B}(A)$ for infinite sets $A$, among which are the following:
\begin{enumerate}
  \item If there is a finitary partition of $A$ without singleton blocks,
        then there are no surjections from $A$ onto $\mathscr{B}(A)$ and
        no finite-to-one functions from $\mathscr{B}(A)$ to $A$.
  \item For all $n\in\omega$, $|A^n|<|\mathscr{B}(A)|$.
  \item $|\mathscr{B}(A)|\neq|\mathrm{seq}(A)|$.
\end{enumerate}

\section{Some notation and preliminary results}
Throughout this paper, we shall work in $\mathsf{ZF}$.
In this section, we indicate briefly our use of some terminology and notation.
For a function $f$, we use $\dom(f)$ for the domain of $f$, $\ran(f)$ for the range of $f$,
$f[A]$ for the image of $A$ under $f$, $f^{-1}[A]$ for the inverse image of $A$ under $f$,
and $f\rstr A$ for the restriction of $f$ to $A$.
For functions $f$ and $g$, we use $g\circ f$ for the composition of $g$ and $f$.
We write $f:A\to B$ to express that $f$ is a function from $A$ to~$B$,
and $f:A\twoheadrightarrow B$ to express that $f$ is a function from $A$ \emph{onto} $B$.
For a set $A$, $|A|$ denotes the cardinality of $A$.

\begin{definition}
Let $A,B$ be arbitrary sets.
\begin{enumerate}
\item $|A|=|B|$, or $A\approx B$, if there is a bijection between $A$ and $B$.
\item $|A|\leqslant|B|$, or $A\preccurlyeq B$, if there is an injection from $A$ into $B$.
\item $|A|\leqslant^\ast|B|$, or $A\preccurlyeq^\ast B$, if there is a surjection from a subset of $B$ onto $A$.
\item $|A|<|B|$ if $|A|\leqslant|B|$ and $|A|\neq|B|$.
\end{enumerate}
\end{definition}

Clearly, if $A\preccurlyeq B$ then $A\preccurlyeq^\ast B$,
and if $A\preccurlyeq^\ast B$ then $\mathscr{P}(A)\preccurlyeq\mathscr{P}(B)$.

In the sequel, we shall frequently use expressions like ``one can explicitly define'' in our formulations,
which is illustrated by the following example.

\begin{theorem}[Cantor-Bernstein]\label{cbt}
From injections $f:A\to B$ and $g:B\to A$,
one can explicitly define a bijection $h:A\to B$.
\end{theorem}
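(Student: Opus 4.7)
The plan is to use the Banach/Knaster--Tarski fixed-point approach, which produces $h$ by a single set-theoretic formula in $A,B,f,g$ with no choice whatsoever. I would first introduce the operator $T\colon\mathscr{P}(A)\to\mathscr{P}(A)$ defined by
\[
T(X)=A\setminus g[B\setminus f[X]],
\]
and set
\[
X^{\ast}=\bigcup\{X\subseteq A : X\subseteq T(X)\}.
\]
Since $f[\cdot]$ and $g[\cdot]$ preserve inclusions and complementation reverses them, $T$ is monotone, and a short argument shows that $X^{\ast}$ is a fixed point of $T$; equivalently, $A\setminus X^{\ast}=g[B\setminus f[X^{\ast}]]$.

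Next, I would define
\[
h(a)=\begin{cases} f(a) & \text{if } a\in X^{\ast},\\ g^{-1}(a) & \text{if } a\in A\setminus X^{\ast}.\end{cases}
\]
On $A\setminus X^{\ast}$ this is well-defined because, by the fixed-point equation, every such $a$ equals $g(b)$ for a unique $b\in B\setminus f[X^{\ast}]$. The injection $f$ restricts to a bijection $X^{\ast}\to f[X^{\ast}]$, and $g^{-1}$ restricts to a bijection $A\setminus X^{\ast}\to B\setminus f[X^{\ast}]$; since $B$ is the disjoint union of $f[X^{\ast}]$ and $B\setminus f[X^{\ast}]$, gluing these two restrictions yields the desired bijection $h\colon A\to B$.

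There is no real obstacle here, and this is precisely what makes the construction ``explicit'' in the sense of the statement: $X^{\ast}$, and hence $h$, is definable by a single formula in the parameters $A,B,f,g$. The only point that requires care is verifying the fixed-point identity $T(X^{\ast})=X^{\ast}$, which follows by the standard two-step argument. Every $X$ in the family satisfies $X\subseteq X^{\ast}$, whence $X\subseteq T(X)\subseteq T(X^{\ast})$ by monotonicity; taking the union over such $X$ gives $X^{\ast}\subseteq T(X^{\ast})$. Applying $T$ once more then places $T(X^{\ast})$ itself in the family, so $T(X^{\ast})\subseteq X^{\ast}$, and equality follows.
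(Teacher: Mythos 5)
Your proof is correct and fully choice-free: the operator $T(X)=A\setminus g[B\setminus f[X]]$ is monotone, your two-step argument correctly shows that $X^{\ast}$ (the greatest post-fixed point) is a fixed point, and the resulting $h$ is a well-defined bijection since $f$ carries $X^{\ast}$ bijectively onto $f[X^{\ast}]$ while $g^{-1}$ carries $A\setminus X^{\ast}=g[B\setminus f[X^{\ast}]]$ bijectively onto $B\setminus f[X^{\ast}]$. The paper itself offers no proof here --- it simply cites Halbeisen's book --- and the proof given there is the other standard explicit construction: one iterates, setting $A_0=A\setminus g[B]$ and $A_{n+1}=g[f[A_n]]$, and defines $h$ to be $f$ on $\bigcup_{n}A_n$ and $g^{-1}$ elsewhere; that set is likewise a fixed point of your $T$ (the least one), so the two constructions are two instances of the same idea. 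Your Knaster--Tarski version has the mild aesthetic advantage of avoiding the recursion on $\omega$, being given by a single first-order formula in $f$ and $g$; both versions equally satisfy the ``one can explicitly define'' requirement that the paper relies on, namely that the bijection is produced by a fixed class function of $f$ and $g$ with no appeal to choice.
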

\begin{proof}
See~\cite[Theorem 3.14]{Halbeisen2017}.
\end{proof}

\noindent
Formally, Theorem~\ref{cbt} states that one can define a class function $H$ without free
variables such that, whenever $f$ is an injection from $A$ into $B$ and $g$ is an injection from $B$ into $A$,
$H(f,g)$ is defined and is a bijection between $A$ and $B$.
Consequently, if $|A|\leqslant|B|$ and $|B|\leqslant|A|$, then $|A|=|B|$.

\begin{definition}
Let $A$ be a set and let $f$ be a function.
\begin{enumerate}
\item $A$ is \emph{Dedekind infinite} if $\omega\preccurlyeq A$; otherwise, $A$ is \emph{Dedekind finite}.
\item $A$ is \emph{power Dedekind infinite} if $\mathscr{P}(A)$ is Dedekind infinite; otherwise, $A$ is \emph{power Dedekind finite}.
\item $f$ is (\emph{Dedekind}) \emph{finite-to-one} if for every $z\in\ran(f)$, $f^{-1}[\{z\}]$ is (Dedekind) finite.
\end{enumerate}
\end{definition}

Clearly, if $f$ and $g$ are (Dedekind) finite-to-one functions, so is $g\circ f$ (cf.~\cite[Fact~2.8]{Shen2017}).
It is well-known that $A$ is Dedekind infinite if and only if there exists a bijection between $A$ and a proper subset of $A$.
For power Dedekind infinite sets, recall Kuratowski's celebrated theorem.

\begin{theorem}[Kuratowski]\label{kurt}
$A$ is power Dedekind infinite if and only if $\omega\preccurlyeq^\ast A$.
\end{theorem}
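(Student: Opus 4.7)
The plan is to prove both implications.

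For $\omega\preccurlyeq^\ast A\Rightarrow\omega\preccurlyeq\mathscr{P}(A)$, given a surjection $s:S\twoheadrightarrow\omega$ with $S\subseteq A$, the map $n\mapsto s^{-1}[\{n\}]$ sends distinct $n$'s to disjoint non-empty (hence distinct) subsets of $A$, giving an injection $\omega\to\mathscr{P}(A)$.

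For the converse, suppose $f:\omega\to\mathscr{P}(A)$ is an injection. It suffices to construct pairwise disjoint non-empty subsets $(B_n)_{n\in\omega}$ of $A$, since then $a\mapsto n$ (when $a\in B_n$) yields a surjection from $\bigcup_n B_n\subseteq A$ onto $\omega$. After replacing $f$ by $n\mapsto f(n)\triangle f(0)$ (still injective) to arrange $f(0)=\emptyset$, I would examine the non-decreasing unions $U_n=\bigcup_{k\leqslant n}f(k)$. If $U_n$ strictly grows at infinitely many steps $n_0<n_1<\cdots$, canonically selected via the well-ordering of $\omega$, the differences $B_i=U_{n_i}\setminus U_{n_i-1}$ are automatically pairwise disjoint and non-empty, completing the construction.

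The main obstacle is the opposite case, where $U_n$ eventually stabilizes at some $U_N$, so every $f(k)$ is contained in $U_N$. Then $U_N$ partitions into finitely many non-empty ``jump-pieces'' $P_1,\ldots,P_M$, and each $f(k)$ is determined by the tuple $(f(k)\cap P_i)_{i\leqslant M}$, giving an injection $\omega\hookrightarrow\prod_{i}\mathscr{P}(P_i)$. A pigeonhole argument for injections of $\omega$ into finite products (provable in $\mathsf{ZF}$ since $\omega$ is well-ordered, so finite unions of finite subsets of $\omega$ are finite) then yields an injection $\omega\hookrightarrow\mathscr{P}(P_j)$ for a canonically selected $j$, reducing the problem to $P_j$.

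Since this reduction has no obvious well-founded base on subsets of $A$, I would fold everything into a single recursion on $n$: at each stage, define $B_n$ using the least $m$ for which some canonical Boolean combination of $f(0),\ldots,f(m)$ yields a non-empty subset of $A\setminus(B_0\cup\cdots\cup B_{n-1})$. The existence of such $m$ is the key technical point: any Boolean algebra generated by a finite initial segment $f(0),\ldots,f(m)$ has at most $2^{2^{m+1}}$ elements, so it cannot accommodate all of the infinitely many distinct sets $f(k)$, guaranteeing that new material is always available. I expect the careful verification that the canonical choice of Boolean combination at each stage truly avoids $B_0\cup\cdots\cup B_{n-1}$ to be the subtlest part of the argument.
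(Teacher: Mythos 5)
Your easy direction is correct, and the reduction of the hard direction to producing infinitely many pairwise disjoint non-empty subsets of $A$ is the right move; you are also right that the intermediate ``reduce to $P_j$'' step is not well-founded and must be folded into a single recursion. But the final recursion has a genuine gap, and it is not where you locate it. Finding a non-empty Boolean combination of $f(0),\dots,f(m)$ inside $A\setminus(B_0\cup\dots\cup B_{n-1})$ is in fact unproblematic whenever that complement is non-empty: the atoms of the algebra generated by $f(0),\dots,f(m)$ partition $A$ and (for $m$ large enough) refine every previously used combination, so some atom lies entirely in the complement. What your counting bound $|\mathcal{A}_m|\leqslant 2^{2^{m+1}}$ does \emph{not} guarantee is that the recursion never terminates: all of the ``new material'' may accrue inside $B_0\cup\dots\cup B_{n-1}$, and a purely syntactic canonical choice can consume the entire remainder in finitely many steps. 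Concretely, take $A=\omega$, $f(0)=\varnothing$, $f(k)=\{k\}$ for $k\geqslant1$; the atoms at stage $m$ are $\{1\},\dots,\{m\}$ and $\omega\setminus\{1,\dots,m\}$, and as soon as any rule such as ``lexicographically first non-empty combination'' selects the cofinite atom, the complement becomes a finite set that the later $f(k)$ never subdivide, so the construction halts after finitely many stages.

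The missing idea is a \emph{semantic} selection at each stage: among the finitely many atoms $a$ of the current algebra that lie in $A\setminus(B_0\cup\dots\cup B_{n-1})$, one must canonically identify one on which the family of traces $\{f(k)\cap a\mid k\in\omega\}$ is infinite (your own product/pigeonhole argument shows such an atom exists, and ``is infinite'' is a definable property of a canonically indexed finite list, so no choice is needed), \emph{reserve} that atom for the future, and take $B_n$ to be a different atom of a fine enough algebra. Maintaining the invariant that infinitely many distinct traces survive on the untouched part is exactly what keeps the recursion alive forever; without it, the step ``new material is always available outside $B_0\cup\dots\cup B_{n-1}$'' is simply false. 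With that repair your outline does yield a correct $\mathsf{ZF}$ proof (the paper itself only cites Halbeisen for this theorem), but as written the key step fails.
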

\begin{proof}
See~\cite[Proposition~5.4]{Halbeisen2017}.
\end{proof}

The following two facts are Corollaries~2.9 and~2.11 of \cite{Shen2017}, respectively.

\begin{fact}\label{sh01}
If $A$ is power Dedekind infinite and there exists a finite-to-one function from $A$ to $B$,
then $B$ is power Dedekind infinite.
\end{fact}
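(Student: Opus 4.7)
The plan is to reduce the statement, via Kuratowski's theorem (Theorem~\ref{kurt}), to showing $\omega \preccurlyeq^\ast B$ from $\omega \preccurlyeq^\ast A$ and a finite-to-one $f : A \to B$. So I would fix a subset $A' \subseteq A$ together with a surjection $\pi : A' \twoheadrightarrow \omega$, and aim to produce a surjection from some subset of $B$ onto $\omega$.

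The natural candidate subset is $B' = f[A']$. For each $b \in B'$ the set $f^{-1}[\{b\}] \cap A'$ is a non-empty finite subset of $A'$ (non-empty since $b \in f[A']$, finite since $f$ is finite-to-one), so $\pi[f^{-1}[\{b\}] \cap A']$ is a non-empty finite subset of $\omega$ and therefore has a maximum. I would then define
\[\rho : B' \to \omega, \qquad \rho(b) = \max \pi\bigl[f^{-1}[\{b\}] \cap A'\bigr].\]

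Next I would verify that $\ran(\rho)$ is unbounded in $\omega$: given $n \in \omega$, surjectivity of $\pi$ supplies some $a \in A'$ with $\pi(a) = n$, and then $b := f(a) \in B'$ satisfies $n \in \pi[f^{-1}[\{b\}] \cap A']$, so $\rho(b) \geq n$. Hence $\ran(\rho)$ is an infinite subset of $\omega$, which is order-isomorphic to $\omega$ via its canonical increasing enumeration (definable without choice). Composing $\rho$ with this enumeration yields a surjection from $B' \subseteq B$ onto $\omega$, and another application of Kuratowski's theorem gives that $B$ is power Dedekind infinite.

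There is no serious obstacle in this approach. The only mildly delicate point is that $\rho$ itself need not be onto $\omega$ on the nose — a particular $n$ could fail to be the maximum of any fiber's $\pi$-image — but the unboundedness just verified is enough, since the $\preccurlyeq^\ast$ formulation only demands a surjection from a subset of $B$ onto $\omega$ and any infinite subset of $\omega$ is in bijection with $\omega$.
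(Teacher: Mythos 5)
Your proof is correct, and the argument is essentially the standard one: the paper itself only cites \cite[Corollary~2.9]{Shen2017} for this fact, and the proof there proceeds exactly as you do, pulling a surjection onto $\omega$ from a subset of $A$ through the finite-to-one map by taking the maximum over each (finite, non-empty) fibre and then re-enumerating the resulting unbounded subset of $\omega$. All the choices in your construction are canonical, so nothing beyond $\mathsf{ZF}$ is used.
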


\begin{fact}\label{sh02}
If $A^n$ is power Dedekind infinite, so is $A$.
\end{fact}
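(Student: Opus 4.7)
The plan is to convert power Dedekind infiniteness of $A^n$ into the external statement $\omega\preccurlyeq^\ast A^n$ via Kuratowski's theorem (Theorem~\ref{kurt}) and then argue by induction on $n$. For $n=1$ the claim is trivial, so for the inductive step I would assume $A^n$ is power Dedekind infinite and fix a surjection $f\colon S\twoheadrightarrow\omega$ for some $S\subseteq A^n$, viewing $S$ as a subset of $A\times A^{n-1}$. For each $a$ in the first projection $\pi_1[S]$, I set
\[
E_a=\{f(a,\vec b):(a,\vec b)\in S\}\subseteq\omega;
\]
since $f$ is onto $\omega$, one has $\bigcup_{a\in\pi_1[S]}E_a=\omega$. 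The proof then splits into two cases according to whether some fiber $E_a$ is infinite.

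If $E_{a^\ast}$ is infinite for some $a^\ast\in\pi_1[S]$, then $E_{a^\ast}$ is an infinite subset of $\omega$, hence in canonical bijection with $\omega$, and the map $\vec b\mapsto f(a^\ast,\vec b)$ is a surjection from $\{\vec b\in A^{n-1}:(a^\ast,\vec b)\in S\}$ onto $E_{a^\ast}$. Composing with that bijection yields $\omega\preccurlyeq^\ast A^{n-1}$, so by Theorem~\ref{kurt} and the inductive hypothesis, $A$ is power Dedekind infinite.

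If, on the other hand, every $E_a$ is a nonempty finite subset of $\omega$, I would define $h\colon\pi_1[S]\to\omega$ by $h(a)=\max E_a$. For every $k\in\omega$ there is some $a$ with $k\in E_a$, hence $h(a)\geq k$, so $\ran(h)$ is an infinite subset of $\omega$; composing $h$ with the canonical bijection $\ran(h)\to\omega$ gives a surjection from a subset of $A$ onto $\omega$, whence $\omega\preccurlyeq^\ast A$ and $A$ is power Dedekind infinite by Theorem~\ref{kurt} again.

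I do not expect a real obstacle here; the only delicate point is that without choice one cannot simultaneously pick witnesses across all fibers, so the argument must proceed by \emph{either} selecting a single infinite fiber (Case~1) \emph{or} using the canonical definable function $a\mapsto\max E_a$ on all of $\pi_1[S]$ at once (Case~2). Both operations are explicitly available in $\mathsf{ZF}$, and the induction closes immediately.
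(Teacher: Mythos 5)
Your proof is correct. Note that the paper does not actually prove this fact; it simply cites it as Corollary~2.11 of the reference [Shen2017], so there is no internal proof to compare against. Your argument is a valid self-contained $\mathsf{ZF}$ proof: reducing to $\omega\preccurlyeq^\ast A^n$ via Kuratowski's theorem and then running the standard dichotomy for a surjection from a subset of $A\times A^{n-1}$ onto $\omega$ (either some fiber $E_a$ is infinite, in which case $\omega\preccurlyeq^\ast A^{n-1}$ and the induction hypothesis applies, or all fibers are finite, in which case $a\mapsto\max E_a$ has unbounded, hence infinite, range in $\omega$ and yields $\omega\preccurlyeq^\ast A$ directly). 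Both case hypotheses are handled by definable constructions (the increasing enumeration of an infinite subset of $\omega$, the maximum of a nonempty finite subset of $\omega$), and instantiating the existential in Case~1 requires no choice since the conclusion being proved is itself a single propositional statement; your closing remark correctly identifies the only point where choice could have sneaked in. The one pedantic check worth making explicit is the identification of $A^n$ (as the set of functions on $n$) with $A\times A^{n-1}$, which is of course canonical.
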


Let $P$ be a partition of $A$.
We say that $P$ is \emph{finitary} if all blocks of $P$ are finite,
and write $\ns(P)$ for the set of non-singleton blocks of $P$.
For $x\in A$, we write $[x]_P$ for the unique block of $P$ which contains $x$.
The equivalence relation $\sim_P$ on $A$ induced by $P$ is defined by
\[
x\sim_Py\qquad\text{if and only if}\qquad[x]_P=[y]_P.
\]

\begin{definition}
Let $A$ be an arbitrary set.
\begin{enumerate}
\item $\Part(A)$ is the set of all partitions of $A$.
\item $\Partf(A)=\{P\in\Part(A)\mid P\text{ is finite}\}$.
\item $\mathscr{B}(A)=\{P\in\Part(A)\mid P\text{ is finitary}\}$.
\item $\scrBf(A)=\{P\in\mathscr{B}(A)\mid\ns(P)\text{ is finite}\}$.
\item $\fin(A)$ is the set of all finite subsets of $A$.
\item $\seq(A)=\{f\mid f\text{ is a function from an }n\in\omega\text{ to }A\}$.
\item $\seqi(A)=\{f\mid f\text{ is an injection from an }n\in\omega\text{ into }A\}$.
\end{enumerate}
\end{definition}

Below we list some basic relations between the cardinalities of these sets.
We first note that $\fin(A)\preccurlyeq^\ast\seqi(A)\preccurlyeq\seq(A)$.
The next three facts are Facts~2.13, 2.16, and~2.17 of \cite{Shen2017}, respectively.

\begin{fact}\label{sh03}
If $A$ is infinite, then $\fin(A)$ and $\mathscr{P}(A)$ are power Dedekind infinite.
\end{fact}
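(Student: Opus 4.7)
The plan is to reduce both claims to showing $\omega\preccurlyeq^\ast\fin(A)$ and then apply Kuratowski's theorem. Since $\fin(A)$ is a subset of $\mathscr{P}(A)$, any surjection from a subset of $\fin(A)$ onto $\omega$ also witnesses $\omega\preccurlyeq^\ast\mathscr{P}(A)$, so Kuratowski's theorem will then yield both conclusions simultaneously.

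The first step is an auxiliary lemma: if $A$ is infinite, then for every $n\in\omega$ there is an injection from $n$ into $A$ (equivalently, $A$ contains subsets of every finite size). I would prove this by a short induction on $n$. The base case $n=0$ is trivial. For the inductive step, assume an injection $f\colon n\to A$ exists. If its range were all of $A$, then $A$ would be in bijection with $n$, contradicting the hypothesis that $A$ is not finite; hence there is some $a\in A\setminus\ran(f)$, and extending $f$ by $n\mapsto a$ gives an injection $n+1\to A$. This is really the only nontrivial ingredient in the proof.

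The second step is to define the cardinality map
\[
\varphi\colon\fin(A)\to\omega,\qquad\varphi(s)=|s|.
\]
This is well defined because every element of $\fin(A)$ is a finite set, and by the lemma above it is surjective: for each $n\in\omega$ the preimage $\varphi^{-1}[\{n\}]$ is nonempty. Therefore $\omega\preccurlyeq^\ast\fin(A)$, and Theorem~\ref{kurt} (Kuratowski) immediately gives that $\fin(A)$ is power Dedekind infinite.

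Finally, since $\fin(A)\subseteq\mathscr{P}(A)$, the same $\varphi$, now regarded as a surjection from a subset of $\mathscr{P}(A)$ onto $\omega$, witnesses $\omega\preccurlyeq^\ast\mathscr{P}(A)$. A second application of Theorem~\ref{kurt} concludes that $\mathscr{P}(A)$ is power Dedekind infinite. No further set-theoretic difficulty arises beyond the inductive lemma in step one, which is the only place the bare assumption ``infinite'' (rather than ``Dedekind infinite'') actually has to be used.
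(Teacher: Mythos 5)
Your proof is correct and follows essentially the same route as the cited one (the paper defers to \cite[Fact~2.13]{Shen2017}, whose argument is exactly the cardinality map $B\mapsto|B|$ from $\fin(A)$ onto $\omega$ combined with Kuratowski's theorem). Your inductive lemma that an infinite set has subsets of every finite size is indeed the only nontrivial ingredient, and it is choice-free as you argue.
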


\begin{fact}\label{sh04}
$\seqi(A)\preccurlyeq\fin(\fin(A))$.
\end{fact}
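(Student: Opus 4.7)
The plan is to define an explicit injection $F\colon\seqi(A)\to\fin(\fin(A))$ via the standard initial-segments encoding. Given $f\in\seqi(A)$ with $\dom(f)=n\in\omega$, I would send $f$ to
\[
F(f)=\{f[k+1]\mid k\in\dom(f)\},
\]
i.e.\ the family of images of the initial segments $f\rstr(k+1)$ for $k<n$.

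Two things then need to be verified. First, $F(f)\in\fin(\fin(A))$, which is immediate because each $f[k+1]$ is a finite subset of $A$ and there are only finitely many (namely $n$) of them. Second, $F$ is injective, and this is where the trick pays off: since $f$ is an injection, $|f[k+1]|=k+1$, so the members of $F(f)$ have pairwise distinct cardinalities. From $F(f)$ alone one can therefore recover $n=\dom(f)$ as the maximum cardinality occurring, then recover each $f[k+1]$ as the unique member of $F(f)$ of cardinality $k+1$, and finally recover $f(k)$ as the unique element of $f[k+1]\setminus f[k]$ (with the convention $f[0]=\emptyset$).

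I do not expect any real obstacle here. The only mild case to handle is the empty sequence, where $F(\emptyset)=\emptyset$; this causes no trouble because any nonempty $f\in\seqi(A)$ produces a nonempty family $F(f)$, so the empty sequence is the unique preimage of $\emptyset$ under $F$.
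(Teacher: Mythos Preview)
Your proof is correct; this initial-segments encoding is the standard argument for this fact. The paper itself does not supply a proof but only cites \cite[Fact~2.16]{Shen2017}, where the construction is essentially the one you give.
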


\begin{fact}\label{sh05}
There is a finite-to-one function from $\fin(\fin(A))$ to $\fin(A)$.
\end{fact}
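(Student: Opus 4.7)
The plan is to define the finite-to-one map explicitly by taking unions. Namely, let $F\colon\fin(\fin(A))\to\fin(A)$ be given by
\[
F(X)=\bigcup X = \bigcup_{Y\in X} Y.
\]
First I would check that $F$ is well-defined: if $X\in\fin(\fin(A))$, then $X$ is a finite collection of finite subsets of $A$, and a finite union of finite sets is finite, so $F(X)\in\fin(A)$.

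Next I would verify that $F$ is finite-to-one. Fix $S\in\fin(A)$ and consider the fiber $F^{-1}[\{S\}]$. If $X\in\fin(\fin(A))$ and $\bigcup X=S$, then every $Y\in X$ satisfies $Y\subseteq S$, so $X\subseteq\mathscr{P}(S)$, and hence $X\in\mathscr{P}(\mathscr{P}(S))$. Since $S$ is finite, $\mathscr{P}(\mathscr{P}(S))$ is finite, so $F^{-1}[\{S\}]$ is finite.

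There is essentially no obstacle here; the only thing to note is that the construction of $F$ uses no choice, since the union $\bigcup X$ is defined canonically from $X$. The bound $|F^{-1}[\{S\}]|\leqslant 2^{2^{|S|}}$ could be stated for quantitative flavor, but is not needed for the conclusion.
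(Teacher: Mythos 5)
Your proof is correct: the union map $X\mapsto\bigcup X$ is well defined without choice, and the fiber over $S\in\fin(A)$ is contained in the finite set $\mathscr{P}(\mathscr{P}(S))$. This is exactly the standard argument behind the cited Fact~2.17 of \cite{Shen2017}, so nothing further is needed.
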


The next three facts are Facts~2.19, 2.20, and Corollary~2.23 of \cite{ShenYuan2020a}, respectively.

\begin{fact}\label{sh06}
If $A$ is non-empty, then $\seq(A)$ is Dedekind infinite.
\end{fact}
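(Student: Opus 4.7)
The plan is to produce an explicit injection from $\omega$ into $\seq(A)$, which by the definition of Dedekind infinite is exactly what is required. Since $A$ is non-empty, fix any $a\in A$; no form of choice is needed to extract a single element from a single non-empty set. For each $n\in\omega$, let $s_n\colon n\to A$ be the constant sequence with value $a$, i.e., $s_n(i)=a$ for every $i<n$. Each $s_n$ is a function from some $n\in\omega$ to $A$, so $s_n\in\seq(A)$ by the definition of $\seq(A)$.

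It then remains only to verify that the map $n\mapsto s_n$ is injective. This is immediate: if $m\neq n$, then $s_m$ and $s_n$ have distinct domains (namely $m$ and $n$, which are distinct as ordinals), and functions with distinct domains are distinct as sets of ordered pairs. Hence $\omega\preccurlyeq\seq(A)$, so $\seq(A)$ is Dedekind infinite.

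There is no real obstacle in this argument; the whole proof is essentially a single construction together with a trivial injectivity check. The only subtlety worth flagging, which is a standard pitfall in $\mathsf{ZF}$ without choice, is to distinguish ``pick one element from each set in a family'' (which would require some choice principle) from ``pick one element from a single given non-empty set'' (which is a theorem of $\mathsf{ZF}$). Only the latter is invoked when fixing $a\in A$.
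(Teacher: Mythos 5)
Your proof is correct and is the standard argument; the paper itself only cites an external reference for this fact, and the injection $n\mapsto(\text{the constant }a\text{-valued sequence of length }n)$ is exactly the canonical way to prove it in $\mathsf{ZF}$. The injectivity check via distinct domains and the remark that fixing a single element of a non-empty set requires no choice are both accurate.
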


\begin{fact}\label{sh07}
If $A$ is Dedekind finite, then there is a Dedekind finite-to-one function from $\seq(A)$ to $\omega$.
\end{fact}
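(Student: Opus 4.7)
The plan is to use the length function $\ell\colon\seq(A)\to\omega$ defined by $\ell(f)=\dom(f)$. For each $n\in\omega$, the fibre $\ell^{-1}[\{n\}]$ is precisely the set of functions from $n$ into $A$, i.e.\ $A^n$. Hence it suffices to prove the auxiliary claim that $A^n$ is Dedekind finite for every $n\in\omega$ whenever $A$ is Dedekind finite; the length map is then automatically a Dedekind finite-to-one function into $\omega$.

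I would establish this auxiliary claim by induction on $n$. The cases $n=0$ and $n=1$ are immediate. For the inductive step, assume $A^n$ is Dedekind finite and suppose for contradiction that $A^{n+1}\approx A^n\times A$ is Dedekind infinite. Fix an injection $F\colon\omega\to A^n\times A$ and write $F(k)=(g(k),h(k))$ with $g(k)\in A^n$ and $h(k)\in A$. Since $\ran(g)\subseteq A^n$ is a subset of a Dedekind finite set, so is the quotient $\omega/{\sim}$, where $k\sim k'$ iff $g(k)=g(k')$.

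The decisive step is to exploit the well-ordering of $\omega$: from each $\sim$-class pick its least element, obtaining an injection of $\omega/{\sim}$ into $\omega$ whose image $R$ is therefore a Dedekind finite subset of $\omega$. But every Dedekind finite subset of $\omega$ is finite, so $R$ is finite, which means $\omega$ is partitioned into only finitely many $\sim$-classes. By pigeonhole at least one such class $E\subseteq\omega$ is infinite, and $g$ is constant on $E$; the injectivity of $F$ then forces $h\rstr E$ to be an injection from an infinite (hence Dedekind infinite) subset of $\omega$ into $A$, contradicting the Dedekind finiteness of $A$.

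The main obstacle is precisely the auxiliary claim that finite products preserve Dedekind finiteness, and within it the crucial manoeuvre is the conversion of the Dedekind finite quotient $\omega/{\sim}$ into a Dedekind finite, and therefore finite, subset of $\omega$ via least representatives. Without that collapse of ``Dedekind finite'' to ``finite'' inside $\omega$, the pigeonhole step — and with it the whole contradiction — would be unavailable.
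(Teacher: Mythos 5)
Your proof is correct and is the standard argument for this fact; the paper gives no proof in the text, merely citing Fact~2.20 of \cite{ShenYuan2020a}, and the length function $f\mapsto\dom(f)$ together with the $\mathsf{ZF}$ fact that $A^n$ is Dedekind finite whenever $A$ is, is exactly what is needed. Your inductive step --- collapsing the Dedekind finite quotient $\omega/{\sim}$ to a genuinely finite set of least representatives inside $\omega$ and then applying pigeonhole to extract an injection of an infinite subset of $\omega$ into $A$ --- is a correct rendering of the classical proof that a product of two Dedekind finite sets is Dedekind finite.
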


\begin{fact}\label{sh08}
If $A$ is Dedekind infinite, then $\seq(A)\approx\seqi(A)$.
\end{fact}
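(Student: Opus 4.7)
The plan is to invoke Cantor--Bernstein (Theorem~\ref{cbt}). One direction, $\seqi(A)\preccurlyeq\seq(A)$, is immediate from the inclusion $\seqi(A)\subseteq\seq(A)$. The substantive task is to define an injection $\phi:\seq(A)\to\seqi(A)$. My idea is to send each $s\in\seq(A)$ to the concatenation of two disjoint injective blocks: a ``header'' drawn from a fixed countable subset of $A$ which codes the length and repetition pattern of $s$, followed by a ``body'' listing the distinct values appearing in $s$ in order of first occurrence.

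To realise this, fix an injection $\iota:\omega\hookrightarrow A$ (which exists because $A$ is Dedekind infinite), set $C=\{\iota(2n+1):n\in\omega\}$, and let $\beta:A\to A\setminus C$ be the bijection that is the identity outside $\ran(\iota)$ and sends $\iota(n)\mapsto\iota(2n)$. Given $s:n\to A$, form $\tilde s=\beta\circ s$, which takes values in $A\setminus C$. Let $b_0,\ldots,b_{m-1}$ enumerate the range of $\tilde s$ in order of first appearance, and let $p:n\to m$ be the pattern defined by $\tilde s(i)=b_{p(i)}$. Using a fixed recursive injection $\seq(\omega)\hookrightarrow\omega$, encode $p$ as a natural number $N$, and set
\[
\phi(s)=\bigl(\iota(1),\,\iota(3),\,\ldots,\,\iota(2N+1),\,b_0,\,b_1,\,\ldots,\,b_{m-1}\bigr).
\]
The first $N+1$ entries lie in $C$ and are pairwise distinct; the last $m$ entries lie in $A\setminus C$ and are pairwise distinct; hence $\phi(s)\in\seqi(A)$. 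From $\phi(s)$ one recovers $N$ as the largest index whose entry lies in $C$, then decodes $N$ back to $(n,p)$, reads $b_0,\ldots,b_{m-1}$ off the tail, and reconstructs $s(i)=\beta^{-1}(b_{p(i)})$, so $\phi$ is injective.

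The main obstacle — and the whole point of introducing $\beta$ — is ensuring that the header and body of $\phi(s)$ can be told apart from $\phi(s)$ alone. A naive coding that simply prefixes a block of $\ran(\iota)$ to the list of distinct values of $s$ would fail, since those values can themselves lie in $\ran(\iota)$, blurring the boundary between the two blocks. Composing with $\beta$ displaces the body into $A\setminus C$, so the header is precisely the initial segment of $\phi(s)$ lying in $C$, and the decoding goes through cleanly.
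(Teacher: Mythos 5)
Your construction is correct: the header--body decomposition, with $\beta$ displacing the values of $s$ into $A\setminus C$ so that the header (coding the length and repetition pattern) can be unambiguously separated from the body (the distinct values in order of first occurrence), yields a genuine injection $\seq(A)\to\seqi(A)$, and Cantor--Bernstein finishes the argument. The paper itself does not prove this fact but cites \cite[Corollary~2.23]{ShenYuan2020a}, whose argument is essentially the same standard one (use Dedekind infiniteness to absorb a copy of $\omega$ that encodes the pattern of repetitions, and list the distinct entries injectively), so your proof matches the intended approach.
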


\begin{fact}\label{sh09}
$\Part(A)\preccurlyeq\mathscr{P}(A^2)$.
\end{fact}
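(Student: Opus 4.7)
The plan is to use the standard bijective correspondence between partitions of $A$ and equivalence relations on $A$, the latter being precisely certain subsets of $A^2$. Concretely, I would define a map $\Phi\colon\Part(A)\to\mathscr{P}(A^2)$ by
\[
\Phi(P)=\mathord{\sim_P}=\{(x,y)\in A^2\mid [x]_P=[y]_P\},
\]
and verify that $\Phi$ is injective.

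For injectivity I would simply show how to recover $P$ from $\Phi(P)$: given $R=\Phi(P)$, observe that for each $x\in A$ the block $[x]_P$ equals the section $\{y\in A\mid (x,y)\in R\}$, and then $P=\{\{y\in A\mid (x,y)\in R\}\mid x\in A\}$. Hence if $P,Q\in\Part(A)$ satisfy $\Phi(P)=\Phi(Q)$, then every block of $P$ is a block of $Q$ and vice versa, so $P=Q$. This shows $\Part(A)\preccurlyeq\mathscr{P}(A^2)$.

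There is no serious obstacle; the only thing to be mindful of is that the construction must be explicit and uniform in $A$ (no choice needed), which is clear since $\Phi$ is defined by a single set-theoretic formula. The same definition applies verbatim without any appeal to $\mathsf{AC}$.
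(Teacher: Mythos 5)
Your proposal is correct and is exactly the paper's argument: the map $P\mapsto{\sim_P}$ is the injection used in the paper's proof of Fact~\ref{sh09}, and your recovery of the blocks as sections of the relation is the standard (and valid) verification of injectivity. Nothing to add.
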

\begin{proof}
The function that maps each partition $P$ of $A$ to $\sim_P$
is an injection from $\Part(A)$ into $\mathscr{P}(A^2)$.
\end{proof}

\begin{corollary}\label{sh10}
If $A$ is power Dedekind finite, then $\Part(A)$, and hence also $\Partf(A)$ and $\mathscr{B}(A)$, are Dedekind finite.
\end{corollary}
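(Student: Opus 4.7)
The plan is to chain together two facts already established in the paper. By Fact~\ref{sh09}, we have $\Part(A) \preccurlyeq \mathscr{P}(A^2)$, so it suffices to show that $\mathscr{P}(A^2)$ is Dedekind finite whenever $A$ is power Dedekind finite.

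To obtain this, I would invoke Fact~\ref{sh02} in its contrapositive form: since $A$ is power Dedekind finite, $A^2$ must also be power Dedekind finite (otherwise $A^2$ being power Dedekind infinite would force $A$ to be power Dedekind infinite). By the definition of power Dedekind finite, this means $\mathscr{P}(A^2)$ is Dedekind finite.

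Combining the two observations, $\Part(A)$ injects into the Dedekind finite set $\mathscr{P}(A^2)$, and any set that injects into a Dedekind finite set is itself Dedekind finite (if $\omega \preccurlyeq \Part(A) \preccurlyeq \mathscr{P}(A^2)$ then $\omega \preccurlyeq \mathscr{P}(A^2)$, contradiction). Hence $\Part(A)$ is Dedekind finite. Finally, $\Partf(A)$ and $\mathscr{B}(A)$ are subsets of $\Part(A)$, and subsets of Dedekind finite sets are Dedekind finite, so both are Dedekind finite as well.

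There is no real obstacle here; the argument is essentially bookkeeping, assembling Fact~\ref{sh02} and Fact~\ref{sh09} together with the elementary observation that Dedekind finiteness passes to subsets and to sets injectable into a Dedekind finite set.
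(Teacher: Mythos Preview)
Your proposal is correct and follows essentially the same approach as the paper: invoke Fact~\ref{sh02} to pass power Dedekind finiteness from $A$ to $A^2$, conclude that $\mathscr{P}(A^2)$ is Dedekind finite, and then use Fact~\ref{sh09} to pull this back to $\Part(A)$ and its subsets $\Partf(A)$ and $\mathscr{B}(A)$. The paper's proof is more terse but uses exactly these ingredients in the same order.
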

\begin{proof}
If $A$ is power Dedekind finite, so is $A^2$ by Fact~\ref{sh02}, and thus $\mathscr{P}(A^2)$ is Dedekind finite,
which implies that also $\Part(A)$, $\Partf(A)$ and $\mathscr{B}(A)$ are Dedekind finite by Fact~\ref{sh09}.
\end{proof}

\begin{fact}\label{sh11}
$\scrBf(A)\preccurlyeq\fin(\fin(A))$.
\end{fact}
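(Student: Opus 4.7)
The plan is to exhibit an explicit injection $\Phi : \scrBf(A) \to \fin(\fin(A))$ by sending each finitary partition $P$ with only finitely many non-singleton blocks to its set $\ns(P)$ of non-singleton blocks. Since $P \in \scrBf(A)$, every block of $P$ is finite, so in particular every member of $\ns(P)$ is a finite subset of $A$; and $\ns(P)$ itself is finite by assumption. Thus $\ns(P) \in \fin(\fin(A))$, so $\Phi$ is well-defined.

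The only thing to verify is that $\Phi$ is injective. Given $P \in \scrBf(A)$, set $U_P = \bigcup \ns(P) \subseteq A$. The remaining elements $A \setminus U_P$ are, by definition of $\ns(P)$, exactly those points that lie in singleton blocks of $P$. Hence $P$ can be recovered from $\ns(P)$ via the formula
\[
P = \ns(P) \cup \bigl\{\{a\} : a \in A \setminus U_P\bigr\}.
\]
So $\ns(P) = \ns(Q)$ forces $U_P = U_Q$ and then $P = Q$, which proves injectivity.

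There is no real obstacle here; the argument is essentially a bookkeeping check, and the whole construction is explicit, so it automatically delivers a class-function witness of the ``one can explicitly define'' form used elsewhere in the paper.
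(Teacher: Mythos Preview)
Your proof is correct and follows exactly the same approach as the paper: both define the injection $P\mapsto\ns(P)$, with your version simply spelling out the well-definedness and injectivity checks that the paper leaves implicit.
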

\begin{proof}
The function that maps each $P\in\scrBf(A)$ to $\ns(P)$
is an injection from $\scrBf(A)$ into $\fin(\fin(A))$.
\end{proof}

\begin{fact}\label{sh14}
$\scrBf(A)\preccurlyeq\Partf(A)$.
\end{fact}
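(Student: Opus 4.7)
The plan is to build an explicit injection $f\colon\scrBf(A)\to\Partf(A)$ by ``collapsing'' the (possibly infinitely many) singleton blocks of a finitary partition $P\in\scrBf(A)$ into a single block. Given such $P$, let $S_P=A\setminus\bigcup\ns(P)$, the set of elements of $A$ that occur in singleton blocks of $P$. Since $\ns(P)$ is finite and its members are finite, $\bigcup\ns(P)$ is finite; hence $S_P$ is finite if and only if $A$ is finite if and only if $P$ itself has only finitely many blocks.

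Define $f$ by cases. If $S_P$ is finite, set $f(P)=P$; in this case $P$ already belongs to $\Partf(A)$. If $S_P$ is infinite, set $f(P)=\ns(P)\cup\{S_P\}$, which is a partition of $A$ with finitely many blocks, hence lies in $\Partf(A)$.

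The crucial point is injectivity, which hinges on a shape difference between the two cases. In the first case every block of $f(P)$ is finite (because $P$ is finitary). In the second case $S_P$ is infinite, while the remaining blocks of $f(P)$ lie in $\ns(P)$ and are therefore finite, so $S_P$ is the unique infinite block of $f(P)$. Thus from $f(P)$ alone one can read off which case applies, and reconstruct $P$: in the first case $P=f(P)$; in the second case, letting $B$ be the unique infinite block of $f(P)$, one has $P=(f(P)\setminus\{B\})\cup\{\{x\}:x\in B\}$.

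I do not expect a genuine obstacle—the argument is elementary bookkeeping—but the one point that requires care is the case split on finiteness of $S_P$. A uniform definition $f(P)=\ns(P)\cup\{S_P\}$ would fail injectivity as soon as $S_P$ is finite of size at least two, because then $S_P$ would be indistinguishable in $f(P)$ from a genuine non-singleton block; splitting cases exactly eliminates this ambiguity.
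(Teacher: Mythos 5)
Your proof is correct and is essentially the paper's proof: the paper also sends $P$ to $\ns(P)\cup\{\bigcup(P\setminus\ns(P))\}$ (your $\ns(P)\cup\{S_P\}$) when $A$ is infinite, and notes $\scrBf(A)=\Partf(A)$ when $A$ is finite. Your case split on finiteness of $S_P$ coincides with the paper's split on finiteness of $A$, since $S_P$ is finite exactly when $A$ is.
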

\begin{proof}
If $A$ is finite, then $\scrBf(A)=\Partf(A)$; otherwise, the function
that maps each $P\in\scrBf(A)$ to $\ns(P)\cup\{\bigcup(P\setminus\ns(P))\}$
is an injection from $\scrBf(A)$ into $\Partf(A)$.
\end{proof}

\begin{fact}\label{sh12}
If $|A|\geqslant5$, then $\fin(A)\preccurlyeq\scrBf(A)$ and $\mathscr{P}(A)\preccurlyeq\Partf(A)$.
\end{fact}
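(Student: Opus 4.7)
The plan is to exploit $|A|\geqslant 5$ by fixing five distinct elements $a_1,\dots,a_5\in A$, setting $B=A\setminus\{a_1,\dots,a_5\}$, and building both injections by the same idea: a partition of the five-element set $\{a_1,\dots,a_5\}$ acts as a ``signature'' that records the intersection of the input with $\{a_1,\dots,a_5\}$, while the part of the input lying in $B$ is glued onto one or two distinguished blocks of the signature. The Bell-number count $B_5=52$ ensures that there are enough signature partitions to encode all $2^5=32$ subsets of $\{a_1,\dots,a_5\}$.

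For the first injection $\fin(A)\preccurlyeq\scrBf(A)$, I would fix any injection $\Phi\colon\mathscr{P}(\{a_1,\dots,a_5\})\to\Part(\{a_1,\dots,a_5\})$. Given $F\in\fin(A)$, set $S=F\cap\{a_1,\dots,a_5\}$, $F'=F\cap B$, $Q=\Phi(S)$, and define
\[
P_F=\bigl(Q\setminus\{[a_1]_Q\}\bigr)\cup\bigl\{[a_1]_Q\cup F'\bigr\}\cup\bigl\{\{x\}:x\in B\setminus F'\bigr\}.
\]
Every block of $P_F$ is finite and $|\ns(P_F)|\leqslant|Q|\leqslant 5$, so $P_F\in\scrBf(A)$. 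To invert the map, read off $[a_1]_{P_F}=[a_1]_Q\cup F'$, recover $[a_1]_Q$ and $F'$ by intersecting with $\{a_1,\dots,a_5\}$ and with $B$ respectively, identify the remaining blocks of $Q$ as the blocks of $P_F$ contained in $\{a_1,\dots,a_5\}\setminus[a_1]_Q$, and take $S=\Phi^{-1}(Q)$ and $F=S\cup F'$.

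For the second injection $\mathscr{P}(A)\preccurlyeq\Partf(A)$, I would fix an injection $\Psi$ from $\mathscr{P}(\{a_1,\dots,a_5\})$ into the set of partitions of $\{a_1,\dots,a_5\}$ in which $a_1$ and $a_2$ lie in \emph{different} blocks; identifying $a_1$ with $a_2$ shows this set has size $B_5-B_4=52-15=37\geqslant 32$, so $\Psi$ exists. Given $X\subseteq A$, set $S=X\cap\{a_1,\dots,a_5\}$, $X'=X\cap B$, $Q=\Psi(S)$, and define
\[
P_X=\bigl(Q\setminus\{[a_1]_Q,[a_2]_Q\}\bigr)\cup\bigl\{[a_1]_Q\cup X',\;[a_2]_Q\cup(B\setminus X')\bigr\}.
\]
Since $a_1\not\sim_Q a_2$, the blocks $[a_1]_{P_X}$ and $[a_2]_{P_X}$ are distinct, and an analogous dissection recovers $X'$, $[a_1]_Q$, $[a_2]_Q$, the remaining blocks of $Q$, hence $S$ and $X=S\cup X'$. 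The partition $P_X$ has at most $5$ blocks, so $P_X\in\Partf(A)$.

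The only real obstacle is the combinatorial one of ensuring that $\{a_1,\dots,a_5\}$ carries enough signature partitions, settled by the Bell-number inequalities $B_5\geqslant 2^5$ and $B_5-B_4\geqslant 2^5$. Everything else is explicit and uniform in the five fixed elements, so no appeal to choice is needed and the constructions work whether $A$ is finite or infinite.
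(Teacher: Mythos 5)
Your proof is correct, but it takes a genuinely different route from the paper's. The paper also fixes a $5$-element subset $E=\{a,b,c,d,e\}$, but its injections are ``direct with patches'': $f$ sends $B\in\fin(A)$ to the partition whose only non-singleton block is $B$ itself (all other blocks singletons), and then handles by hand the degenerate inputs --- the singletons $\{x\}$, for which this recipe would collapse to the all-singleton partition --- while $g$ sends $C$ essentially to $\{C,A\setminus C\}$ and uses the distinguished atom $a$ together with the trace of $C$ on $E$ to break the $C\leftrightarrow A\setminus C$ symmetry. Your construction instead encodes the trace of the input on the five fixed elements as a ``signature'' partition of $\{a_1,\dots,a_5\}$ (which exists because $B_5=52\geqslant 2^5$, resp.\ $B_5-B_4=37\geqslant 2^5$ for the signatures separating $a_1$ from $a_2$) and carries the rest of the input in one or two distinguished blocks; decoding is then uniform, with no case analysis on the shape of the input, and the counting burden is shifted entirely onto the two Bell-number inequalities. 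Both arguments are explicit and choice-free, and your verification of the partition axioms and of injectivity is complete. One point worth noting if your proof were substituted into the paper: Corollary~\ref{sh39} relies on the injection $\fin(A)\to\scrBf(A)$ from this proof being non-surjective for infinite $A$; your $f$ also has this property, since every partition in its range has at most five non-singleton blocks, so nothing downstream would break.
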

\begin{proof}
Let $E=\{a,b,c,d,e\}$ be a $5$-element subset of $A$.
We define functions $f:\fin(A)\to\scrBf(A)$ and $g:\mathscr{P}(A)\to\Partf(A)$ by setting, for $B\in\fin(A)$ and $C\in\mathscr{P}(A)$,
\[
f(B)=
\begin{cases}
(\{B\}\cup[A\setminus B]^1)\setminus\{\varnothing\}      & \text{if $B$ is not a singleton,}\\
\{\{a,x\},E\setminus\{a,x\}\}\cup[A\setminus(B\cup E)]^1 & \text{if $B=\{x\}$ for some $x\neq a$,}\\
\{\{a\},\{b,c\},\{d,e\}\}\cup[A\setminus E]^1            & \text{if $B=\{a\}$,}
\end{cases}
\]
and
\[
g(C)=
\begin{cases}
\{C,A\setminus C\}\setminus\{\varnothing\} & \text{if $a\notin C$,}\\
(\{C,A\setminus(C\cup E)\}\cup[E\setminus C]^1)\setminus\{\varnothing\} & \text{if $a\in C$, $|C\cap E|\leqslant3$,}\\
\{C\setminus\{a\},\{a\},E\setminus C,A\setminus(C\cup E)\}\setminus\{\varnothing\} & \text{if $a\in C$, $|C\cap E|=4$,}\\
\{C\setminus\{b,c,d,e\},\{b,c\},\{d,e\},A\setminus C\}\setminus\{\varnothing\} & \text{if $E\subseteq C$,}
\end{cases}
\]
where $[D]^1$ denotes the set of $1$-element subsets of $D$.
It is easy to see that $f$ and $g$ are injective.
\end{proof}

The following corollary immediately follows from Facts~\ref{sh03} and~\ref{sh12}.

\begin{corollary}\label{sh13}
If $A$ is infinite, then $\scrBf(A)$ and $\mathscr{B}(A)$ are power Dedekind infinite.
\end{corollary}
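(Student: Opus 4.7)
The plan is to exhibit an injection from $\fin(A)$ into $\scrBf(A)$ and then propagate power Dedekind infiniteness forward along that injection, and then along the inclusion $\scrBf(A)\subseteq\mathscr{B}(A)$.

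First I would note that since $A$ is infinite, in particular $|A|\geqslant 5$, so Fact~\ref{sh12} supplies an injection witnessing $\fin(A)\preccurlyeq\scrBf(A)$. In parallel, Fact~\ref{sh03} gives that $\fin(A)$ is power Dedekind infinite, i.e.\ $\mathscr{P}(\fin(A))$ is Dedekind infinite.

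Next I would combine these using the monotonicity principle recorded right after the definition of $\preccurlyeq^\ast$: from $\fin(A)\preccurlyeq\scrBf(A)$ one gets $\fin(A)\preccurlyeq^\ast\scrBf(A)$ and hence $\mathscr{P}(\fin(A))\preccurlyeq\mathscr{P}(\scrBf(A))$. Since an injection between sets pushes Dedekind infiniteness forward, $\mathscr{P}(\scrBf(A))$ is Dedekind infinite, which is exactly the statement that $\scrBf(A)$ is power Dedekind infinite. The identical argument applied to the inclusion injection $\scrBf(A)\hookrightarrow\mathscr{B}(A)$ then transfers the conclusion to $\mathscr{B}(A)$.

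There is no real obstacle here: the whole corollary is a matter of chaining Facts~\ref{sh03} and~\ref{sh12} with the remark that $\preccurlyeq$ implies $\preccurlyeq^\ast$ and with the preservation of (Dedekind) infiniteness under injections, which is why the paper flags it as an immediate consequence.
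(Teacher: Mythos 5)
Your proof is correct and follows exactly the route the paper intends: the corollary is stated as an immediate consequence of Facts~\ref{sh03} and~\ref{sh12}, and you supply precisely the routine transfer steps (that $\preccurlyeq$ implies $\preccurlyeq^\ast$, hence $\mathscr{P}(\fin(A))\preccurlyeq\mathscr{P}(\scrBf(A))$, and that Dedekind infiniteness is pushed forward along injections) together with the inclusion $\scrBf(A)\subseteq\mathscr{B}(A)$. No gaps.
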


For infinite sets $A$, the relations between the cardinalities of $\fin(A)$, $\mathscr{P}(A)$,
$\scrBf(A)$, $\mathscr{B}(A)$, $\Partf(A)$, $\Part(A)$, and $\mathscr{P}(A^2)$ can be visualized
by the following diagram (where for two sets $X$ and $Y$, $X\longrightarrow Y$ means $|X|\leqslant|Y|$,
$X\xdasharrow{\hspace*{4mm}}Y$ means $|X|<|Y|$, and $X\xdash{\hspace*{4mm}}Y$ means $|X|\neq|Y|$).

\begin{figure}[htb]
\renewcommand{\figurename}{Diagram}
\begin{tikzpicture}
  \node (a) at (0,0)  {$\fin(A)$};
  \node (b) at (-3,2) {$\scrBf(A)$};
  \node (c) at (-3,4) {$\mathscr{B}(A)$};
  \node (d) at (3,2)  {$\mathscr{P}(A)$};
  \node (e) at (3,4)  {$\Partf(A)$};
  \node (f) at (0,6)  {$\Part(A)$};
  \node (g) at (0,8)  {$\mathscr{P}(A^2)$};
  \draw[->] (a)--(b);
  \draw[->] (b)--(c);
  \draw[->] (c)--(f);
  \draw[->] (f)--(g);
  \draw[->] (d)--(e);
  \draw[->] (e)--(f);
  \draw[->,dashed] (a)--(c);
  \draw[->,dashed] (a)--(d);
  \draw[->,dashed] (b)--(e);
  \draw[dashed] (b)--(d);
\end{tikzpicture}
\caption{Relations between the cardinalities of some sets}\label{sh15}
\end{figure}
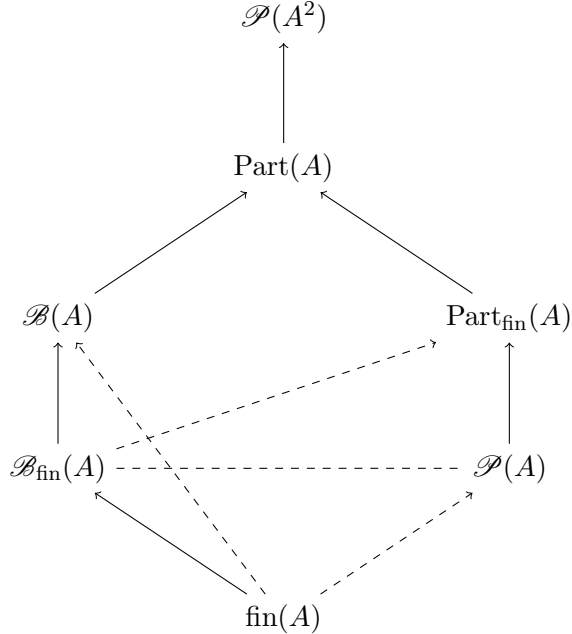

In the above diagram, the $\leqslant$-relations have already been established,
and the inequalities $|\mathrm{fin}(A)|<|\mathscr{P}(A)|$ and $|\mathrm{fin}(A)|<|\mathscr{B}(A)|$
are proved in~\cite[Theorem~3]{HalbeisenShelah1994} and~\cite[Theorem~3.7]{PhansamdaengVejjajiva2022}, respectively.
The inequalities $|\mathscr{B}_{\mathrm{fin}}(A)|<|\mathrm{Part}_{\mathrm{fin}}(A)|$ and
$|\mathscr{B}_{\mathrm{fin}}(A)|\neq|\mathscr{P}(A)|$ will be proved in Section~\ref{sh00}.
The other relations not indicted in the diagram cannot be proved in~$\mathsf{ZF}$, as shown in the next section.

\section{Permutation models and consistency results}
We refer the readers to~\cite[Chap.~8]{Halbeisen2017} or~\cite[Chap.~4]{Jech1973}
for an introduction to the theory of permutation models.
Permutation models are not models of $\mathsf{ZF}$;
they are models of $\mathsf{ZFA}$ (the Zermelo--Fraenkel set theory with atoms).
Nevertheless, they indirectly give, via the Jech--Sochor theorem
(cf.~\cite[Theorem~17.2]{Halbeisen2017} or~\cite[Theorem~6.1]{Jech1973}), models of $\mathsf{ZF}$.

Let $A$ be the set of atoms, let $\mathcal{G}$ be a group of permutations of $A$,
and let $\mathfrak{F}$ be a normal filter on $\mathcal{G}$.
We write $\symg(x)$ for the set $\{\pi\in\mathcal{G}\mid\pi x=x\}$,
where $\pi\in\mathcal{G}$ extends to a permutation of the universe by
\[
\pi x=\{\pi y\mid y\in x\}.
\]
Then $x$ belongs to the permutation model $\mathcal{V}$ determined by $\mathcal{G}$ and $\mathfrak{F}$
if and only if $x\subseteq\mathcal{V}$ and $\symg(x)\in\mathfrak{F}$.

For each $E\subseteq A$, we write $\fixg(E)$ for the set $\{\pi\in\mathcal{G}\mid\forall a\in E(\pi a=a)\}$.
Let $\mathcal{I}\subseteq\mathscr{P}(A)$ be a normal ideal and let $\mathfrak{F}$ be the normal filter on $\mathcal{G}$
generated by the subgroups $\{\fixg(E)\mid E\in\mathcal{I}\}$. Then $x$ belongs to the permutation model $\mathcal{V}$
determined by $\mathcal{G}$ and $\mathcal{I}$ if and only if $x\subseteq\mathcal{V}$ and there exists an $E\in\mathcal{I}$
such that $\fixg(E)\subseteq\symg(x)$; that is, every $\pi\in\mathcal{G}$ fixing $E$ pointwise also fixes $x$.
Such an $E$ is called a \emph{support} of $x$.

\subsection{A model for $|\mathscr{B}(A)|\leqslant^\ast|A|$ and $|\mathscr{B}(A)|<|\mathscr{P}(A)|$}
We construct a permutation model $\mathcal{V}_\mathscr{B}$ in which the set $A$ of atoms satisfies
$|\mathscr{B}(A)|\leqslant^\ast|A|$ and $|\mathscr{B}(A)|<|\mathscr{P}(A)|$.
The atoms are constructed by recursion as follows:
\begin{enumerate}[label=\upshape(\roman*), leftmargin=*, widest=iii]
  \item $A_0=\varnothing$ and $\mathcal{G}_0=\{\varnothing\}$ is the group of all permutations of $A_0$.
  \item $A_{n+1}=A_n\cup\{(n,P,k)\mid P\in\mathscr{B}(A_n)\text{ and }k\in\omega\}$.
  \item $\mathcal{G}_{n+1}$ is the group of permutations of $A_{n+1}$ consisting of
        all permutations $h$ for which there exists a $g\in\mathcal{G}_n$ such that
        \begin{itemize}[leftmargin=*]
          \item $g=h\rstr A_n$;
          \item for each $P\in\mathscr{B}(A_n)$, there exists a permutation $q$ of $\omega$ such that
                $h(n,P,k)=(n,\{g[D]\mid D\in P\},q(k))$ for all $k\in\omega$.
        \end{itemize}
\end{enumerate}
Let $A=\bigcup_{n\in\omega}A_n$ be the set of atoms, let $\mathcal{G}$ be the group of permutations of $A$
consisting of all permutations $\pi$ such that $\pi\rstr A_n\in\mathcal{G}_n$ for all $n\in\omega$,
and let $\mathfrak{F}$ be the normal filter on $\mathcal{G}$ generated by the subgroups $\{\fixg(A_n)\mid n\in\omega\}$.
The permutation model determined by $\mathcal{G}$ and $\mathfrak{F}$ is denoted by $\mathcal{V}_\mathscr{B}$.

\begin{lemma}\label{sh16}
For every $P\in\mathscr{B}(A)$, $P\in\mathcal{V}_\mathscr{B}$ if and only if $\ns(P)\subseteq\mathscr{P}(A_m)$ for some $m\in\omega$.
\end{lemma}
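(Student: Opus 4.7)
The plan is to use the fact that $P \in \mathcal{V}_\mathscr{B}$ iff $P \subseteq \mathcal{V}_\mathscr{B}$ and $\fixg(A_n) \subseteq \symg(P)$ for some $n \in \omega$ (i.e., some $A_n$ supports $P$), reducing the lemma to the equivalent claim: $A_n$ supports $P$ iff $\ns(P) \subseteq \mathscr{P}(A_n)$. Both directions rest on analyzing how a $\pi \in \fixg(A_n)$ acts on atoms via the recursive definition of the $\mathcal{G}_k$.

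For the backward direction, assume $\ns(P) \subseteq \mathscr{P}(A_m)$ and fix $\pi \in \fixg(A_m)$. Every non-singleton block is pointwise fixed since it lies inside $A_m$. For a singleton block $\{a\} \in P$, the image $\pi(a)$ lies in some block $B'$ of $P$; if $B'$ were non-singleton it would be contained in $A_m$, so $\pi(a)$ would be fixed by $\pi^{-1}$, giving $a = \pi(a) \in B'$ and contradicting the disjointness of $\{a\}$ and $B'$. Hence $B' = \{\pi(a)\}$, and $\pi P = P$. Since each block, and $P$ itself, is also a subset of $\mathcal{V}_\mathscr{B}$, this yields $P \in \mathcal{V}_\mathscr{B}$.

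For the forward direction, suppose $A_n$ supports $P$ and, aiming at a contradiction, pick $B \in \ns(P)$ with $B \not\subseteq A_n$. Each atom has a unique level $k$ (the one with atom in $A_{k+1} \setminus A_k$); since $B$ is finite it has a maximal level $\ell \geq n$, realized by some atom $a = (\ell, Q, j) \in B$. For each $j'' \in \omega \setminus \{j\}$ I construct a $\pi \in \fixg(A_\ell) \subseteq \fixg(A_n)$ by setting $\pi \rstr A_\ell = \mathrm{id}$, declaring $\pi$ to swap $(\ell, Q, j) \leftrightarrow (\ell, Q, j'')$ while fixing every other atom of $A_{\ell+1} \setminus A_\ell$, and extending recursively via $\pi(m, R, i) = (m, \{(\pi \rstr A_m)[D] : D \in R\}, i)$ for $m > \ell$. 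Since $\pi \in \symg(P)$, the image $\pi[B]$ is a block of $P$. Any $b \in B \setminus \{a\}$ lies in $A_{\ell+1}$ by maximality of $\ell$; if one further assumes $(\ell, Q, j'') \notin B$, then $b$ is neither of the two atoms swapped by $\pi$, so $\pi(b) = b$, forcing $B = \pi[B] \ni \pi(a) = (\ell, Q, j'')$---a contradiction. Hence $(\ell, Q, j'') \in B$ for every $j'' \neq j$, contradicting the finiteness of $B$.

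The main obstacle is verifying that the recursively defined $\pi$ really is an element of $\mathcal{G}$: at each step $m+1 > \ell+1$ one must supply a $g \in \mathcal{G}_m$ and permutations $q_R$ of $\omega$ fitting the template of item~(iii), and here $g = \pi \rstr A_m$ (available by induction) together with $q_R = \mathrm{id}$ for every $R$ does the job. Once this is in place, the key observation is that the entire block $B$ sits inside $A_{\ell+1}$, where $\pi$ acts as a single transposition, rendering the finiteness contradiction transparent.
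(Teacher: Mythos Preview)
Your proof is correct and follows essentially the same strategy as the paper: in the forward direction both arguments construct a permutation in $\mathcal{G}$ that fixes $A_\ell$ pointwise and swaps $(\ell,Q,j)$ with some $(\ell,Q,j'')$, then use it to contradict $\fixg(A_n)\subseteq\symg(P)$. The only cosmetic difference is that you first pass to the maximal level $\ell$ occurring in the block $B$ (so that all of $B$ lies in $A_{\ell+1}$ and the transposition visibly fixes $B\setminus\{a\}$), whereas the paper simply picks distinct $x\sim_P y$ with $x\notin A_m$, arranges $n'\leqslant n$ without loss of generality, and invokes finiteness once to choose a single $l$ with $(n,Q,l)\notin[y]_P$.
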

\begin{proof}
Let $P\in\mathscr{B}(A)$. If $\ns(P)\subseteq\mathscr{P}(A_m)$ for some $m\in\omega$, then clearly $\fixg(A_m)\subseteq\symg(P)$,
which implies that $P\in\mathcal{V}_\mathscr{B}$. For the other direction, suppose $P\in\mathcal{V}_\mathscr{B}$ and let
$m\in\omega$ be such that $\fixg(A_m)\subseteq\symg(P)$; that is, every $\pi\in\mathcal{G}$ fixing $A_m$ pointwise also fixes $P$.
We claim $\ns(P)\subseteq\mathscr{P}(A_m)$. Assume towards a contradiction that $x\sim_Py$ for some distinct $x,y$ such that
one of $x$ and $y$ is not in $A_m$. Suppose that $x=(n,Q,k)$ and $y=(n',Q',k')$, and assume without loss of generality $n'\leqslant n$.
Then $x\notin A_m$ and thus $m\leqslant n$. Let $l\in\omega$ be such that $(n,Q,l)\notin[y]_P$ and let $q$ be the transposition that
swaps $k$ and $l$. Since $P$ is finitary, such an $l$ exists. Let $h$ be the permutation of $A_{n+1}$ such that $h$ fixes $A_n$ pointwise
and for all $R\in\mathscr{B}(A_n)$ and all $j\in\omega$, $h(n,R,j)=(n,R,q(j))$ if $R=Q$, and $h(n,R,j)=(n,R,j)$ otherwise.
Then $h\in\mathcal{G}_{n+1}$ fixes $A_{n+1}\setminus\{x,(n,Q,l)\}$ pointwise. Hence $h(y)=y$.
Extend $h$ in a straightforward way to some $\pi\in\mathcal{G}$. Then $\pi\in\fixg(A_m\cup\{y\})$
and $\pi(x)=(n,Q,l)\notin[y]_P$. Thus $\pi$ moves $P$, which is a contradiction.
\end{proof}

\begin{lemma}\label{sh17}
In $\mathcal{V}_\mathscr{B}$, $|\mathscr{B}(A)|\leqslant^\ast|A|$ and $|\mathscr{B}(A)|<|\mathscr{P}(A)|$.
\end{lemma}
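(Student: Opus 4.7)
The plan is to produce, inside $\mathcal{V}_\mathscr{B}$, an explicit surjection $F\colon A\twoheadrightarrow\mathscr{B}(A)$, and then derive the strict inequality $|\mathscr{B}(A)|<|\mathscr{P}(A)|$ as a consequence of Cantor's theorem. The natural candidate is the map sending each atom $(n,P,k)$ to the partition
\[
F((n,P,k))=P\cup\{\{x\}\mid x\in A\setminus A_n\}
\]
obtained by extending $P$ to a partition of $A$ via singletons. Each $F((n,P,k))$ is a finitary partition of $A$ whose non-singleton blocks are precisely those of $P$, hence contained in $\mathscr{P}(A_n)$; so by Lemma~\ref{sh16} it belongs to $\mathcal{V}_\mathscr{B}$.

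The first thing to check is that $F$ itself lies in $\mathcal{V}_\mathscr{B}$, which I would do by showing that every $\pi\in\mathcal{G}$ fixes $F$. Given $\pi\in\mathcal{G}$, the recursive description of $\mathcal{G}_{n+1}$ gives $\pi(n,P,k)=(n,\{(\pi\rstr A_n)[D]\mid D\in P\},q(k))$ for some permutation $q$ of $\omega$; moreover $\pi\rstr A_n\in\mathcal{G}_n$ is a permutation of $A_n$, so $\pi$ preserves both $A_n$ and $A\setminus A_n$ setwise. A direct computation then matches $\pi(F((n,P,k)))$ with $F(\pi(n,P,k))$, yielding $\symg(F)=\mathcal{G}\in\mathfrak{F}$.

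For surjectivity, suppose $Q\in\mathscr{B}(A)\cap\mathcal{V}_\mathscr{B}$. Lemma~\ref{sh16} supplies an $m\in\omega$ with $\ns(Q)\subseteq\mathscr{P}(A_m)$; then every block of $Q$ meeting $A_m$ is already contained in $A_m$, so $P:=\{D\in Q\mid D\subseteq A_m\}$ is a finitary partition of $A_m$, and every remaining block of $Q$ is a singleton $\{x\}$ with $x\in A\setminus A_m$. Consequently $F((m,P,0))=Q$, which gives $|\mathscr{B}(A)|\leqslant^\ast|A|$. For the second inequality, the standard trick of sending each $Q\in\mathscr{B}(A)$ to $F^{-1}[\{Q\}]\in\mathscr{P}(A)$ provides an injection in the model, so $|\mathscr{B}(A)|\leqslant|\mathscr{P}(A)|$; and if equality held, composing a bijection $\mathscr{B}(A)\to\mathscr{P}(A)$ with $F$ would yield a surjection $A\twoheadrightarrow\mathscr{P}(A)$, contradicting Cantor. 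The main subtlety is the verification that $F$ is fixed by $\mathcal{G}$, which depends crucially on the fact that the $\mathcal{G}_{n+1}$-action on a triple $(n,P,k)$ transforms $P$ exactly as $\pi$ acts on subsets of $A_n$, making the singleton extension equivariant.
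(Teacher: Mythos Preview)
Your proof is correct and follows essentially the same idea as the paper's: the paper defines an injection $\Phi\colon\mathscr{B}(A)\to\mathscr{P}(A)$ by $\Phi(P)=\{(n_P,P\cap\mathscr{P}(A_{n_P}),k)\mid k\in\omega\}$ (with $n_P$ minimal) whose range consists of pairwise disjoint sets, which is just the dual packaging of your surjection $F$---indeed $\Phi(Q)\subseteq F^{-1}[\{Q\}]$, and both arguments then finish with Cantor's theorem. The only cosmetic difference is that the paper works from the injection side and reads off $\leqslant^\ast$ from disjointness of the image, whereas you build the surjection directly and recover the injection via preimages.
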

\begin{proof}
Let $\Phi$ be the function on $\{P\in\mathscr{B}(A)\mid\exists m\in\omega(\ns(P)\subseteq\mathscr{P}(A_m))\}$ defined by
\[
\Phi(u)=\{(n_P,P\cap\mathscr{P}(A_{n_P}),k)\mid k\in\omega\},
\]
where $n_P$ is the least $m\in\omega$ such that $\ns(P)\subseteq\mathscr{P}(A_m)$. Clearly, $\Phi\in\mathcal{V}_\mathscr{B}$.
In~$\mathcal{V}_\mathscr{B}$, by Lemma~\ref{sh16}, $\Phi$ is an injection from $\mathscr{B}(A)$ into $\mathscr{P}(A)$,
and the sets in the range of $\Phi$ are pairwise disjoint, which implies that $|\mathscr{B}(A)|\leqslant^\ast|A|$.
Since $|\mathscr{P}(A)|\nleqslant^\ast|A|$ by Cantor's theorem, it follows that $|\mathscr{B}(A)|<|\mathscr{P}(A)|$.
\end{proof}

Now the next theorem immediately follows from Lemma~\ref{sh17} and the Jech--Sochor theorem.

\begin{theorem}\label{sh18}
It is consistent with $\mathsf{ZF}$ that there exists an infinite set $A$
for which $|\mathscr{B}(A)|\leqslant^\ast|A|$ and $|\mathscr{B}(A)|<|\mathscr{P}(A)|$.
\end{theorem}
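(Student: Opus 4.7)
The plan is to apply the Jech--Sochor embedding theorem directly. Lemma~\ref{sh17} already establishes the entire mathematical content inside the ZFA permutation model $\mathcal{V}_\mathscr{B}$: the set $A$ of atoms is infinite and witnesses both $|\mathscr{B}(A)|\leqslant^\ast|A|$ and $|\mathscr{B}(A)|<|\mathscr{P}(A)|$. What remains is purely the transfer step.

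To invoke Jech--Sochor, I would verify that the statement ``there exists an infinite set $A$ with $|\mathscr{B}(A)|\leqslant^\ast|A|$ and $|\mathscr{B}(A)|<|\mathscr{P}(A)|$'' is boundedly quantifiable, i.e.\ equivalent to a statement whose quantifiers are restricted to $\mathscr{P}^\alpha(A)$ for some fixed ordinal $\alpha$. This is immediate: $\mathscr{B}(A)\subseteq\mathscr{P}(\mathscr{P}(A))$, surjections from a subset of $A$ onto $\mathscr{B}(A)$ lie in $\mathscr{P}^n(A)$ for a small concrete $n$, and the required non-existence of a bijection between $\mathscr{B}(A)$ and $\mathscr{P}(A)$ is also a statement at a bounded level. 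So one can pick $\alpha$ explicitly (say $\alpha=5$), and Jech--Sochor produces a symmetric extension of the ground ZF model in which the image of $A$ retains the stated cardinal inequalities.

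Since Cantor's theorem holds in $\mathsf{ZF}$, the surjection $A\twoheadrightarrow\mathscr{B}(A)$ guaranteed by $|\mathscr{B}(A)|\leqslant^\ast|A|$ witnesses $|\mathscr{B}(A)|<|\mathscr{P}(A)|$ automatically in the ZF model as well, so the second clause is redundant but harmless to carry along. I expect no real obstacle; the only thing one might pause over is the exact value of $\alpha$, which is routine to read off from the defining formulas.
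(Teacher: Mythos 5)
Your proposal is correct and follows exactly the paper's route: Theorem~\ref{sh18} is obtained by applying the Jech--Sochor embedding theorem to the permutation-model result of Lemma~\ref{sh17}, noting that the relevant statement is boundedly quantifiable. Your added observation that the clause $|\mathscr{B}(A)|<|\mathscr{P}(A)|$ already follows in $\mathsf{ZF}$ from $|\mathscr{B}(A)|\leqslant^\ast|A|$ via Cantor's theorem is also accurate and matches how the paper derives it inside Lemma~\ref{sh17}.
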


\subsection{A model for $|\mathscr{P}(A)|<|\mathscr{B}(A)|<|\mathrm{Part}_{\mathrm{fin}}(A)|$}
We show that the ordered Mostowski model $\mathcal{V}_\mathrm{M}$ (cf.~\cite[pp.~198--202]{Halbeisen2017} or~\cite[\S4.5]{Jech1973})
is a model of this kind. Recall that the set $A$ of atoms carries an ordering $<_\mathrm{M}$ which is
isomorphic to the ordering of the rational numbers, the permutation group $\mathcal{G}$ consists of all automorphisms
of $\langle A,<_\mathrm{M}\rangle$, and $\mathcal{V}_\mathrm{M}$ is determined by $\mathcal{G}$ and finite supports.
Clearly, the ordering $<_\mathrm{M}$ belongs to $\mathcal{V}_\mathrm{M}$ (cf.~\cite[Lemma~8.10]{Halbeisen2017}).
In $\mathcal{V}_\mathrm{M}$, $A$ is infinite but power Dedekind finite (cf.~\cite[Lemma~8.13]{Halbeisen2017}),
and thus, by Fact~\ref{sh02} and Corollary~\ref{sh10}, $\mathscr{P}(A^2)$ and $\mathscr{B}(A)$ are Dedekind finite.

\begin{lemma}\label{sh19}
In $\mathcal{V}_\mathrm{M}$, $\mathscr{B}(A)=\scrBf(A)$.
\end{lemma}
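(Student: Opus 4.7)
The inclusion $\scrBf(A) \subseteq \mathscr{B}(A)$ is immediate and holds in $\mathsf{ZF}$, so my plan is to prove in $\mathcal{V}_\mathrm{M}$ the reverse inclusion: given any $P \in \mathscr{B}(A)$ with $P \in \mathcal{V}_\mathrm{M}$, I will show $\ns(P)$ is finite. Pick a finite support $E = \{e_1 <_\mathrm{M} \cdots <_\mathrm{M} e_n\}$ of $P$. Then $A \setminus E$ is the disjoint union of the maximal $<_\mathrm{M}$-convex sets $I_0 = \{x \in A : x <_\mathrm{M} e_1\}$, $I_1 = \{x \in A : e_1 <_\mathrm{M} x <_\mathrm{M} e_2\}$, \dots, $I_n = \{x \in A : e_n <_\mathrm{M} x\}$, each of which inherits from $<_\mathrm{M}$ the structure of a dense linear order without endpoints. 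Moreover, an element $\pi \in \mathcal{G}$ lies in $\fixg(E)$ iff it preserves each $I_j$ setwise, and within each $I_j$ acts as an arbitrary order-automorphism of $\langle I_j, <_\mathrm{M}\rangle$. The strategy is to show every $B \in \ns(P)$ satisfies $B \subseteq E$, so that $\ns(P) \subseteq \mathscr{P}(E)$ is finite and $P \in \scrBf(A)$.

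The key observation I will use is this dichotomy: for any $\pi \in \fixg(E)$ and any $B \in P$, $\pi(B)$ is again a block of $P$, so either $\pi(B) = B$ or $\pi(B) \cap B = \varnothing$; and if $\pi(B) = B$ then $\pi\rstr B$ is an order-preserving permutation of the finite linearly ordered set $B$, hence the identity on $B$. Thus $\pi$ either fixes $B$ pointwise or moves $B$ entirely off itself. To derive a contradiction from $B \in \ns(P)$ and $B \not\subseteq E$, it suffices to produce a single $\pi \in \fixg(E)$ that moves some element of $B$ yet has $\pi(B) \cap B \neq \varnothing$. Pick $a \in B \setminus E$ and let $I_j$ be the interval containing $a$. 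I distinguish three cases. If $B \cap E \neq \varnothing$, take any $b \in B \cap E$ and any $\pi \in \fixg(E)$ that is the identity outside $I_j$ and sends $a$ to some $a' \in I_j$ with $a' \neq a$; then $b \in \pi(B) \cap B$ but $\pi(a) \neq a$. If $B \cap E = \varnothing$ but $B$ meets some $I_k$ with $k \neq j$, the same sort of $\pi$ fixes any $c \in B \cap I_k$, giving $c \in \pi(B) \cap B$ and $\pi(a) \neq a$. Finally, if $B \subseteq I_j$, pick $a_1 <_\mathrm{M} a_2$ in $B$ and take $\pi \in \fixg(E)$ that is the identity outside $I_j$, fixes $a_1$, and sends $a_2$ to some $a_2' \in I_j$ with $a_1 <_\mathrm{M} a_2'$ and $a_2' \neq a_2$; then $a_1 \in \pi(B) \cap B$ but $\pi(a_2) \neq a_2$.

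The only technical point is producing the order-automorphisms of $I_j$ called for in each case, and this is standard: every dense linear order without endpoints (in particular each $I_j$, bounded or not) is ultrahomogeneous, in the sense that any isomorphism between finite substructures extends to an automorphism of the whole. Hence, given $a \in I_j$ and $a' \in I_j \setminus \{a\}$, there is an order-automorphism of $I_j$ sending $a$ to $a'$; and given $a_1 <_\mathrm{M} a_2$ in $I_j$ together with any $a_2' \in I_j$ satisfying $a_1 <_\mathrm{M} a_2'$ and $a_2' \neq a_2$, there is an order-automorphism of $I_j$ fixing $a_1$ and sending $a_2$ to $a_2'$ (apply ultrahomogeneity inside the dense linear order $\{x \in I_j : a_1 <_\mathrm{M} x\}$ and extend by the identity on the rest of $I_j$). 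Extending such automorphisms by the identity on $A \setminus I_j$ produces the required elements of $\fixg(E)$, and the case analysis then forces every block in $\ns(P)$ into $E$, proving the lemma.
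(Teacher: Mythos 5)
Your proof is correct, and it reaches the same intermediate claim as the paper ($\ns(P)\subseteq\mathscr{P}(E)$ for a finite support $E$ of $P$), but the mechanism of the contradiction is genuinely different. The paper's argument is a two-line version of the standard support trick: if $x\sim_P y$ with $x,y$ distinct and $x\notin E$, it picks $\pi\in\fixg(E\cup\{y\})$ sending $x$ to a point outside the \emph{finite} set $[y]_P$ (possible because the $<_\mathrm{M}$-interval containing $x$ is infinite), so that $\pi(x)\in[\pi(y)]_{\pi(P)}=[y]_{\pi(P)}$ but $\pi(x)\notin[y]_P$, whence $\pi$ moves $P$. You instead arrange $\pi(B)\cap B\neq\varnothing$, conclude $\pi(B)=B$ since blocks of $P=\pi(P)$ are equal or disjoint, and then invoke the rigidity of finite linear orders under order-automorphisms to force $\pi\rstr B=\mathrm{id}$, contradicting that $\pi$ moves a point of $B$. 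Both uses of finitarity are legitimate, and your three-case analysis is exhaustive and each case is correctly handled by ultrahomogeneity of the intervals $I_j$. The trade-off: the paper's version is shorter and transfers verbatim to the other permutation models in the paper (it is essentially the same argument as in Lemmas~\ref{sh16} and~\ref{sh25}, where no linear order is available), whereas your rigidity argument is tied to the ordered Mostowski model; on the other hand, your proof works entirely inside $\fixg(E)$ without enlarging the support to $E\cup\{y\}$, and it isolates cleanly the structural fact that an order-automorphism stabilizing a finite block must fix it pointwise.
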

\begin{proof}
Let $P\in\mathcal{V}_\mathrm{M}$ be a finitary partition of $A$ and let $E$ be a finite support of $P$.
We claim $\ns(P)\subseteq\mathscr{P}(E)$. Assume towards a contradiction that $x\sim_Py$ for some distinct $x,y$ such that $x\notin E$.
Since $P$ is finitary, we can find a $\pi\in\fixg(E\cup\{y\})$ such that $\pi(x)\notin[y]_P$.
Hence $\pi$ moves $P$, contradicting that $E$ is a support of $P$.
Thus $\ns(P)\subseteq\mathscr{P}(E)$, so $P\in\scrBf(A)$.
\end{proof}

The next two lemmas are Lemmas~8.11(b) and~8.12 of~\cite{Halbeisen2017}, respectively.

\begin{lemma}\label{sh20}
Every $x\in\mathcal{V}_\mathrm{M}$ has a least support.
\end{lemma}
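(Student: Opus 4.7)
The plan is to establish the following key closure property: \emph{if $E_1$ and $E_2$ are both supports of $x$, then so is $E_1\cap E_2$}. Granting this, the least support of $x$ is $S:=\bigcap\{E:E\text{ is a support of }x\}$. Indeed, fixing any one support $E_0$, we have $S=\bigcap\{E\cap E_0:E\text{ is a support of }x\}$, and $\{E\cap E_0:E\text{ is a support of }x\}$ is a finite collection (being a family of subsets of the finite set $E_0$), each member of which is a support of $x$ by the closure property. Thus $S$ is a finite intersection of supports, and iterated application of the closure property shows $S$ itself is a support.

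To prove the closure property, fix $\pi\in\fixg(E_1\cap E_2)$; the goal is $\pi x=x$. Since both $\fixg(E_1)\subseteq\symg(x)$ and $\fixg(E_2)\subseteq\symg(x)$, the subgroup $H:=\langle\fixg(E_1)\cup\fixg(E_2)\rangle$ is contained in $\symg(x)$, so it suffices to prove the purely group-theoretic inclusion $\fixg(E_1\cap E_2)\subseteq H$: every $\pi$ fixing $E_1\cap E_2$ pointwise can be written as a finite product of elements of $\fixg(E_1)\cup\fixg(E_2)$.

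For this I would exploit the density of $<_\mathrm{M}$ on $A$ (isomorphic to the ordering of the rationals). Setting $E=E_1\cap E_2$, the complement $A\setminus E$ decomposes as a disjoint union of open intervals, each preserved by $\pi$, so $\pi$ restricts to an order-automorphism of each such interval $I$ separately. Within $I$, the sets $E_1\cap I$ and $E_2\cap I$ are disjoint finite subsets; the task reduces to factoring $\pi\rstr I$ as a finite product of order-automorphisms of $I$, each of which fixes either $E_1\cap I$ or $E_2\cap I$ pointwise, and the density of the order supplies enough ``free'' atoms to carry out such a factorization.

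The main obstacle is that a two-term factorization $\pi=\sigma\tau$ with $\sigma\in\fixg(E_2)$ and $\tau\in\fixg(E_1)$ need not exist: $\pi$ can permute the relative order of $E_1\cap I$ and $E_2\cap I$ in a way that no single $\sigma\in\fixg(E_2)$ can absorb (concretely, if $a\in E_1\setminus E_2$ and $b\in E_2\setminus E_1$ lie in the same interval $I$ with $a<b$ but $\pi(a)>b$, no $\sigma\in\fixg(E_2)$ satisfies $\sigma(a)=\pi(a)$). The fix is to allow three (or more) alternating factors from $\fixg(E_1)$ and $\fixg(E_2)$, using density to interpolate intermediate order-automorphisms that shuffle offending atoms through fresh positions until the composition matches $\pi$.
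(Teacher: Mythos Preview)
The paper does not prove this lemma; it simply cites \cite[Lemma~8.11(b)]{Halbeisen2017}. Your outline is exactly the standard argument found in that reference: reduce to closure of supports under binary intersection via the inclusion $\fixg(E_1\cap E_2)\subseteq\langle\fixg(E_1),\fixg(E_2)\rangle$, decompose $A\setminus(E_1\cap E_2)$ into its open $<_\mathrm{M}$-intervals, and factor $\pi$ interval by interval using density. Your diagnosis of the two-factor obstruction and its remedy (longer alternating words) are both correct. The only step you leave as a sketch is the actual factorization on a single interval $I$ with disjoint finite $F_1,F_2\subseteq I$; one clean way to finish is to show that $\langle\fixg(F_1),\fixg(F_2)\rangle$ already acts transitively on increasing $|F_1|$-tuples in $I$ (move one coordinate at a time, alternately using $\fixg(F_2)$ to slide across points of $F_1$ and $\fixg(F_1)$ to slide across points of $F_2$), whence, containing the full stabilizer $\fixg(F_1)$ of one such tuple, it equals all of $\mathrm{Aut}(I,<_\mathrm{M})$.
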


\begin{lemma}\label{sh21}
If $E$ is an $n$-element subset of $A$, then $E$ supports exactly $2^{2n+1}$ subsets of $A$.
\end{lemma}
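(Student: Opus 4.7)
The plan is to show that the $2n+1$ obvious subsets of $A$ determined by an $n$-element support $E$ generate, under Boolean combinations, exactly the subsets of $A$ supported by $E$.

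First I would enumerate $E = \{a_1 <_{\mathrm{M}} a_2 <_{\mathrm{M}} \cdots <_{\mathrm{M}} a_n\}$ and partition $A$ into the $2n+1$ convex classes
\[
I_0 = \{a \in A \mid a <_{\mathrm{M}} a_1\}, \quad I_{2n} = \{a \in A \mid a_n <_{\mathrm{M}} a\},
\]
the open intervals $I_{2k} = \{a \in A \mid a_k <_{\mathrm{M}} a <_{\mathrm{M}} a_{k+1}\}$ for $1 \leqslant k \leqslant n-1$, and the singletons $I_{2k-1} = \{a_k\}$ for $1 \leqslant k \leqslant n$. Because $<_{\mathrm{M}}$ is isomorphic to the rational order, each open $I_{2k}$ is itself a countable dense linear order without endpoints, and any $\pi \in \fixg(E)$ preserves $<_{\mathrm{M}}$ and fixes each $a_k$, so it maps every $I_j$ onto itself.

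For the easy direction, any set of the form $X_S = \bigcup_{j \in S} I_j$ with $S \subseteq \{0,1,\ldots,2n\}$ is clearly fixed setwise by every $\pi \in \fixg(E)$, giving $2^{2n+1}$ subsets of $A$ supported by $E$. For the converse, suppose $X \subseteq A$ is supported by $E$; I would show that for each $j$, either $I_j \subseteq X$ or $I_j \cap X = \varnothing$. For the singleton classes this is trivial; for an open class $I_{2k}$ the key point is homogeneity: given any two elements $x,y \in I_{2k}$, the back-and-forth argument for countable dense linear orders without endpoints (applied separately to each $I_{2j}$, with the outer endpoints fixed) produces an order-automorphism of $\langle A, <_{\mathrm{M}}\rangle$ fixing every element outside $I_{2k}$ (in particular, all of $E$) and sending $x$ to $y$. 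Such a $\pi$ lies in $\fixg(E)$, so $\pi[X] = X$; hence $x \in X$ iff $y \in X$.

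The main obstacle is the homogeneity step, but this is a standard consequence of the back-and-forth construction for $\langle \mathbb{Q}, < \rangle$ and is cited implicitly by the definition of $\mathcal{V}_{\mathrm{M}}$ in the references. Once it is in hand, the characterization $X = X_S$ for a unique $S \subseteq \{0,1,\ldots,2n\}$ is immediate, and counting subsets of a $(2n+1)$-element index set yields exactly $2^{2n+1}$ supported subsets.
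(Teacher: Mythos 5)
Your proof is correct and is essentially the argument behind the cited result (\cite[Lemma~8.12]{Halbeisen2017}, which the paper quotes without proof): decompose $A$ into the $n$ singletons from $E$ and the $n+1$ open $<_{\mathrm{M}}$-intervals they determine, observe these $2n+1$ nonempty convex pieces are exactly the orbits of $\fixg(E)$ by the homogeneity of countable dense linear orders without endpoints, and conclude that the $E$-supported subsets are precisely the $2^{2n+1}$ unions of orbits. No gaps; the only point worth making explicit is that each piece is nonempty, so distinct index sets give distinct subsets, which your remark about each open interval being itself a dense order without endpoints already covers.
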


\begin{lemma}\label{sh22}
For each $n\in\omega$, let $B_n^\star$ be the number of partitions of $n$ without singleton blocks; that is,
\[
B_n^\star=|\{P\in\mathscr{B}(n)\mid\ns(P)=P\}|.
\]
If $n\geqslant23$, then $2^{2n+2}<B_n^\star$.
\end{lemma}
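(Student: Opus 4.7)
The plan is to prove $B_n^\star>2^{2n+2}$ by induction on $n\geq23$, using the elementary recurrence
\[
B_n^\star\geq(n-1)B_{n-2}^\star.
\]
This bound is immediate from a direct injection: for every $j\in\{1,\ldots,n-1\}$ and every partition $Q$ of the remaining $n-2$ elements into blocks of size at least two, the partition $Q\cup\{\{j,n\}\}$ is an element of $\mathscr{B}(n)$ with no singletons, and different choices of $j$ or $Q$ give different partitions.

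With this recurrence the inductive step is painless: if $B_{n-2}^\star>2^{2n-2}$ and $n\geq17$, then
\[
B_n^\star\geq(n-1)B_{n-2}^\star>(n-1)\cdot2^{2n-2}\geq16\cdot2^{2n-2}=2^{2n+2}.
\]
Because the recurrence advances $n$ by two, the induction needs two consecutive base cases; I would take them at $n=23$ and $n=24$, and then get the claim for all larger odd and all larger even $n$ in parallel.

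The main obstacle will be verifying those two base cases, as the naive lower bounds coming from perfect matchings (yielding $(n-1)!!$ for even $n$ and $\binom{n}{3}(n-4)!!$ for odd $n$) fall short here: at $n=23$ the near-matching estimate is only about $4\times 10^{10}$, whereas $2^{48}\approx 2.8\times 10^{14}$. My plan is to apply the full recurrence
\[
B_n^\star=\sum_{k=2}^{n}\binom{n-1}{k-1}B_{n-k}^\star,
\]
obtained by recording the exact size of the block containing the element $n$, and compute $B_n^\star$ bottom-up from the trivial values $B_2^\star=B_3^\star=1$ and $B_4^\star=4$ as far as $n=24$. The computation is routine if tedious; one can equivalently reduce it to the standard Bell-number table via the identity $B_n^\star=B_{n-1}-B_{n-1}^\star$, which follows from the generating-function relation $\exp(e^x-1-x)=e^{-x}\exp(e^x-1)$. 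Once the two base cases have been pinned down the induction closes the proof for every $n\geq23$.
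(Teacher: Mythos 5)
Your strategy is sound and genuinely different from the paper's, but as written it is incomplete: the entire difficulty is concentrated in the two base cases, and those are exactly the part you defer. The recurrence $B_n^\star\geqslant(n-1)B_{n-2}^\star$ is correct (the block containing $n$ recovers $j$, and deleting it recovers $Q$, so $(j,Q)\mapsto Q\cup\{\{j,n\}\}$ is injective), and the inductive step $(n-1)\cdot2^{2n-2}\geqslant2^{4}\cdot2^{2n-2}=2^{2n+2}$ for $n\geqslant17$ is fine. The base cases are in fact true with room to spare --- $B_{23}^\star\approx4.1\times10^{15}>2^{48}\approx2.8\times10^{14}$ and $B_{24}^\star\approx4.0\times10^{16}>2^{50}\approx1.1\times10^{15}$ --- but establishing them by your bottom-up scheme means tabulating roughly two dozen values growing to sixteen or seventeen digits; until that table is actually produced and checked, the lemma is not proved. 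A smaller quibble: the identity $B_n^\star=B_{n-1}-B_{n-1}^\star$ does not follow from $\exp(e^x-1-x)=e^{-x}\exp(e^x-1)$ (that relation yields the alternating sum $B_n^\star=\sum_k\binom{n}{k}(-1)^{n-k}B_k$); it follows from $V+V'=B$, where $V(x)=\exp(e^x-1-x)$, or combinatorially by matching singleton-free partitions of an $n$-set with partitions of an $(n-1)$-set having at least one singleton block.

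The paper's proof avoids all computation. From the same identity, written as $B_{n-1}=B_{n-1}^\star+B_n^\star$, together with $B_{n-1}^\star\leqslant B_n^\star$, it gets $B_n^\star>B_{n-1}/2$, and then bounds $B_{n-1}$ below by the single $k=8$ term of Dobinski's formula, $B_{n-1}>8^{n-1}/(e\cdot8!)$. Since $8^{n-1}=2^{n-5}\cdot2^{2n+2}$ and $2^{n-5}\geqslant2^{18}>2e\cdot8!$ for $n\geqslant23$, the inequality follows with no base case beyond the one-line check $262144>2e\cdot40320$. What your approach buys is elementarity (no infinite series, no Dobinski); what it costs is a nontrivial numerical verification that the analytic argument renders unnecessary. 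If you wish to keep the inductive framework, you still need some superexponential lower bound on $B_{23}^\star$ and $B_{24}^\star$ that beats $2^{2n+2}$, and the cleanest such bound available is essentially the paper's; at that point the induction becomes redundant, so I would recommend either completing the table honestly or switching to the Dobinski estimate outright.
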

\begin{proof}
For each $n\in\omega$, let $B_n$ be the $n$-th Bell number; that is, $B_n=|\mathscr{B}(n)|$.
Recall Dobinski's formula (see, for example,~\cite{Rota1964}):
\[
B_n=\frac{1}{e}\sum_{k=0}^\infty\frac{k^n}{k!}.
\]
It is easy to see that $B_n=B_n^\star+B_{n+1}^\star$. Hence, for $n\geqslant23$, we have
\[
B_n^\star>\frac{B_{n-1}}{2}>\frac{8^{n-1}}{2e\cdot8!}=\frac{2^{n-5}}{2e\cdot8!}\cdot2^{2n+2}
\geqslant\frac{2^{18}}{2e\cdot8!}\cdot2^{2n+2}>2^{2n+2}.\qedhere
\]
\end{proof}

\begin{lemma}\label{sh23}
In $\mathcal{V}_\mathrm{M}$, $|\mathscr{P}(A)|<|\mathscr{B}(A)|<|\mathrm{Part}_{\mathrm{fin}}(A)|<|\mathrm{Part}(A)|<|\mathscr{P}(A^2)|$.
\end{lemma}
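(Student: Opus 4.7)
All five $\leqslant$-relations are already established in Diagram~\ref{sh15}, so the task is to rule out, in $\mathcal{V}_\mathrm{M}$, an injection $h\colon Y\to X$ for each of the four consecutive pairs $X\preccurlyeq Y$. The plan is a uniform support-counting scheme: if such an $h$ has finite support $E_0$, then for any finite $F\supseteq E_0$ the map $h$ restricts to an injection from the $F$-supported elements of $Y$ (those $y\in Y$ with $\fixg(F)\subseteq\symg(y)$) into those of $X$; it therefore suffices to exhibit an $F\supseteq E_0$ for which $Y$ has strictly more $F$-supported elements than $X$.

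The heart of the argument is a uniform count, for $n=|F|$, of the $F$-supported elements of the five sets involved. Lemma~\ref{sh21} supplies $2^{2n+1}$ for $\mathscr{P}(A)$. For $\mathscr{B}(A)=\scrBf(A)$ (Lemma~\ref{sh19}), an $F$-supported $P$ is characterized by $\ns(P)\subseteq\mathscr{P}(F)$, so such $P$ are in bijection with partitions of arbitrary subsets of $F$ into blocks of size at least $2$, giving $\sum_{k=0}^{n}\binom{n}{k}B_{k}^{\star}=B_{n}$. For $\Part(A)$ and $\Partf(A)$, I extend the orbit analysis from Lemma~\ref{sh19}: since $\fixg(F)$ acts transitively on each of the $n+1$ open intervals that $F$ cuts out of $(A,<_\mathrm{M})$ and doubly transitively within each, every block $B$ of an $F$-supported partition is either itself $F$-supported (a union of some elements of $F$ together with some whole intervals) or else is a single element of some interval $I_j$ whose $\fixg(F)$-orbit cuts $I_j$ entirely into singleton blocks. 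Recording the latter intervals by $J\subseteq\{0,\dots,n\}$, the $F$-supported partitions are parametrized by $J$ together with a partition of the $(2n+1-|J|)$-element set $F\cup\{I_j\mid j\notin J\}$, yielding the total $\sum_{k=0}^{n+1}\binom{n+1}{k}B_{2n+1-k}$; the summand $B_{2n+1}$ with $J=\varnothing$ counts exactly the finite partitions, since any nonempty $J$ produces infinitely many singleton blocks. Finally, a direct orbit enumeration on $A^2$ (splitting by whether each coordinate lies in $F$ or in one of the $n+1$ intervals, and in the same-interval case by the three ordering cases) yields $4n^2+6n+3$ orbits of $\fixg(F)$, and hence $2^{4n^2+6n+3}$ $F$-supported subsets of $A^2$.

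Given these counts, each strict inequality now follows by taking $F\supseteq E_0$ with $n=|F|$ suitably large. For $|\mathscr{P}(A)|<|\mathscr{B}(A)|$, Lemma~\ref{sh22} gives $B_n\geqslant B_n^\star>2^{2n+2}>2^{2n+1}$ once $n\geqslant 23$. For $|\mathscr{B}(A)|<|\Partf(A)|$, the trivial comparison $B_{2n+1}>B_n$ suffices for moderate $n$. For $|\Partf(A)|<|\Part(A)|$, already at $n=|E_0|$ any nonempty $J$ contributes a partition absent from $\Partf(A)$, so the $\Part(A)$-count strictly exceeds $B_{2n+1}$. For $|\Part(A)|<|\mathscr{P}(A^2)|$, the crude bound $\sum_{k=0}^{n+1}\binom{n+1}{k}B_{2n+1-k}\leqslant 2^{n+1}B_{2n+1}=2^{O(n\log n)}$ is eventually dominated by $2^{4n^2+6n+3}$. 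The only real obstacle is the orbit-theoretic dichotomy for arbitrary (non-finitary) partitions: one must verify that a block meeting some interval in exactly one element forces that entire interval to consist of singleton blocks. This upgrades Lemma~\ref{sh19} from finitary to arbitrary partitions, and as a by-product localizes the gap between $\Partf(A)$ and $\Part(A)$ precisely in the nonempty-$J$ summands.
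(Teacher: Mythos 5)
Your support-counting scheme is sound and the orbit analysis in $\mathcal{V}_\mathrm{M}$ is correct: the counts $2^{2n+1}$, $B_n$, $B_{2n+1}$, $\sum_{k}\binom{n+1}{k}B_{2n+1-k}$ and $2^{4n^2+6n+3}$ are all right, and they do rule out injections in the four ``wrong'' directions. For the last three links this is a legitimate (if much heavier) alternative to the paper's one-line argument, which simply observes that $\mathscr{P}(A^2)$ is Dedekind finite in $\mathcal{V}_\mathrm{M}$ and that the injections of Facts~\ref{sh09} and~\ref{sh14} are not surjective, so the $\leqslant$-relations already in the diagram are automatically strict.

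There is, however, a genuine gap at the first link. You assert that all the needed $\leqslant$-relations are already established in Diagram~\ref{sh15}, and in particular that $|\mathscr{P}(A)|\leqslant|\mathscr{B}(A)|$ is known. It is not: there is no arrow from $\mathscr{P}(A)$ to $\mathscr{B}(A)$ in the diagram, the relation is not a theorem of $\mathsf{ZF}$ (Theorem~\ref{sh26} shows the two cardinals can be incomparable), and proving it inside $\mathcal{V}_\mathrm{M}$ is the main content of the paper's proof of this lemma. Your counting only yields $|\mathscr{B}(A)|\nleqslant|\mathscr{P}(A)|$, hence $|\mathscr{P}(A)|\neq|\mathscr{B}(A)|$, which does not give $<$. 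To get $<$ one must actually construct an injection $f:\mathscr{P}(A)\to\mathscr{B}(A)$ in the model, and the obstacle is precisely that matching by least support fails for small supports: an $n$-element set $E$ supports $2^{2n+1}$ subsets but only $B_n$ finitary partitions, and $B_n<2^{2n+1}$ for small $n$ (for $|E|=1$ there is exactly one such partition against eight subsets). The paper handles this by fixing a $46$-element set $D$ and rerouting a subset $C$ with small least support $E$ to a finitary partition whose non-singleton blocks cover $D\bigtriangleup E$, which has at least $23$ elements, so that Lemmas~\ref{sh20}--\ref{sh22} guarantee both enough room and injectivity across the two cases. Some such device is indispensable and is entirely missing from your plan.
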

\begin{proof}
In $\mathcal{V}_\mathrm{M}$, since $\mathscr{P}(A^2)$ is Dedekind finite, and the injections constructed
in the proofs of Facts~\ref{sh09} and~\ref{sh14} are clearly not surjective, it follows that
$|\mathscr{B}_{\mathrm{fin}}(A)|<|\mathrm{Part}_{\mathrm{fin}}(A)|<|\mathrm{Part}(A)|<|\mathscr{P}(A^2)|$.
By Lemma~\ref{sh19}, it remains to show that $|\mathscr{P}(A)|<|\mathscr{B}(A)|$.

For a finite subset $E$ of $A$, we can use $<_\mathrm{M}$ to define an ordering of the subsets of $A$ supported by $E$
and an ordering of the finitary partitions $P$ of $A$ with $\ns(P)\subseteq\mathscr{P}(E)$.
Let $D=\{a_i\mid i<46\}$ be a $46$-element subset of $A$. In $\mathcal{V}_\mathrm{M}$,
we define an injection $f$ from $\mathscr{P}(A)$ into $\mathscr{B}(A)$ as follows.

Let $C\in\mathcal{V}_\mathrm{M}$ be a subset of $A$. By Lemma~\ref{sh20}, $C$ has a least support $E$.
Suppose that $C$ is the $k$-th subset of $A$ with $E$ as its least support.
Let $n=|D\bigtriangleup E|$, where $\bigtriangleup$ denotes the symmetric difference.
Now, if $|E|\geqslant23$, define $f(C)$ to be the $k$-th finitary partition $P$ of $A$ with $\bigcup\ns(P)=E$;
otherwise, define $f(C)$ to be the $(B_n^\star-k-1)$-th finitary partition $P$ of $A$ with $\bigcup\ns(P)=D\bigtriangleup E$.
In the second case, $n\geqslant23$, and thus $B_n^\star-k-1>2^{2n+1}$ by Lemmas~\ref{sh21} and~\ref{sh22}.
Thus $f$ is injective. Since $D$ is a finite support of~$f$, it follows that $f\in\mathcal{V}_\mathrm{M}$.

Finally, since $\mathscr{B}(A)$ is Dedekind finite and $f$ is not surjective, it follows that $|\mathscr{P}(A)|<|\mathscr{B}(A)|$.
\end{proof}

Now the next theorem immediately follows from Lemmas~\ref{sh19} and~\ref{sh23} and the Jech--Sochor theorem.

\begin{theorem}\label{sh24}
It is consistent with $\mathsf{ZF}$ that there exists an infinite set $A$ for which $\mathscr{B}(A)=\scrBf(A)$
and $|\mathscr{P}(A)|<|\mathscr{B}(A)|<|\mathrm{Part}_{\mathrm{fin}}(A)|<|\mathrm{Part}(A)|<|\mathscr{P}(A^2)|$.
\end{theorem}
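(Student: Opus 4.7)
The plan is to invoke the Jech--Sochor theorem to transfer the content of Lemmas~\ref{sh19} and~\ref{sh23} from the permutation model $\mathcal{V}_\mathrm{M}$, which is a model of $\mathsf{ZFA}$, to a genuine model of $\mathsf{ZF}$. Those two lemmas already establish, inside $\mathcal{V}_\mathrm{M}$, both the equality $\mathscr{B}(A)=\scrBf(A)$ and the chain
\[
|\mathscr{P}(A)|<|\mathscr{B}(A)|<|\mathrm{Part}_{\mathrm{fin}}(A)|<|\mathrm{Part}(A)|<|\mathscr{P}(A^2)|,
\]
so no further combinatorial work is required; what remains is purely metamathematical.

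Concretely, I would fix an ordinal $\alpha$ large enough that $V_\alpha$ computed in $\mathcal{V}_\mathrm{M}$ contains the set $A$ of atoms together with $\mathscr{P}(A)$, $\mathscr{P}(A^2)$, $\mathrm{Part}(A)$, $\mathrm{Part}_{\mathrm{fin}}(A)$, $\mathscr{B}(A)$, $\scrBf(A)$, and every function that witnesses one of the five strict cardinality relations above. Since these sets all sit within a finite iteration of the power-set operation applied to $A$, a modest $\alpha$ suffices (one can take $\alpha=\mathrm{rank}(A)+\omega$). The Jech--Sochor theorem then supplies a model $\mathcal{N}$ of $\mathsf{ZF}$ and a transitive class $M\subseteq\mathcal{N}$ equipped with an $\in$-isomorphism $\iota\colon V_\alpha^{\mathcal{V}_\mathrm{M}}\to M$; setting $A'=\iota(A)$ yields the candidate infinite set in $\mathcal{N}$.

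The final and only genuinely delicate step is absoluteness bookkeeping. The operators $\mathscr{P}$, $\mathrm{Part}$, $\mathrm{Part}_{\mathrm{fin}}$, $\mathscr{B}$ and $\scrBf$ all have $\Delta_0$ definitions relative to a transitive set of sufficient rank, so the $\iota$-image of each of the sets $\mathscr{P}(A),\ldots,\mathscr{P}(A^2)$ computed in $\mathcal{V}_\mathrm{M}$ agrees with the corresponding set computed in $\mathcal{N}$ from $A'$; in particular $\mathscr{B}(A')=\scrBf(A')$ in $\mathcal{N}$. Similarly, any hypothetical surjection in $\mathcal{N}$ refuting one of the strict inequalities would be an element of $M$, hence the $\iota$-image of an object in $V_\alpha^{\mathcal{V}_\mathrm{M}}$, contradicting Lemma~\ref{sh23}. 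The main obstacle, such as it is, is verifying this bookkeeping uniformly for all five cardinality relations and for the equality $\mathscr{B}(A')=\scrBf(A')$; this is not deep but must be done carefully. Once complete, the conclusion of the theorem is immediate.
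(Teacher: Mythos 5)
Your proposal is correct and is essentially the paper's own proof: the paper likewise derives Theorem~\ref{sh24} immediately from Lemmas~\ref{sh19} and~\ref{sh23} via the Jech--Sochor theorem, the point being that all the relevant sets and witnessing functions live within finitely many iterations of the power set over $A$, so the statement is boundable and transfers to a $\mathsf{ZF}$ model. Your additional absoluteness bookkeeping is the standard (and correct) justification that the paper leaves implicit.
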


\subsection{A model in which $\mathscr{B}(A)$ is incomparable with $\mathscr{P}(A)$ or $\Partf(A)$}
We use a variation of the basic Fraenkel model (cf.~\cite[pp.~195--196]{Halbeisen2017} or~\cite[\S4.3]{Jech1973}).
Let $A$ be an uncountable set of atoms, let $\mathcal{G}$ be the group of all permutations of $A$,
and let $\mathcal{V}_\mathrm{F}$ be the permutation model determined by $\mathcal{G}$ and countable supports.

\begin{lemma}\label{sh25}
In $\mathcal{V}_\mathrm{F}$, $|\mathscr{B}(A)|\nleqslant|\mathrm{Part}_{\mathrm{fin}}(A)|$,
$|\mathscr{B}_{\mathrm{fin}}(A)|\nleqslant|\mathscr{P}(A)|$, and $|\mathscr{P}(A)|\nleqslant|\mathscr{B}(A)|$.
\end{lemma}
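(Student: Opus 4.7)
The plan is to assume for each claim that the relevant injection $f$ exists with countable support $E$ and derive a contradiction by analyzing how permutations fixing $E$ act on certain carefully chosen inputs and their $f$-images. As a uniform preliminary, an argument analogous to Lemma~\ref{sh19} yields three structural dichotomies in $\mathcal{V}_\mathrm{F}$: for any $P\in\mathscr{B}(A)$ with countable support $F$, $\ns(P)\subseteq\mathscr{P}(F)$; for any $Q\in\Partf(A)$ with countable support $F$, $Q$ consists of exactly one cocountable block (containing $A\setminus F$) together with finitely many blocks contained in $F$; and for any $X\in\mathscr{P}(A)$ with countable support $F$, either $X\subseteq F$ or $A\setminus F\subseteq X$.

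For $|\mathscr{B}(A)|\nleqslant|\Partf(A)|$, fix a countably infinite $U\subseteq A\setminus E$, and for each pairing (fixed-point-free involution) $\rho$ of $U$ let $P_\rho\in\mathscr{B}(A)$ be the finitary partition whose non-singleton blocks are exactly the $\rho$-pairs. Then $f(P_\rho)\in\Partf(A)$ has support $E\cup U$, and the centralizer of $\rho$ inside $\mathrm{Sym}(U)$, extended to $A$ by the identity, fixes $P_\rho$ and hence $f(P_\rho)$. This centralizer acts transitively on $U$, so any invariant subset of $U$ equals $\varnothing$ or $U$; combined with the finiteness of the block set of $f(P_\rho)$, each block $B$ of $f(P_\rho)$ must satisfy $B\cap U\in\{\varnothing,U\}$, since otherwise its centralizer-orbit would produce infinitely many pairwise disjoint blocks. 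A further swap argument (using a permutation in $\fixg(E)$ that conjugates one pairing to another) then shows that $f(P_\rho)$ is determined by $U$ together with some $E$-side data independent of $\rho$, contradicting injectivity since $U$ admits many distinct pairings.

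For $|\scrBf(A)|\nleqslant|\mathscr{P}(A)|$, pick four distinct $a,b,c,d\in A\setminus E$ and consider the three $2$-pair partitions $P_1,P_2,P_3\in\scrBf(A)$ having $\ns$-parts $\{\{a,b\},\{c,d\}\}$, $\{\{a,c\},\{b,d\}\}$, $\{\{a,d\},\{b,c\}\}$. Each $f(P_i)$ has support $E\cup\{a,b,c,d\}$, and its intersection with $\{a,b,c,d\}$ must be invariant under the $8$-element dihedral subgroup of $\mathrm{Sym}(\{a,b,c,d\})$ stabilizing the corresponding pairing; since this subgroup is transitive on $\{a,b,c,d\}$, the intersection is $\varnothing$ or $\{a,b,c,d\}$. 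A swap argument between the three pairings (via $\fixg(E)$-permutations that permute $a,b,c,d$) further shows that the $E$-part of $f(P_i)$ is the same for all $i$, so $f(P_i)$ depends only on the unordered $4$-set and not on the pairing, contradicting injectivity.

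For $|\mathscr{P}(A)|\nleqslant|\mathscr{B}(A)|$, consider any countably infinite $B\subseteq A\setminus E$. The image $f(B)\in\mathscr{B}(A)$ is fixed by $\mathrm{Sym}(B)$ (extended trivially to $A$), and for any non-singleton block $B'\in\ns(f(B))$, which is a finite subset of $E\cup B$, the $\mathrm{Sym}(B)$-orbit of $B'$ equals $\{(B'\cap E)\cup S:S\subseteq B,\,|S|=|B'\cap B|\}$; these sets must all be pairwise disjoint blocks of $f(B)$, which is impossible unless $B'\cap B=\varnothing$. Hence $\ns(f(B))\subseteq\fin(E)$, so $f(B)$ is supported by $E$. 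But any two distinct countably infinite $B,B'\subseteq A\setminus E$ are related by some $\pi\in\fixg(E)$, giving $f(B)=\pi(f(B))=f(\pi(B))=f(B')$ and contradicting injectivity. The main technical obstacle is the centralizer-orbit analysis in the first part, where the transitivity of the centralizer and the finiteness of $\Partf$-partitions must be combined to rule out blocks with mixed intersection $B\cap U$.
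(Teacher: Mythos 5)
Your proposal is correct; all three parts go through. The underlying mechanism is the same as the paper's --- produce a permutation in $\fixg(E)$ that fixes one side of the purported injection while moving the other --- but your implementations of (1) and (3) take a genuinely different, and heavier, route. For (1) the paper uses a \emph{single} partition $P$ with pairs $\{a_{2n},a_{2n+1}\}$ and a pigeonhole: $f(P)$ has finitely many blocks, so some $a_{2i},a_{2j}$ with $i\neq j$ share a block, and two transpositions finish the argument. Your route instead analyses, for each block $B$ of $f(P_\rho)$, the orbit of $B\cap U$ under the full centralizer of the pairing; the key point --- that this orbit is infinite unless $B\cap U\in\{\varnothing,U\}$, even when $B\cap U$ is infinite and co-infinite in $U$ --- is true (count the pairs of each intersection type $0$, $1$, $2$) but deserves more justification than "transitivity", since individual blocks need not be invariant. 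Granting it, your conjugation step is sound. For (3) the paper only verifies that the two singletons $\{x\},\{y\}$ are blocks of $h(C)$ and swaps them; you prove the stronger fact $\ns(f(B))\subseteq\mathscr{P}(E)$ and then move all of $B$ at once, which works but silently uses that any two countably infinite subsets of $A\setminus E$ have equinumerous complements there (true, since $A$ is uncountable and supports are countable). Part (2) is essentially the paper's argument. One slip in (3): the $\mathrm{Sym}(B)$-orbit of a block $B'$ should be $\{(B'\setminus B)\cup S:\dots\}$ rather than $\{(B'\cap E)\cup S:\dots\}$; identifying $B'\setminus B$ with $B'\cap E$ is only licensed after invoking your preliminary dichotomy $\ns(f(B))\subseteq\mathscr{P}(E\cup B)$. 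What your approach buys is a sharper structural description of the images (blocks meet $U$ trivially; $\ns(f(B))$ lives in $E$); what the paper's buys is brevity.
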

\begin{proof}
(1) $|\mathscr{B}(A)|\nleqslant|\mathrm{Part}_{\mathrm{fin}}(A)|$.
Assume towards a contradiction that in $\mathcal{V}_\mathrm{F}$ there is an injection $f$
from $\mathscr{B}(A)$ into $\Partf(A)$. Let $B$ be a countable support of $f$.
Let $\{a_n\mid n\in\omega\}\subseteq A\setminus B$ with $a_i\neq a_j$ whenever $i\neq j$.
Consider the finitary partition $P=\{\{a_{2n},a_{2n+1}\}\mid n\in\omega\}\cup[A\setminus\{a_n\mid n\in\omega\}]^1\in\mathcal{V}_\mathrm{F}$.
Since $f(P)$ is a finite partition, there must be $i,j\in\omega$ with $i\neq j$ such that
$a_{2i}$ and $a_{2j}$ are in the same block of $f(P)$.
The transposition that swaps $a_{2i}$ and $a_{2i+1}$ fixes $P$, and thus also fixes $f(P)$,
which implies that $a_{2i+1}$ and $a_{2j}$ are also in the same block of $f(P)$.
Hence, the transposition that swaps $a_{2i+1}$ and $a_{2j}$ fixes $f(P)$,
but it moves $P$, contradicting that $f$ is injective.

(2) $|\mathscr{B}_{\mathrm{fin}}(A)|\nleqslant|\mathscr{P}(A)|$.
Assume towards a contradiction that in $\mathcal{V}_\mathrm{F}$ there is an injection $g$
from $\scrBf(A)$ into $\mathscr{P}(A)$. Let $C$ be a countable support of~$g$.
Let $a_0,a_1,a_2,a_3$ be four distinct elements of $A\setminus C$.
Consider the finitary partition $P=\{\{a_0,a_1\},\{a_2,a_3\}\}\cup[A\setminus\{a_0,a_1,a_2,a_3\}]^1\in\mathcal{V}_\mathrm{F}$.
Clearly, for any $i,j<4$ with $i\neq j$, there is a $\pi\in\fixg(C)$ such that $\pi(P)=P$ and $\pi(a_i)=a_j$.
Hence, $\{a_0,a_1,a_2,a_3\}\subseteq g(P)$ or $\{a_0,a_1,a_2,a_3\}\subseteq A\setminus g(P)$.
Thus, the transposition that swaps $a_0$ and $a_2$ fixes $g(P)$,
but it moves $P$, contradicting that $g$ is injective.

(3) $|\mathscr{P}(A)|\nleqslant|\mathscr{B}(A)|$.
Assume towards a contradiction that in $\mathcal{V}_\mathrm{F}$ there is an injection $h$
from $\mathscr{P}(A)$ into $\mathscr{B}(A)$. Let $D$ be a countable support of $h$.
Let $C$ be a denumerable subset of $A\setminus D$. Take $x\in C$ and $y\in A\setminus(C\cup D)$.
We claim that $\{x\}\in h(C)$. Assume not; since $h(C)$ is finitary,
we can find a $z\in C$ such that $z\notin[x]_{h(C)}$, and then the transposition that swaps $x$ and $z$
would fix $C$ but move $h(C)$, which is a contradiction. Similarly, $\{y\}\in h(C)$.
Hence, the transposition that swaps $x$ and $y$ fixes $h(C)$,
but it moves $C$, contradicting that $h$ is injective.
\end{proof}

Now the next theorem immediately follows from Fact~\ref{sh12}, Lemma~\ref{sh25}, and the Jech--Sochor theorem.

\begin{theorem}\label{sh26}
It is consistent with $\mathsf{ZF}$ that there exists an infinite set $A$ such that
\begin{enumerate}[label=\upshape(\roman*)]
  \item $|\mathscr{B}(A)|$ and $|\mathrm{Part}_{\mathrm{fin}}(A)|$ are incomparable;
  \item $|\mathscr{B}(A)|$ and $|\mathscr{P}(A)|$ are incomparable;
  \item $|\mathscr{B}_{\mathrm{fin}}(A)|$ and $|\mathscr{P}(A)|$ are incomparable.
\end{enumerate}
\end{theorem}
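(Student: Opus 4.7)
The plan is to combine Lemma~\ref{sh25} with Fact~\ref{sh12} inside the permutation model $\mathcal{V}_\mathrm{F}$ and then invoke the Jech--Sochor theorem to transfer the consistency statement from $\mathsf{ZFA}$ to $\mathsf{ZF}$. Since $A$ is uncountable in $\mathcal{V}_\mathrm{F}$, the hypothesis $|A|\geqslant5$ of Fact~\ref{sh12} is trivially met, so the two injections $\scrBf(A)\preccurlyeq\mathscr{B}(A)$ (the obvious inclusion) and $\mathscr{P}(A)\preccurlyeq\Partf(A)$ (from Fact~\ref{sh12}) are both available in the model.

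Three of the six required non-comparabilities are immediate: Lemma~\ref{sh25}(1) gives $|\mathscr{B}(A)|\nleqslant|\Partf(A)|$, which is half of (i); Lemma~\ref{sh25}(2) gives $|\scrBf(A)|\nleqslant|\mathscr{P}(A)|$, which is half of (iii); and Lemma~\ref{sh25}(3) gives $|\mathscr{P}(A)|\nleqslant|\mathscr{B}(A)|$, which is half of (ii). I would derive the other three halves by short contradictions that each feed one of the three non-inequalities above. For the remaining half of (ii), if $\mathscr{B}(A)\preccurlyeq\mathscr{P}(A)$ held, composing with $\scrBf(A)\preccurlyeq\mathscr{B}(A)$ would contradict Lemma~\ref{sh25}(2). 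For the remaining half of (iii), if $\mathscr{P}(A)\preccurlyeq\scrBf(A)$ held, composing with the same inclusion would contradict Lemma~\ref{sh25}(3). For the remaining half of (i), if $\Partf(A)\preccurlyeq\mathscr{B}(A)$ held, composing with $\mathscr{P}(A)\preccurlyeq\Partf(A)$ from Fact~\ref{sh12} would again contradict Lemma~\ref{sh25}(3).

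With all six non-inequalities verified in $\mathcal{V}_\mathrm{F}$, the final step is a standard application of the Jech--Sochor theorem, which yields a model of $\mathsf{ZF}$ containing an infinite set $A$ that still satisfies (i)--(iii); the relevant statements are bounded in rank above $A$, so no subtlety arises in the transfer. No real obstacle is anticipated: the serious work has already been carried out in Lemma~\ref{sh25} and Fact~\ref{sh12}, and this theorem is essentially a bookkeeping corollary that symmetrises the three one-sided non-embeddability results by composing them with trivial inclusions.
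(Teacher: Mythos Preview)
Your proposal is correct and follows exactly the paper's approach: the paper states that the theorem ``immediately follows from Fact~\ref{sh12}, Lemma~\ref{sh25}, and the Jech--Sochor theorem,'' and you have simply unpacked the six non-inequalities and the transfer step in detail.
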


As easily seen, for infinite well-orderable $A$, $|A|=|\mathrm{fin}(A)|=|\mathscr{B}_{\mathrm{fin}}(A)|$
and $|\mathscr{P}(A)|=|\mathscr{B}(A)|=|\mathrm{Part}_{\mathrm{fin}}(A)|=|\mathrm{Part}(A)|=|\mathscr{P}(A^2)|$.
Therefore, Theorems~\ref{sh18}, \ref{sh24}, and~\ref{sh26} show that Diagram~\ref{sh15} is optimal
in the sense that the $\leqslant$, $<$, or $\neq$ relations between the cardinalities
of these sets not indicted in the diagram cannot be proved in~$\mathsf{ZF}$.
However, we do not know whether $|\mathrm{Part}_{\mathrm{fin}}(A)|<|\mathscr{B}(A)|$
for some infinite set $A$ is consistent with~$\mathsf{ZF}$.

\section{Theorems in $\mathsf{ZF}$}\label{sh00}
In this section, we prove in $\mathsf{ZF}$ some results concerning $\mathscr{B}(A)$,
as well as the inequalities $|\mathscr{B}_{\mathrm{fin}}(A)|<|\mathrm{Part}_{\mathrm{fin}}(A)|$
and $|\mathscr{B}_{\mathrm{fin}}(A)|\neq|\mathscr{P}(A)|$ indicted in Diagram~\ref{sh15}.

Although Cantor's theorem may fail for $\mathscr{B}(A)$,
it does hold under the existence of auxiliary functions.

\begin{definition}
Let $f$ be a function on $A$. An \emph{auxiliary function} for $f$ is a function $g$ defined on $\ran(f)$
such that, for all $z\in\ran(f)$, $g(z)$ is a finitary partition of $f^{-1}[\{z\}]$
with at least one non-singleton block.
\end{definition}

\begin{lemma}\label{sh27}
From a function $f:A\to\mathscr{B}(A)$ and an auxiliary function for~$f$,
one can explicitly define a finitary partition of $A$ not in $\ran(f)$.
\end{lemma}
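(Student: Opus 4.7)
My plan is to build a candidate $Q\in\mathscr{B}(A)$ by prescribing its restriction to each fibre of $f$, and then to rig those restrictions against the corresponding restrictions of $P$ itself, so that the assumption $Q=f(a_0)$ is forced into a contradiction on the fibre $f^{-1}[\{f(a_0)\}]$.

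For each $P\in\ran(f)$, write $R_P$ for the finitary partition of $f^{-1}[\{P\}]$ whose blocks are the non-empty sets $B\cap f^{-1}[\{P\}]$ for $B\in P$, and define
\[
g'(P)=\begin{cases}g(P)&\text{if }g(P)\neq R_P,\\ \{\{x\}:x\in f^{-1}[\{P\}]\}&\text{if }g(P)=R_P.\end{cases}
\]
In the first case $g'(P)\neq R_P$ is the case hypothesis, and in the second case $R_P=g(P)$ carries a non-singleton block by the defining property of an auxiliary function, so $R_P$ cannot coincide with the discrete partition chosen as $g'(P)$. Since the fibres $\{f^{-1}[\{P\}]:P\in\ran(f)\}$ partition $A$ and each $g'(P)$ is a finitary partition of its fibre, the set
\[
Q:=\bigcup_{P\in\ran(f)}g'(P)
\]
is a finitary partition of $A$; both $g'$ and $Q$ are explicitly defined from $f$ and $g$.

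To show $Q\notin\ran(f)$, I would suppose for contradiction $Q=f(a_0)$ for some $a_0\in A$ and set $P_0:=f(a_0)=Q$. By construction every block of $Q$ is contained in exactly one fibre $f^{-1}[\{P\}]$, so the blocks of $P_0=Q$ lying in $f^{-1}[\{P_0\}]$ are precisely the blocks of $g'(P_0)$; equivalently $R_{P_0}=g'(P_0)$. This contradicts the defining property of $g'$, which guarantees $g'(P_0)\neq R_{P_0}$, and completes the argument.

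The main obstacle to anticipate is that the naive candidate $\bigcup_{P\in\ran(f)}g(P)$ can itself lie in $\ran(f)$ (for instance when $|A|=2$ with $f$ constant and $g(P)=P$), so a diagonal twist is needed. The non-singleton block demanded by the definition of an auxiliary function is precisely what supplies room for the twist: it ensures that on any fibre where $g(P)$ would accidentally coincide with $R_P$, swapping in the discrete partition of that fibre genuinely changes the restriction.
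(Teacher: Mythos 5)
Your proof is correct and takes essentially the same approach as the paper: a fibre-by-fibre diagonalization ensuring the constructed partition restricted to each fibre $f^{-1}[\{P\}]$ differs from $P$'s own restriction $R_P$ to that fibre. The only cosmetic difference is the case split --- you test directly whether $g(P)=R_P$, while the paper tests whether $R_P$ is non-discrete --- but both guarantee $g'(P)\neq R_P$ on every fibre and yield the same contradiction.
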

\begin{proof}
Use Cantor's diagonal construction.
Let $f$ be a function from $A$ to $\mathscr{B}(A)$ and let $g$ be an auxiliary function for $f$.
Let $h$ be the function on $\ran(f)$ defined by
\[
h(P)=
\begin{cases}
\{\{z\}\mid f(z)=P\} & \text{if $x\sim_Py$ for some distinct $x,y\in f^{-1}[\{P\}]$,}\\
g(P)                 & \text{otherwise.}
\end{cases}
\]
Then $\bigcup_{P\in\ran(f)}h(P)$ is a finitary partition of $A$ not in $\ran(f)$.
\end{proof}

\subsection{A general result}
We prove a general result which states that, if $\mathscr{B}(A)$ is Dedekind infinite,
then there are no Dedekind finite-to-one functions from $\mathscr{B}(A)$ to $\fin(A)$.

\begin{lemma}\label{sh28}
For any infinite ordinal $\alpha$, one can explicitly define an injection $f:\alpha\times\alpha\to\alpha$.
\end{lemma}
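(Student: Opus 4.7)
The plan is to use Gödel's pairing. Define on $\mathrm{Ord}\times\mathrm{Ord}$ the well-ordering $\prec$ by declaring $(\beta_1,\gamma_1)\prec(\beta_2,\gamma_2)$ iff $\max(\beta_1,\gamma_1)<\max(\beta_2,\gamma_2)$, or the maxima coincide and $\beta_1<\beta_2$, or both coincide and $\gamma_1<\gamma_2$. Transfinite recursion then yields a class function $G$ sending each pair $(\beta,\gamma)$ to the order type of its set of $\prec$-predecessors; $G$ is automatically injective. The heart of the argument is to prove by transfinite induction on infinite cardinals $\kappa$ that $G\rstr(\kappa\times\kappa)$ is a bijection onto $\kappa$. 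The base case $\kappa=\omega$ is a direct check. For the step, given $(\beta,\gamma)\in\kappa\times\kappa$, set $\delta=\max(\beta,\gamma)+1<\kappa$; all $\prec$-predecessors of $(\beta,\gamma)$ lie in $\delta\times\delta$, and the induction hypothesis (together with finite arithmetic when $\delta$ is finite) gives $|\delta\times\delta|=|\delta|<\kappa$, so $G(\beta,\gamma)<\kappa$. Hence $G$ maps $\kappa\times\kappa$ into $\kappa$, and surjectivity follows by a cardinality count.

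To handle a general infinite ordinal $\alpha$, I would define by a second transfinite recursion on $\alpha$ an explicit injection $b_\alpha:\alpha\to\kappa$, where $\kappa=|\alpha|$. If $\alpha=\kappa$ itself, let $b_\alpha$ be the identity. Otherwise, write $\alpha=\kappa+\alpha'$ with $0<\alpha'<\alpha$ (via ordinal subtraction), apply the recursive construction to $\alpha'$ (whose cardinality is at most $\kappa$) to obtain $b_{\alpha'}:\alpha'\to|\alpha'|\subseteq\kappa$, and set $b_\alpha(\xi)=G(0,\xi)$ for $\xi<\kappa$ and $b_\alpha(\kappa+\eta)=G(1,b_{\alpha'}(\eta))$ for $\eta<\alpha'$. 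The values $G(0,\cdot)$ and $G(1,\cdot)$ are disjoint subsets of $\kappa$ by injectivity of $G$, so $b_\alpha$ is well-defined and injective into $\kappa$. Finally, $f(\beta,\gamma)=G(b_\alpha(\beta),b_\alpha(\gamma))$ is an injection from $\alpha\times\alpha$ into $\kappa\subseteq\alpha$, as required.

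I expect the main obstacle to be the bookkeeping for the second recursion: one must verify both that the process is well-founded (which it is, since $\alpha'<\alpha$) and that the resulting assignment $\alpha\mapsto b_\alpha$ is captured by a single formula in the language of set theory, so that the injection $f$ is genuinely given by a uniform definition. Once that is in place, composition with the Gödel pairing $G$ yields the required explicit injection.
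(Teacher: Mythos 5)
The first half of your argument is sound: the Gödel ordering $\prec$, the collapsing function $G$, and the induction on infinite cardinals showing that $G$ maps $\kappa\times\kappa$ bijectively onto $\kappa$ are all correct (using the non-canonical fact $|\delta\times\delta|=|\delta|$ inside that induction is harmless, since only a cardinality bound is needed there). The gap is in the second half, at precisely the point you dismissed as bookkeeping. Ordinal subtraction gives a unique $\alpha'$ with $\kappa+\alpha'=\alpha$, but it does \emph{not} give $\alpha'<\alpha$: one has $\kappa+\alpha=\alpha$ for every $\alpha\geqslant\kappa\cdot\omega$, so for such $\alpha$ the unique solution is $\alpha'=\alpha$. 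Concretely, take $\alpha=\omega^2$, so $\kappa=\omega$: then $\omega+\omega^2=\omega^2$, and your recursion defines $b_{\omega^2}$ in terms of $b_{\omega^2}$. The recursion is therefore not well-founded, and $b_\alpha$ is undefined for all but the rather special ordinals reachable from a cardinal by finitely many subtractions of the current cardinality. Since the cardinal case is comparatively easy, the uniform definable injection $\alpha\to|\alpha|$ is essentially the whole content of the lemma for general $\alpha$, and it is exactly the part that is missing.

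Note also that this is not repaired by switching to another arithmetic decomposition: division ($\alpha=\kappa\cdot\delta+\rho$) and base-$\kappa$ normal forms suffer from the same defect, because each of the maps $\xi\mapsto\kappa+\xi$, $\xi\mapsto\kappa\cdot\xi$, $\xi\mapsto\kappa^{\xi}$ has fixed points of cardinality $\kappa$ below $\kappa^{+}$, so the ``remainder'' need not decrease. Producing an explicitly definable injection of an arbitrary infinite ordinal into its cardinal requires a genuine argument; the paper does not reproduce one but simply cites Specker~\cite{Specker1954} for this lemma, and you would need either to do the same or to supply that construction before your reduction to the cardinal case goes through.
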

\begin{proof}
See~\cite[2.1]{Specker1954}.
\end{proof}

\begin{lemma}\label{sh29}
From an infinite ordinal $\alpha$, one can explicitly define an injection $f:\fin(\alpha)\to\alpha$.
\end{lemma}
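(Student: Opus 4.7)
The plan is to bootstrap from the injection $g:\alpha\times\alpha\to\alpha$ provided by Lemma~\ref{sh28} to an injection on arbitrary finite tuples, and then use the well-ordering of $\alpha$ to turn a finite subset into a canonical finite tuple.

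First I would use $g$ to define, by recursion on $n\in\omega$, an explicit injection $G_n:\alpha^n\to\alpha$: set $G_0(\varnothing)=0$, $G_1(a_0)=a_0$, and
\[
G_{n+1}(a_0,\dots,a_n)=g\bigl(G_n(a_0,\dots,a_{n-1}),\,a_n\bigr).
\]
A straightforward induction on $n$, using that $g$ is an injection, shows that each $G_n$ is itself an injection from $\alpha^n$ into $\alpha$. The sequence $\langle G_n\mid n\in\omega\rangle$ is defined without any arbitrary choices, so the construction is explicit.

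Next, since $\alpha$ is an ordinal, every $S\in\fin(\alpha)$ has a canonical enumeration in strictly increasing order; write this as $e(S)=(a_0,\dots,a_{|S|-1})\in\alpha^{|S|}$. Because $\alpha$ is infinite, $\omega\subseteq\alpha$, so $|S|\in\alpha$. Now define
\[
f(S)=g\bigl(|S|,\,G_{|S|}(e(S))\bigr).
\]
This is a definable function from $\fin(\alpha)$ into $\alpha$. For injectivity, suppose $f(S)=f(T)$; since $g$ is injective we obtain $|S|=|T|$ and $G_{|S|}(e(S))=G_{|S|}(e(T))$, so by injectivity of $G_{|S|}$ we have $e(S)=e(T)$, whence $S=T$.

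There is no serious obstacle here; the only point requiring a little care is the treatment of $n=0$ (handled by setting $G_0(\varnothing)=0$), and the fact that the whole construction depends only on the given $\alpha$ and the injection produced by Lemma~\ref{sh28}, so the resulting $f$ is explicitly defined from $\alpha$ as required.
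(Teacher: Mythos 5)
Your proof is correct. The paper gives no argument of its own for this lemma (it merely cites Halbeisen, Theorem~5.19), so there is nothing to compare line by line; your construction --- iterating the pairing injection of Lemma~\ref{sh28} to get injections $G_n:\alpha^n\to\alpha$, enumerating a finite subset in increasing order via the well-ordering of $\alpha$, and tagging the result with the cardinality $|S|$ so that tuples of different lengths cannot collide --- is a standard, fully explicit, choice-free proof of exactly the statement needed. Two cosmetic points: your clause $G_1(a_0)=a_0$ is redundant with (and harmlessly different from) what the recursion $G_{n+1}(a_0,\dots,a_n)=g(G_n(a_0,\dots,a_{n-1}),a_n)$ would give at $n=0$, so you should state which convention you use; and you implicitly use that $0\in\alpha$ and $\omega\subseteq\alpha$ so that $G_0(\varnothing)$ and $|S|$ land in $\alpha$, which you do note and which is exactly where the hypothesis that $\alpha$ is infinite enters.
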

\begin{proof}
See~\cite[Theorem~5.19]{Halbeisen2017}.
\end{proof}

\begin{lemma}\label{sh30}
From a finite-to-one function $f:\alpha\to A$, where $\alpha$ is an infinite ordinal,
one can explicitly define an injection $g:\alpha\to A$.
\end{lemma}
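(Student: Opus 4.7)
The plan is to convert $f$ into an injection by sorting each fibre and then collapsing the resulting pairs back into $A$ via ordinal arithmetic. For each $\beta<\alpha$, the fibre $f^{-1}[\{f(\beta)\}]$ is a finite subset of the ordinal $\alpha$, hence canonically well-ordered; let $k(\beta)\in\omega$ denote the position of $\beta$ in its increasing enumeration. Then the map $h:\alpha\to\ran(f)\times\omega$ given by $h(\beta)=(f(\beta),k(\beta))$ is manifestly injective.

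Next I would well-order $\ran(f)$ canonically by declaring $a\prec b$ iff $\min(f^{-1}[\{a\}])<\min(f^{-1}[\{b\}])$, which yields a unique order isomorphism $\varphi$ from $\ran(f)$ onto some ordinal $\gamma$. Since $\alpha$ is infinite and every fibre of $f$ is finite, $\ran(f)$ cannot be finite (otherwise $\alpha=\bigcup_{a\in\ran(f)}f^{-1}[\{a\}]$ would be a finite union of finite sets, hence finite), so $\gamma$ is an infinite ordinal; in particular $\omega\leqslant\gamma$. By Lemma~\ref{sh28} there is an explicit injection $\gamma\times\gamma\to\gamma$, and restricting it along the inclusion $\omega\hookrightarrow\gamma$ gives an explicit injection $\iota:\gamma\times\omega\to\gamma$.

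The desired map is then the composition
\[
g=\varphi^{-1}\circ\iota\circ(\varphi\times\mathrm{id}_\omega)\circ h:\alpha\to\ran(f)\subseteq A,
\]
which is injective and defined explicitly from $f$ using only Lemma~\ref{sh28}. The only step that requires any real argument is verifying that $\gamma$ is infinite so that Lemma~\ref{sh28} can be applied; once this is noted, the remainder is routine bookkeeping, and I do not anticipate a genuine obstacle.
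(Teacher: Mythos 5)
Your proof is correct, and since the paper only cites \cite[Lemma~3.3]{Shen2017} for this lemma, your argument supplies essentially the standard proof: encode each $\beta$ by its image together with its position in the finite fibre, canonically well-order $\ran(f)$ by least preimage to get an infinite ordinal $\gamma$, and absorb the $\omega$ factor using the explicit pairing injection of Lemma~\ref{sh28}. All steps are choice-free and explicit, including the key observation that $\ran(f)$ must be infinite.
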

\begin{proof}
See~\cite[Lemma~3.3]{Shen2017}.
\end{proof}

The main idea of the following proof is originally from~\cite[Theorem~3]{HalbeisenShelah1994}.

\begin{lemma}\label{sh31}
From an injection $f:\alpha\to\fin(A)$, where $\alpha$ is an infinite ordinal,
one can explicitly define an injection $h:\alpha\to\fin(A)$ such that
the sets in $\ran(h)$ are pairwise disjoint and contain at least two elements.
\end{lemma}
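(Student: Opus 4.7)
The plan is to use $f$ itself to canonically well-order a quotient of $\bigcup\ran(f)$, so that the construction of $h$ reduces to the well-ordered case and Lemmas~\ref{sh28} and~\ref{sh29} apply directly. Setting $B=\bigcup\ran(f)$, I define an equivalence relation on $B$ by declaring $a\sim_f b$ iff $\{\gamma<\alpha:a\in f(\gamma)\}=\{\gamma<\alpha:b\in f(\gamma)\}$. Each $\sim_f$-class is finite, being contained in any $f(\gamma_0)$ that meets it. On the quotient $B'=B/{\sim_f}$ I then define a binary relation $<_{B'}$ by $[a]<_{B'}[b]$ iff $\min(T_{[a]}\triangle T_{[b]})\in T_{[a]}$, where $T_c:=\{\gamma<\alpha:c\subseteq f(\gamma)\}$; this is well defined and total, since distinct classes have distinct, non-empty witness sets.

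The main obstacle will be verifying that $<_{B'}$ is a well-ordering in $\mathsf{ZF}$. Trichotomy and transitivity reduce to a routine case analysis on the relative positions of $\min(T_{[a]}\triangle T_{[b]})$ and $\min(T_{[b]}\triangle T_{[c]})$. For well-foundedness, given a non-empty $S\subseteq B'$, I let $\gamma_0=\min\bigcup_{c\in S}T_c$; then $S_0:=\{c\in S:\gamma_0\in T_c\}$ is a non-empty collection of $\sim_f$-classes all contained in $f(\gamma_0)$, hence finite, and a direct check confirms that its $<_{B'}$-least element is the minimum of $S$ itself.

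With $<_{B'}$ in hand, I push $f$ through the quotient to an injection $\bar f\colon\alpha\to\fin(B')$ defined by $\bar f(\gamma)=\{[a]:a\in f(\gamma)\}$, which is injective because $f(\gamma)=\bigcup\bar f(\gamma)$ recovers $f$. Writing $\kappa$ for the order type of $(B',<_{B'})$---necessarily an infinite ordinal, since $\alpha$ injects into $\fin(B')$---Lemma~\ref{sh29} applied to $\kappa$ yields, through the order isomorphism $B'\cong\kappa$, an explicit injection $\tilde\phi\colon\fin(B')\to B'$; Lemma~\ref{sh28} likewise provides an injection $\iota\colon B'\times 2\to B'$ via the composite $B'\times 2\hookrightarrow B'\times B'\hookrightarrow B'$. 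Finally, for each $\beta<\alpha$ I set
\[
h(\beta)=\iota\bigl((\tilde\phi\circ\bar f)(\beta),0\bigr)\cup\iota\bigl((\tilde\phi\circ\bar f)(\beta),1\bigr),
\]
where each element of $B'$, being a $\sim_f$-class, is viewed as the corresponding non-empty finite subset of $B\subseteq A$. Injectivity of $\iota$ and of $\tilde\phi\circ\bar f$ forces the four classes occurring in $h(\beta)$ and $h(\beta')$ to be pairwise distinct whenever $\beta\neq\beta'$, and since distinct $\sim_f$-classes are disjoint subsets of $A$, each $h(\beta)$ is a finite subset of $A$ of size at least $2$, the range of $h$ is pairwise disjoint, and $h$ is injective.
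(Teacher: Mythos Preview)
Your proof is correct and shares the same overall architecture as the paper's: define the equivalence $\sim$ on $\bigcup\ran(f)$ by equality of membership profiles, well-order the quotient, push $f$ to an injection $\alpha\to\fin(B')$, and then use Lemmas~\ref{sh28} and~\ref{sh29} to produce for each $\beta$ a pair of distinct equivalence classes whose union is $h(\beta)$.

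Where you differ is in how you obtain the well-ordering of the quotient. The paper defines a function $\Psi$ sending each $x$ to a \emph{finite} subset of $\alpha$ and proves (with a short but not entirely obvious argument) that $\Psi(x)\in\fin(\alpha)$ and that $\Psi$ separates $\sim$-classes; it then well-orders the quotient through the explicit injection $\fin(\alpha)\to\alpha$ of Lemma~\ref{sh29}. You instead work directly with the full (possibly infinite) witness sets $T_{[a]}\subseteq\alpha$ and order them by their first point of disagreement, proving well-foundedness by hand: the key observation that the classes with minimal $\min T_{[a]}$ all sit inside a single $f(\gamma_0)$, hence form a finite set containing the minimum, is exactly what makes this work. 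Your route avoids the $\Psi$-finiteness computation entirely and is arguably more elementary; the paper's route produces a concrete finite invariant for each class, which is what the Halbeisen--Shelah argument was originally designed to do. Both land in the same place for the final step, which is essentially identical to the paper's $h(\beta)=\Psi^{-1}[\{g(s(\beta,0)),g(s(\beta,1))\}]$.
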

\begin{proof}
Let $f$ be an injection from $\alpha$ into $\fin(A)$, where $\alpha$ is an infinite ordinal.
Let $\sim$ be the equivalence relation on $A$ defined by
\[
x\sim y\quad\text{if and only if}\quad\forall\beta<\alpha(x\in f(\beta)\leftrightarrow y\in f(\beta)).
\]
Clearly, for every $x\in\bigcup\ran(f)$, the equivalence class $[x]_\sim$ is finite.

We want to show that there are $\alpha$ many such equivalence classes.
In order to prove this, define a function $\Psi$ on $\bigcup\ran(f)$ by
\[
\Psi(x)=\bigl\{\gamma<\alpha\bigm|x\in f(\gamma)\text{ and }
\textstyle\bigcap\{f(\beta)\mid\beta<\gamma\text{ and }x\in f(\beta)\}\nsubseteq f(\gamma)\bigr\}.
\]

We claim that, for all $x,y\in\bigcup\ran(f)$,
\begin{equation}\label{sh32}
x\sim y\quad\text{if and only if}\quad\Psi(x)=\Psi(y).
\end{equation}
Clearly, if $x\sim y$ then $\Psi(x)=\Psi(y)$.
For the other direction, assume towards a contradiction that $\Psi(x)=\Psi(y)$ but not $x\sim y$.
Let $\delta<\alpha$ be the least ordinal such that $x\in f(\delta)$ is not equivalent to $y\in f(\delta)$.
Without loss of generality, assume that $x\in f(\delta)$ but $y\notin f(\delta)$.
Since $y\notin f(\delta)$, we have $\delta\notin\Psi(y)=\Psi(x)$, which implies that
$\bigcap\{f(\beta)\mid\beta<\delta\text{ and }x\in f(\beta)\}\subseteq f(\delta)$.
Since, for all $\beta<\delta$, $x\in f(\beta)$ if and only if $y\in f(\beta)$,
it follows that $y\in\bigcap\{f(\beta)\mid\beta<\delta\text{ and }x\in f(\beta)\}\subseteq f(\delta)$,
which is a contradiction.

We also claim that, for all $x\in\bigcup\ran(f)$,
\begin{equation}\label{sh33}
\Psi(x)\in\fin(\alpha).
\end{equation}
Let $\xi$ be the least ordinal such that $x\in f(\xi)$.
Then $\xi$ is the first element of $\Psi(x)$.
For all $\gamma,\delta\in\Psi(x)$ with $\gamma<\delta$,
if $f(\xi)\cap f(\gamma)=f(\xi)\cap f(\delta)$, then
$\bigcap\{f(\beta)\mid\beta<\delta\text{ and }x\in f(\beta)\}\subseteq f(\xi)\cap f(\gamma)\subseteq f(\delta)$,
contradicting that $\delta\in\Psi(x)$.
Hence, the function that maps each $\gamma\in\Psi(x)$ to $f(\xi)\cap f(\gamma)$
is an injection from $\Psi(x)$ into $\mathscr{P}(f(\xi))$.
Since $\mathscr{P}(f(\xi))$ is finite, it follows that $\Psi(x)$ is finite.

Now, by Lemma~\ref{sh29}, we can explicitly define an injection $p:\fin(\alpha)\to\alpha$.
By \eqref{sh33}, $\ran(\Psi)\subseteq\fin(\alpha)$.
Let $R$ be the well-ordering of $\ran(\Psi)$ induced by~$p$;
that is, $R=\{(a,b)\mid a,b\in\ran(\Psi)\text{ and }p(a)<p(b)\}$.
Let $\theta$ be the order type of $\langle\ran(\Psi),R\rangle$,
and let $\Theta$ be the unique isomorphism of $\langle\ran(\Psi),R\rangle$ onto $\langle\theta,\in\rangle$.
It is easy to see that $\theta$ is an infinite ordinal.

Again, by Lemma~\ref{sh29}, we can explicitly define an injection $q:\fin(\theta)\to\theta$.
By \eqref{sh32}, the function that maps each $\beta<\alpha$ to $\Psi[f(\beta)]$
is an injection from $\alpha$ into $\fin(\ran(\Psi))$.
Let $g$ be the function on $\alpha$ defined by
\[
g(\beta)=\Theta^{-1}(q(\Theta[\Psi[f(\beta)]])).
\]
$g$ is visualized by the following diagram:
\[
\begin{matrix}
g: & \alpha & \to     & \fin(\ran(\Psi)) & \to     & \fin(\theta)           & \to     & \theta                    & \to     & \ran(\Psi)\\
   & \beta  & \mapsto & \Psi[f(\beta)]   & \mapsto & \Theta[\Psi[f(\beta)]] & \mapsto & q(\Theta[\Psi[f(\beta)]]) & \mapsto & g(\beta).
\end{matrix}
\]
Hence, $g$ is an injection from $\alpha$ into $\ran(\Psi)$.

By Lemma~\ref{sh28}, we can explicitly define an injection $s:\alpha\times\alpha\to\alpha$.
Then the function $h$ on $\alpha$ defined by
\[
h(\beta)=\Psi^{-1}[\{g(s(\beta,0)),g(s(\beta,1))\}]
\]
is the required function.
\end{proof}

\begin{corollary}\label{sh34}
From an injection $f:\alpha\to\fin(A)$, where $\alpha$ is an infinite ordinal,
one can explicitly define a surjection $g:A\twoheadrightarrow\alpha$ and an auxiliary function for $g$.
\end{corollary}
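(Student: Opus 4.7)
The plan is to feed $f$ into Lemma~\ref{sh31} and use the resulting disjointness to build $g$ essentially by projecting onto the indexing ordinal. Concretely, apply Lemma~\ref{sh31} to obtain an injection $h:\alpha\to\fin(A)$ whose range consists of pairwise disjoint finite sets, each of cardinality at least $2$. The sets $h(\beta)$ now serve as ready-made fibers for the surjection we want.

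Define $g:A\to\alpha$ by
\[
g(x)=
\begin{cases}
\beta & \text{if }x\in h(\beta),\\
0     & \text{if }x\notin\bigcup_{\beta<\alpha}h(\beta).
\end{cases}
\]
This is well-defined because the $h(\beta)$ are pairwise disjoint, and it is a surjection because each $h(\beta)$ is nonempty (in fact contains at least two elements), so every $\beta<\alpha$ lies in $\ran(g)$.

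For the auxiliary function, set, for each $\beta<\alpha$,
\[
e(\beta)=\{h(\beta)\}\cup\bigl\{\{x\}\mid x\in g^{-1}[\{\beta\}]\setminus h(\beta)\bigr\}.
\]
For $\beta\neq 0$ we have $g^{-1}[\{\beta\}]=h(\beta)$, so $e(\beta)=\{h(\beta)\}$; for $\beta=0$ the partition $e(0)$ has $h(0)$ as its unique non-singleton block together with singletons of the leftover atoms of $A\setminus\bigcup_{\gamma<\alpha}h(\gamma)$. In either case $e(\beta)$ is a finitary partition of $g^{-1}[\{\beta\}]$ containing the non-singleton block $h(\beta)$ (which has size $\geqslant 2$ by the conclusion of Lemma~\ref{sh31}), so $e$ is an auxiliary function for $g$.

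There is no real obstacle here: all the work has already been paid for in Lemma~\ref{sh31}, where we forced the images to be pairwise disjoint and of size at least $2$ precisely so that they could simultaneously serve as fibers of a surjection onto $\alpha$ and as witnessing non-singleton blocks for the auxiliary function. Both $g$ and $e$ are manifestly defined from $f$ (via $h$) without any arbitrary choices.
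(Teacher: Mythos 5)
Your proof is correct and is essentially identical to the paper's: both feed $f$ into Lemma~\ref{sh31}, define $g$ by sending each element of $h(\beta)$ to $\beta$ and everything else to $0$, and take the auxiliary function whose value at $\beta$ is $\{h(\beta)\}$ together with singletons of any leftover points in the fiber. Your formulation of the auxiliary function via $g^{-1}[\{\beta\}]\setminus h(\beta)$ unfolds to exactly the paper's case split at $\beta=0$.
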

\begin{proof}
Let $f$ be an injection from $\alpha$ into $\fin(A)$, where $\alpha$ is an infinite ordinal.
By Lemma~\ref{sh31}, we can explicitly define an injection $h:\alpha\to\fin(A)$ such that
the sets in $\ran(h)$ are pairwise disjoint and contain at least two elements.
Now, the function $g$ on $A$ defined by
\[
g(x)=
\begin{cases}
\text{the unique }\beta<\alpha\text{ for which }x\in h(\beta) & \text{if $x\in\bigcup\ran(h)$,}\\
0                                                             & \text{otherwise,}
\end{cases}
\]
is a surjection from $A$ onto $\alpha$, and the function $t$ on $\alpha$ defined by
\[
t(\beta)=
\begin{cases}
\{h(0)\}\cup\{\{x\}\mid x\in A\setminus\bigcup\ran(h)\} & \text{if $\beta=0$,}\\
\{h(\beta)\}                                            & \text{otherwise,}
\end{cases}
\]
is an auxiliary function for $g$.
\end{proof}

Now we are ready to prove our main theorem.

\begin{theorem}\label{sh35}
If $\mathscr{B}(A)$ is Dedekind infinite, then there are no Dedekind finite-to-one functions from $\mathscr{B}(A)$ to $\fin(A)$.
\end{theorem}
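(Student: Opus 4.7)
The plan is to derive a contradiction by assuming both the Dedekind finite-to-one map $F:\mathscr{B}(A)\to\fin(A)$ and an injection $\iota:\omega\to\mathscr{B}(A)$ exist, and then constructing by transfinite recursion an injection of the class of all ordinals into $\mathscr{B}(A)$, which is impossible by Hartogs' theorem applied to the set $\mathscr{B}(A)$.

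The recursion is based at $n\in\omega$ by $J(n)=\iota(n)$ and takes unions at limits; the work lies at a successor stage $\beta\geq\omega$. Given an injection $J\rstr\beta:\beta\to\mathscr{B}(A)$, I would first observe that $F\circ(J\rstr\beta):\beta\to\fin(A)$ is in fact finite-to-one: each fibre lies inside the well-ordered ordinal $\beta$ and (via $J\rstr\beta$) injects into the Dedekind finite set $F^{-1}(\{y\})$, so it is simultaneously well-orderable and Dedekind finite, hence finite. Lemma~\ref{sh30} then supplies an injection $\beta\to\fin(A)$, and Corollary~\ref{sh34} converts this into a surjection $g_\beta:A\twoheadrightarrow\beta$ together with an auxiliary function $t_\beta$ for $g_\beta$. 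Composing, I set $f_\beta=(J\rstr\beta)\circ g_\beta:A\to\mathscr{B}(A)$, whose range is exactly $(J\rstr\beta)[\beta]$; since $J\rstr\beta$ is injective, $f_\beta^{-1}(\{Q\})=g_\beta^{-1}(\{(J\rstr\beta)^{-1}(Q)\})$ for each $Q$ in this range, so the map $Q\mapsto t_\beta((J\rstr\beta)^{-1}(Q))$ serves as an auxiliary function for $f_\beta$ in the sense of Lemma~\ref{sh27}. That lemma then produces a finitary partition $P^{*}_{\beta}\in\mathscr{B}(A)\setminus(J\rstr\beta)[\beta]$, which I take as $J(\beta)$, so $J\rstr(\beta+1)$ is again an injection.

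The main obstacle is the upgrade from Dedekind finite to genuinely finite fibres in the first step of the successor; once one notices that the ordinal domain makes the preimages well-orderable, this is immediate, and the remainder of the argument is a careful but essentially routine chaining of Lemma~\ref{sh27}, Lemma~\ref{sh30}, and Corollary~\ref{sh34} inside the recursion, with the injectivity of $J\rstr\beta$ ensuring that the auxiliary function for $g_\beta$ transports cleanly to one for $f_\beta$. Carrying the recursion through all ordinals yields an injection of the proper class of ordinals into the set $\mathscr{B}(A)$, which is the desired contradiction.
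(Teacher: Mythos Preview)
Your argument is correct and mirrors the paper's proof exactly: the same transfinite recursion, the same chain Lemma~\ref{sh30} $\to$ Corollary~\ref{sh34} $\to$ Lemma~\ref{sh27}, and the same key observation that a Dedekind finite subset of an ordinal is finite. One wording caveat: ``takes unions at limits'' is misplaced, since at a limit $\lambda\geq\omega$ you must still produce a value $J(\lambda)\in\mathscr{B}(A)\setminus J[\lambda]$ --- but your successor-stage argument applies verbatim to any infinite $\beta$, exactly as the paper treats it, so nothing is actually missing.
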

\begin{proof}
Assume towards a contradiction that $\mathscr{B}(A)$ is Dedekind infinite
and there is a Dedekind finite-to-one function $\Phi:\mathscr{B}(A)\to\fin(A)$.
Let $h$ be an injection from $\omega$ into $\mathscr{B}(A)$.
In what follows, we get a contradiction by constructing by recursion
an injection $H$ from the proper class of ordinals into the set $\mathscr{B}(A)$.

For $n\in\omega$, take $H(n)=h(n)$.
Now, we assume that $\alpha$ is an infinite ordinal and $H\rstr\alpha$ is an injection from $\alpha$ into $\mathscr{B}(A)$.
Then $\Phi\circ(H\rstr\alpha)$ is a Dedekind finite-to-one function from $\alpha$ to $\fin(A)$.
Since all Dedekind finite subsets of $\alpha$ are finite, $\Phi\circ(H\rstr\alpha)$ is finite-to-one.
By Lemma~\ref{sh30}, $\Phi\circ(H\rstr\alpha)$ explicitly provides an injection $f:\alpha\to\fin(A)$.
Therefore, by Corollary~\ref{sh34}, from $f$, we can explicitly define a surjection $g:A\twoheadrightarrow\alpha$
and an auxiliary function $t$ for $g$. Then $(H\rstr\alpha)\circ g$ is a surjection from $A$ onto $H[\alpha]$
and $t\circ(H\rstr\alpha)^{-1}$ is an auxiliary function for $(H\rstr\alpha)\circ g$.
Hence, it follows from Lemma~\ref{sh27} that we can explicitly define an
$H(\alpha)\in\mathscr{B}(A)\setminus H[\alpha]$ from $H\rstr\alpha$ (and $\Phi$).
\end{proof}

We draw some corollaries from the above theorem.
The next two corollaries immediately follows from Theorem~\ref{sh35} and Facts~\ref{sh04}, \ref{sh05}, and~\ref{sh11}.

\begin{corollary}\label{sh36}
If $\mathscr{B}(A)$ is Dedekind infinite, then there are no Dedekind finite-to-one functions from $\mathscr{B}(A)$ to $\seqi(A)$.
\end{corollary}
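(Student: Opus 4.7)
The plan is to reduce the claim about $\seqi(A)$ to Theorem~\ref{sh35} by composing the hypothetical witness with functions supplied by Facts~\ref{sh04} and~\ref{sh05}. Suppose for contradiction that $\mathscr{B}(A)$ is Dedekind infinite and that there exists a Dedekind finite-to-one function $\Phi\colon\mathscr{B}(A)\to\seqi(A)$. I want to exhibit a Dedekind finite-to-one function from $\mathscr{B}(A)$ into $\fin(A)$, contradicting Theorem~\ref{sh35}.

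First I would invoke Fact~\ref{sh04} to fix an injection $\iota\colon\seqi(A)\to\fin(\fin(A))$; as an injection, $\iota$ is trivially finite-to-one, and therefore Dedekind finite-to-one. Next, Fact~\ref{sh05} supplies a finite-to-one function $\phi\colon\fin(\fin(A))\to\fin(A)$, which likewise is Dedekind finite-to-one. The observation recorded after the definition of finite-to-one functions — that compositions of (Dedekind) finite-to-one functions are (Dedekind) finite-to-one — then yields that
\[
\phi\circ\iota\circ\Phi\colon\mathscr{B}(A)\longrightarrow\fin(A)
\]
is a Dedekind finite-to-one function. Since $\mathscr{B}(A)$ is assumed Dedekind infinite, this directly contradicts Theorem~\ref{sh35}, completing the argument.

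There is essentially no obstacle here: once one notices that the composition of the $\Phi$ we are hypothesizing with the two standard reductions $\seqi(A)\preccurlyeq\fin(\fin(A))$ and $\fin(\fin(A))\to\fin(A)$ preserves the ``Dedekind finite-to-one'' property, the corollary is immediate from Theorem~\ref{sh35}. The only point to double-check is that the chosen $\iota$ and $\phi$ are genuinely Dedekind finite-to-one (not merely finite-to-one), but this is automatic since every finite-to-one function is Dedekind finite-to-one.
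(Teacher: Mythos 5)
Your proposal is correct and matches the paper's intended argument: the paper states that Corollary~\ref{sh36} follows immediately from Theorem~\ref{sh35} together with Facts~\ref{sh04} and~\ref{sh05}, which is precisely the composition $\phi\circ\iota\circ\Phi$ you construct. The observation that an injection and a finite-to-one function are both Dedekind finite-to-one, and that this property is preserved under composition, is exactly the point the paper relies on.
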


\begin{corollary}\label{sh37}
If $\mathscr{B}(A)$ is Dedekind infinite, then there are no Dedekind finite-to-one functions from $\mathscr{B}(A)$ to $\scrBf(A)$,
and thus $|\mathscr{B}_{\mathrm{fin}}(A)|<|\mathscr{B}(A)|$.
\end{corollary}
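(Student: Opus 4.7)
The plan is to deduce both clauses of the corollary from Theorem~\ref{sh35} by transferring a hypothetical Dedekind finite-to-one function $\mathscr{B}(A)\to\scrBf(A)$ into one of the forbidden form $\mathscr{B}(A)\to\fin(A)$, using the bookkeeping facts already assembled in Section~2.

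For the first clause, I would assume $\mathscr{B}(A)$ is Dedekind infinite and, toward a contradiction, suppose there is a Dedekind finite-to-one function $\Phi:\mathscr{B}(A)\to\scrBf(A)$. By Fact~\ref{sh11}, there is an injection $\iota:\scrBf(A)\to\fin(\fin(A))$, which is trivially Dedekind finite-to-one. By Fact~\ref{sh05}, there is a finite-to-one function $\psi:\fin(\fin(A))\to\fin(A)$; since every finite set is Dedekind finite, $\psi$ is also Dedekind finite-to-one. Composing, the function $\psi\circ\iota\circ\Phi:\mathscr{B}(A)\to\fin(A)$ is a composition of Dedekind finite-to-one functions, hence itself Dedekind finite-to-one (as noted after the definition of finite-to-one, citing~\cite[Fact~2.8]{Shen2017}). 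This directly contradicts Theorem~\ref{sh35}.

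For the strict inequality $|\scrBf(A)|<|\mathscr{B}(A)|$, the inclusion $\scrBf(A)\subseteq\mathscr{B}(A)$ provides the $\leqslant$ half. For strict inequality, any bijection between $\mathscr{B}(A)$ and $\scrBf(A)$ would in particular be an injection, hence a Dedekind finite-to-one function from $\mathscr{B}(A)$ to $\scrBf(A)$, which is ruled out by the first clause. Therefore $|\scrBf(A)|\neq|\mathscr{B}(A)|$, yielding the strict inequality.

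No step is really an obstacle here: the whole content is packaged in Theorem~\ref{sh35} and the elementary cardinal-arithmetic facts~\ref{sh05} and~\ref{sh11}. The only point that requires a moment of attention is verifying that composing two Dedekind finite-to-one maps yields a Dedekind finite-to-one map, but this is already recorded in the paper, so the corollary is essentially a two-line deduction.
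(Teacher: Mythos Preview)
Your argument is correct and is precisely the route the paper takes: the paper records that Corollary~\ref{sh37} ``immediately follows from Theorem~\ref{sh35} and Facts~\ref{sh04}, \ref{sh05}, and~\ref{sh11}'', and what you have written is exactly the unpacking of that sentence (Fact~\ref{sh04} being used only for the companion Corollary~\ref{sh36}). Nothing further is needed.
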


\begin{corollary}\label{sh38}
If $\mathscr{B}(A)\neq\scrBf(A)$, then $|\mathscr{B}_{\mathrm{fin}}(A)|<|\mathscr{B}(A)|$.
\end{corollary}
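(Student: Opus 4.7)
The plan is to prove Corollary~\ref{sh38} by splitting into two cases according to whether $\mathscr{B}(A)$ is Dedekind infinite or Dedekind finite, and to note that in each case the hypothesis $\mathscr{B}(A)\neq\scrBf(A)$ immediately yields a strict inequality through already-established machinery.

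First I would observe that $\scrBf(A)\subseteq\mathscr{B}(A)$ holds by definition, so in particular $|\scrBf(A)|\leqslant|\mathscr{B}(A)|$. The hypothesis $\mathscr{B}(A)\neq\scrBf(A)$ therefore guarantees that $\scrBf(A)$ is a \emph{proper} subset of $\mathscr{B}(A)$, i.e.\ there exists some $P\in\mathscr{B}(A)$ with $\ns(P)$ infinite. This proper-subset fact is what drives the argument in the Dedekind-finite branch.

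Now suppose first that $\mathscr{B}(A)$ is Dedekind infinite. Then Corollary~\ref{sh37} applies directly and gives $|\scrBf(A)|<|\mathscr{B}(A)|$, so there is nothing further to prove in this case. Suppose instead that $\mathscr{B}(A)$ is Dedekind finite. Since every subset of a Dedekind finite set is Dedekind finite, $\scrBf(A)$ is Dedekind finite as well, and the standard equivalent characterization of Dedekind infiniteness (recalled in the excerpt immediately after Theorem~\ref{kurt}) tells us that a Dedekind finite set admits no bijection with any proper subset of itself. Applied to the proper inclusion $\scrBf(A)\subsetneq\mathscr{B}(A)$, this yields $|\scrBf(A)|\neq|\mathscr{B}(A)|$, and combined with $|\scrBf(A)|\leqslant|\mathscr{B}(A)|$ we again conclude $|\scrBf(A)|<|\mathscr{B}(A)|$.

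There is essentially no obstacle here: the work has already been done in Theorem~\ref{sh35} (and its Corollary~\ref{sh37}) for the Dedekind-infinite case, while the Dedekind-finite case is a one-line consequence of the definition of Dedekind finiteness together with the hypothesis. The only thing worth double-checking is that the hypothesis $\mathscr{B}(A)\neq\scrBf(A)$ really does give a \emph{proper} inclusion rather than two incomparable sets, which is immediate from $\scrBf(A)\subseteq\mathscr{B}(A)$.
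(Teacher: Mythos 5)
Your proof is correct and uses exactly the same ingredients as the paper's: the proper inclusion $\scrBf(A)\subsetneq\mathscr{B}(A)$, the fact that a bijection with a proper subset forces Dedekind infiniteness, and Corollary~\ref{sh37}. The paper merely packages your two cases as a single proof by contradiction (assuming $|\mathscr{B}(A)|=|\scrBf(A)|$, deducing Dedekind infiniteness, and contradicting Corollary~\ref{sh37}), so the two arguments are the same in substance.
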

\begin{proof}
Suppose $\mathscr{B}(A)\neq\scrBf(A)$. Clearly, $|\mathscr{B}_{\mathrm{fin}}(A)|\leqslant|\mathscr{B}(A)|$.
If $|\mathscr{B}(A)|=|\mathscr{B}_{\mathrm{fin}}(A)|$, since $\scrBf(A)\subset\mathscr{B}(A)$,
it follows that $\mathscr{B}(A)$ is Dedekind infinite, contradicting Corollary~\ref{sh37}.
Hence, $|\mathscr{B}_{\mathrm{fin}}(A)|<|\mathscr{B}(A)|$.
\end{proof}

The following corollary is also proved in~\cite[Theorem~3.7]{PhansamdaengVejjajiva2022}.

\begin{corollary}\label{sh39}
For all infinite sets $A$, $|\mathrm{fin}(A)|<|\mathscr{B}(A)|$.
\end{corollary}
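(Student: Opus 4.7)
My plan is to combine the injection $f\colon\fin(A)\to\scrBf(A)$ from Fact~\ref{sh12} with Theorem~\ref{sh35}. Since $A$ is infinite we have $|A|\geqslant 5$, so Fact~\ref{sh12} together with $\scrBf(A)\subseteq\mathscr{B}(A)$ immediately yields $|\fin(A)|\leqslant|\mathscr{B}(A)|$. For the strict inequality I will assume toward contradiction a bijection $\Phi\colon\mathscr{B}(A)\to\fin(A)$, use it to inflate the non-surjectivity of $f$ to a non-surjective self-injection of $\fin(A)$, and then invoke Theorem~\ref{sh35}.

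The key observation I need is that $\ran(f)\subsetneq\mathscr{B}(A)$. Inspecting the three cases of the piecewise definition of $f$ in the proof of Fact~\ref{sh12} shows that every $f(B)$ has at most two non-singleton blocks. Since $A$ is infinite it contains six distinct elements $a_1,\dots,a_6$, and the partition $\{\{a_1,a_2\},\{a_3,a_4\},\{a_5,a_6\}\}\cup[A\setminus\{a_1,\dots,a_6\}]^1$ has three non-singleton blocks, so it belongs to $\mathscr{B}(A)\setminus\ran(f)$.

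With non-surjectivity in hand, the composition $\Phi\circ f\colon\fin(A)\to\fin(A)$ is an injection whose image $\Phi[\ran(f)]$ is a proper subset of $\Phi[\mathscr{B}(A)]=\fin(A)$. Hence $\fin(A)$ is Dedekind infinite, and therefore $\mathscr{B}(A)$ is also Dedekind infinite via $\Phi$. Theorem~\ref{sh35} then guarantees that there are no Dedekind finite-to-one functions from $\mathscr{B}(A)$ to $\fin(A)$, whereas $\Phi$ itself is a bijection and hence certainly finite-to-one---a contradiction. The main obstacle is the case analysis verifying non-surjectivity of $f$, but this is routine given the explicit formula.
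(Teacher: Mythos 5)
Your proposal is correct and is essentially the paper's own proof: both derive $|\fin(A)|\leqslant|\mathscr{B}(A)|$ from Fact~\ref{sh12}, use the non-surjectivity of that injection to conclude under the contradiction hypothesis that $\mathscr{B}(A)$ is Dedekind infinite, and then contradict Theorem~\ref{sh35}. Your explicit verification that every $f(B)$ has at most two non-singleton blocks (so a partition with three disjoint pairs witnesses non-surjectivity) just fills in a detail the paper leaves implicit.
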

\begin{proof}
By Fact~\ref{sh12}, $|\mathrm{fin}(A)|\leqslant|\mathscr{B}_{\mathrm{fin}}(A)|\leqslant|\mathscr{B}(A)|$.
If $|\mathscr{B}(A)|=|\mathrm{fin}(A)|$, since the injection constructed in the proof of Fact~\ref{sh12} is not surjective,
it follows that $\mathscr{B}(A)$ is Dedekind infinite, contradicting Theorem~\ref{sh35}. Thus, $|\mathrm{fin}(A)|<|\mathscr{B}(A)|$.
\end{proof}

\subsection{$|\mathscr{B}(A)|\neq|\mathrm{seq}(A)|$}
We need the following result, which is a Kuratowski-like theorem for $\mathscr{B}(A)$.

\begin{theorem}\label{sh40}
For all sets $A$, the following are equivalent:
\begin{enumerate}[label=\upshape(\roman*)]
  \item $\mathscr{B}(A)$ is Dedekind infinite;
  \item $\ns(P)$ is power Dedekind infinite for some $P\in\mathscr{B}(A)$;
  \item $\mathscr{P}(\omega)\preccurlyeq\mathscr{B}(A)$.
\end{enumerate}
\end{theorem}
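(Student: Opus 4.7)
The plan is to establish (iii) $\Rightarrow$ (i) $\Rightarrow$ (ii) $\Rightarrow$ (iii). The implication (iii) $\Rightarrow$ (i) is immediate: $\omega \preccurlyeq \mathscr{P}(\omega)$ via $n \mapsto \{n\}$, so composing with the assumed injection $\mathscr{P}(\omega) \hookrightarrow \mathscr{B}(A)$ makes $\mathscr{B}(A)$ Dedekind infinite.

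For (ii) $\Rightarrow$ (iii), I would apply Theorem~\ref{kurt} to $\ns(P)$ to obtain a surjection $\phi\colon Y \twoheadrightarrow \omega$ with $Y \subseteq \ns(P)$, and then, for each $X \subseteq \omega$, set
\[
P_X = (P \setminus \phi^{-1}[X]) \cup \bigl\{\{b\} : b \in \textstyle\bigcup \phi^{-1}[X]\bigr\},
\]
the refinement of $P$ obtained by shattering into singletons exactly those blocks lying in $\phi^{-1}[X]$. Each $P_X \in \mathscr{B}(A)$. The assignment $X \mapsto P_X$ is injective: if $n \in X \triangle X'$, then any $B \in Y$ with $\phi(B) = n$ (which exists by surjectivity) is a non-singleton block of exactly one of $P_X, P_{X'}$ and is shattered in the other, a difference detectable because $|B|\geqslant 2$.

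The main work is (i) $\Rightarrow$ (ii). Given an injection $F\colon \omega \to \mathscr{B}(A)$, my starting observation is that $Q \mapsto \ns(Q)$ is injective on $\mathscr{B}(A)$, because the singleton blocks of any partition are forced by its non-singleton blocks. Hence $n \mapsto \ns(F(n))$ injects $\omega$ into $\mathscr{P}(U)$ where $U := \bigcup_n \ns(F(n))$, so $\mathscr{P}(U)$ is Dedekind infinite and Theorem~\ref{kurt} yields a surjection $\phi\colon W \twoheadrightarrow \omega$ with $W \subseteq U$. I would then stratify $W$ via $\nu(B) = \min\{n : B \in \ns(F(n))\}$ into layers $W_k := \nu^{-1}\{k\} \subseteq \ns(F(k))$, whose members are automatically pairwise disjoint. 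If $\phi[W_k]$ is infinite for some $k$, then $\phi\rstr W_k$ surjects onto an infinite subset of $\omega$, giving $\omega \preccurlyeq^* W_k \subseteq \ns(F(k))$, so $P = F(k)$ works.

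The residual case where every $\phi[W_k]$ is finite is the main obstacle. My intended approach is to form the equivalence $\approx$ on $\bigcup W$ generated by ``belonging to a common member of $W$,'' take $P \in \mathscr{B}(A)$ whose blocks are the $\approx$-classes together with singletons on $A \setminus \bigcup W$, and exploit the minimum map $C \mapsto \min\{\phi(B) : B \in W, B \subseteq C\}$ on the non-singleton classes to witness $\omega \preccurlyeq^* \ns(P)$. The delicate point is that $\approx$ can a priori have infinite classes when blocks from different layers chain together across $A$, in which case $P$ falls out of $\mathscr{B}(A)$. Overcoming this—by thinning $W$ to a sub-family $W' \subseteq W$ that still surjects onto $\omega$ yet induces only finite $\approx$-classes, using the within-layer pairwise disjointness of each $W_k$ to sever fusing chains—will be the crux, after which a cardinality count shows the minimum map has infinite image.
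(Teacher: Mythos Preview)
Your implications (iii)${}\Rightarrow$(i) and (ii)${}\Rightarrow$(iii) are correct and essentially match the paper's (the paper shatters the complementary set of blocks, but this is cosmetic). The issue is (i)${}\Rightarrow$(ii), where you have correctly located the difficulty but not overcome it.

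The proposed ``thinning'' of $W$ to a subfamily $W'$ that still surjects onto $\omega$ while producing only finite $\approx$-classes is the step that does not go through. Within-layer disjointness of the $W_k$ gives you nothing against cross-layer chaining: a point $a\in A$ may lie in one block of $W_k$ for each of infinitely many $k$, and two points of a single block can each branch out to different blocks in other layers, so connected components of the overlap hypergraph can be infinite even after any layer-by-layer thinning you can describe without choice. Nor can you simply pick one block from each $\phi$-fiber or each layer, since the $W_k$ carry no well-ordering. Your final ``cardinality count'' would in fact work once the classes are finite (the map $B\mapsto[B]_\approx$ is then finite-to-one from a power Dedekind infinite $W'$ to $\ns(P)$, and Fact~\ref{sh01} finishes), but the thinning itself is a genuine gap, not a routine detail.

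The paper avoids this obstacle by a different mechanism. It argues by contradiction, assuming every $\ns(P)$ is power Dedekind finite, and builds $\langle Q_n\rangle$ recursively with $\bigcup\ns(Q_n)$ pairwise disjoint. At stage $n$ the accumulated support $B=\bigcup_{i<n}\bigcup\ns(Q_i)$ is power Dedekind finite. If some $P_k$ still has a non-singleton block outside $B$, restrict it. In the hard case every non-singleton block of every $P_k$ meets $B$ and has exactly one point outside $B$; the paper then runs a pigeonhole on the \emph{traces} $\{[x]_{P_{k_m}}\cap B:x\in f(m)\}$, which live in the Dedekind finite set $\mathscr{P}(B^2)$, to find two indices $l_0<l_1$ with identical trace and pairs up their external points into new disjoint doubletons. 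This ``trace repetition in a small set'' idea is the missing ingredient; it manufactures disjoint pairs directly rather than trying to control overlaps among pre-existing blocks.
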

\begin{proof}
(i)${}\Rightarrow{}$(ii).
Suppose that $\mathscr{B}(A)$ is Dedekind infinite.
Assume towards a contradiction that $\ns(P)$ is power Dedekind finite for all $P\in\mathscr{B}(A)$.
Let $\langle P_k\mid k\in\omega\rangle$ be a denumerable family of finitary partitions of $A$.
We define by recursion a sequence $\langle Q_n\mid n\in\omega\rangle$ of finitary partitions of $A$ such that
$\ns(Q_j)\neq\varnothing$ and $\bigcup\ns(Q_i)\cap\bigcup\ns(Q_j)=\varnothing$ whenever $i\neq j$ as follows.

Let $n\in\omega$ and assume that $Q_i$ has already been defined for all $i<n$.
Let $B=\bigcup_{i<n}\bigcup\ns(Q_i)$. By assumption, $\ns(Q_i)$ is power Dedekind finite,
so is $\bigcup\ns(Q_i)$ by Fact~\ref{sh01}. Hence, $B$ is power Dedekind finite. Consider the following two cases:

\textsc{Case}~1.
For a least $k\in\omega$, $x\sim_{P_k}y$ for some distinct $x,y\in A\setminus B$. Then we define
\[
Q_n=\{[z]_{P_k}\setminus B\mid z\in A\setminus B\}\cup\{\{z\}\mid z\in B\}.
\]
It is easy to see that $Q_n$ is a finitary partition of $A$ such that $\ns(Q_n)\neq\varnothing$
and $\bigcup\ns(Q_i)\cap\bigcup\ns(Q_n)=\varnothing$ for all $i<n$.

\textsc{Case}~2.
Otherwise. Then, for all $k\in\omega$ and all $x\in\bigcup\ns(P_k)\setminus B$,
\begin{equation}\label{sh41}
[x]_{P_k}\cap B\neq\varnothing\text{ and }[x]_{P_k}\setminus B=\{x\}.
\end{equation}
Define by recursion a sequence $\langle k_m\mid m\in\omega\rangle$ of natural numbers as follows.

Let $m\in\omega$ and assume that $k_j$ has been defined for $j<m$.
By assumption, $\ns(P_{k_j})$ is power Dedekind finite, so is $\bigcup\ns(P_{k_j})$ by Fact~\ref{sh01}.
Therefore, $B\cup\bigcup_{j<m}\bigcup\ns(P_{k_j})$ is power Dedekind finite.
Hence, there is a $k\in\omega$ such that $\ns(P_k)\nsubseteq\mathscr{P}(B\cup\bigcup_{j<m}\bigcup\ns(P_{k_j}))$.
Define $k_m$ to be the least such $k$. Then
\begin{equation}\label{sh42}
{\textstyle\bigcup}\ns(P_{k_m})\nsubseteq B\cup\bigcup_{j<m}{\textstyle\bigcup}\ns(P_{k_j}).
\end{equation}

Let $f$ and $g$ be the functions on $\omega$ defined by
\begin{align*}
f(m) & ={\textstyle\bigcup}\ns(P_{k_m})\setminus\bigl(B\cup\bigcup_{j<m}{\textstyle\bigcup}\ns(P_{k_j})\bigr)\\
g(m) & =\{[x]_{P_{k_m}}\cap B\mid x\in f(m)\}
\end{align*}
By~\eqref{sh41}, for every $m\in\omega$, $g(m)$ is a finitary partition of a subset of $B$,
and hence $\sim_{g(m)}$ is a subset of $\mathscr{P}(B^2)$.
Since $\mathscr{P}(B^2)$ is Dedekind finite by~Fact~\ref{sh02},
there are least $l_0,l_1\in\omega$ with $l_0<l_1$ such that ${\sim_{g(l_0)}}={\sim_{g(l_1)}}$,
and thus $g(l_0)=g(l_1)$. Let
\[
D=\bigl\{\{x,y\}\bigm|x\in f(l_0),y\in f(l_1)\text{ and }[x]_{P_{k_{l_0}}}\cap B=[y]_{P_{k_{l_1}}}\cap B\bigr\}.
\]
Since $l_0<l_1$, $f(l_0)\cap f(l_1)=\varnothing$, and thus, by~\eqref{sh41},
the sets in $D$ are pairwise disjoint. By~\eqref{sh42}, $f(m)\neq\varnothing$ for all $m\in\omega$,
and since $g(l_0)=g(l_1)$, it follows that $D\neq\varnothing$. Now, we define
\[
Q_n=D\cup\{\{z\}\mid z\in A\setminus\textstyle\bigcup D\}.
\]
Then $Q_n$ is a finitary partition of $A$ such that $\ns(Q_n)=D\neq\varnothing$;
since $B\cap\bigcup D=\varnothing$, it follows that $\bigcup\ns(Q_i)\cap\bigcup\ns(Q_n)=\varnothing$ for all $i<n$.

Finally,
\[
Q=\bigcup_{n\in\omega}\ns(Q_n)\cup\bigl\{\{z\}\bigm|z\in A\setminus\bigcup_{n\in\omega}{\textstyle\bigcup}\ns(Q_n)\bigr\}
\]
is a finitary partition of $A$ such that $\ns(Q)=\bigcup_{n\in\omega}\ns(Q_n)$ is power Dedekind infinite,
which is a contradiction.

(ii)${}\Rightarrow{}$(iii).
Suppose that $P$ is a finitary partition of $A$ such that $\ns(P)$ is power Dedekind infinite.
Let $p$ be a surjection from $\ns(P)$ onto $\omega$. Then the function $h$ on $\mathscr{P}(\omega)$ defined by
\[
h(u)=p^{-1}[u]\cup\{\{z\}\mid z\in A\setminus\textstyle\bigcup p^{-1}[u]\}
\]
is an injection from $\mathscr{P}(\omega)$ into $\mathscr{B}(A)$.

(iii)${}\Rightarrow{}$(i). Obviously.
\end{proof}

\begin{corollary}\label{sh43}
If $\mathscr{B}(A)$ is Dedekind infinite, then there are no Dedekind finite-to-one functions from $\mathscr{B}(A)$ to $\seq(A)$.
\end{corollary}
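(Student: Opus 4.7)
The plan is to reduce, via a case split on whether $A$ is Dedekind finite, to instances of Corollary~\ref{sh36} and Theorem~\ref{sh35} that are already in hand. I will use throughout that the composition of Dedekind finite-to-one functions is again Dedekind finite-to-one (as noted in Section~2).

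If $A$ is Dedekind infinite, Fact~\ref{sh08} supplies a bijection $\seq(A)\approx\seqi(A)$; composing a hypothetical Dedekind finite-to-one function $\mathscr{B}(A)\to\seq(A)$ with this bijection yields a Dedekind finite-to-one function $\mathscr{B}(A)\to\seqi(A)$, which Corollary~\ref{sh36} forbids.

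The substantive case is when $A$ is Dedekind finite. Fact~\ref{sh07} then supplies a Dedekind finite-to-one function $\seq(A)\to\omega$, and composing it with the hypothetical $F\colon\mathscr{B}(A)\to\seq(A)$ yields a Dedekind finite-to-one function $\mathscr{B}(A)\to\omega$. The plan is to repackage this as a Dedekind finite-to-one function $\mathscr{B}(\omega)\to\fin(\omega)$, which Theorem~\ref{sh35} (applied to $A=\omega$) forbids, since $\mathscr{B}(\omega)$ is manifestly Dedekind infinite. To do so, I would prepend the chain of injections $\mathscr{B}(\omega)\preccurlyeq\Part(\omega)\preccurlyeq\mathscr{P}(\omega^2)\approx\mathscr{P}(\omega)$ supplied by Fact~\ref{sh09} and Lemma~\ref{sh28}, together with the injection $\mathscr{P}(\omega)\preccurlyeq\mathscr{B}(A)$ supplied by Theorem~\ref{sh40}; and append the singleton injection $\omega\hookrightarrow\fin(\omega)$, $n\mapsto\{n\}$. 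The resulting composite
\[
\mathscr{B}(\omega)\hookrightarrow\mathscr{P}(\omega)\hookrightarrow\mathscr{B}(A)\to\omega\hookrightarrow\fin(\omega)
\]
is Dedekind finite-to-one, contradicting Theorem~\ref{sh35} at $\omega$.

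The main obstacle is the Dedekind-finite case: unlike $\seqi(A)$, the set $\seq(A)$ admits no finite-to-one reduction to $\fin(A)$ in $\mathsf{ZF}$ when $A$ is Dedekind finite, so one cannot contradict Theorem~\ref{sh35} directly at the level of $A$. The remedy is to exploit Theorem~\ref{sh40}, which places $\mathscr{P}(\omega)$ (and hence, via Fact~\ref{sh09}, a copy of $\mathscr{B}(\omega)$) inside $\mathscr{B}(A)$; this relocates the problem to the universal setting of $\omega$, where Theorem~\ref{sh35} can be invoked.
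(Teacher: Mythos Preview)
Your proof is correct and follows essentially the same approach as the paper: the same case split on whether $A$ is Dedekind infinite, the same use of Fact~\ref{sh08} and Corollary~\ref{sh36} in the first case, and the same key idea in the second case of composing with Fact~\ref{sh07} and then transporting the problem to $\omega$ via the embedding $\mathscr{P}(\omega)\preccurlyeq\mathscr{B}(A)$ from Theorem~\ref{sh40}. The only cosmetic differences are that the paper writes $\mathscr{B}(\omega)\approx\mathscr{P}(\omega)$ directly (rather than your chain through Fact~\ref{sh09} and Lemma~\ref{sh28}) and invokes Corollary~\ref{sh36} at $\omega$ rather than Theorem~\ref{sh35}, which amounts to the same thing.
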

\begin{proof}
Assume towards a contradiction that $\mathscr{B}(A)$ is Dedekind infinite
and there exists a Dedekind finite-to-one function from $\mathscr{B}(A)$ to $\seq(A)$.
If $A$ is Dedekind infinite, then $\seq(A)\approx\seqi(A)$ by Fact~\ref{sh08},
contradicting Corollary~\ref{sh36}. Otherwise, by Fact~\ref{sh07},
there is a Dedekind finite-to-one function from $\seq(A)$ to $\omega$.
By Theorem~\ref{sh40}, $\mathscr{B}(\omega)\approx\mathscr{P}(\omega)\preccurlyeq\mathscr{B}(A)$,
and therefore there is a Dedekind finite-to-one function from $\mathscr{B}(\omega)$ to $\omega$,
contradicting again Corollary~\ref{sh36}.
\end{proof}

\begin{corollary}\label{sh44}
For all non-empty sets $A$, $|\mathscr{B}(A)|\neq|\mathrm{seq}(A)|$.
\end{corollary}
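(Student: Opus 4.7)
The plan is to reduce Corollary~\ref{sh44} to Corollary~\ref{sh43} via Fact~\ref{sh06}. A bijection between $\mathscr{B}(A)$ and $\mathrm{seq}(A)$ would in particular be an injection, and every injection is trivially Dedekind finite-to-one (preimages of singletons are singletons, which are certainly Dedekind finite), so Corollary~\ref{sh43} will forbid such a bijection as soon as we know that $\mathscr{B}(A)$ is Dedekind infinite.

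Here is the sequence of steps I would carry out. First, assume towards a contradiction that $A$ is non-empty and $|\mathscr{B}(A)|=|\mathrm{seq}(A)|$, witnessed by some bijection $F:\mathscr{B}(A)\to\mathrm{seq}(A)$. Second, invoke Fact~\ref{sh06}: since $A\neq\varnothing$, $\mathrm{seq}(A)$ is Dedekind infinite, i.e.\ $\omega\preccurlyeq\mathrm{seq}(A)$. Composing such an injection $\omega\to\mathrm{seq}(A)$ with $F^{-1}$ yields an injection $\omega\to\mathscr{B}(A)$, so $\mathscr{B}(A)$ is Dedekind infinite too. Third, apply Corollary~\ref{sh43}: it asserts that under this hypothesis there is no Dedekind finite-to-one function from $\mathscr{B}(A)$ into $\mathrm{seq}(A)$. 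But $F$ itself is one such function (being injective), contradiction.

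There is no real obstacle here; the entire content is isolating the two ingredients (Fact~\ref{sh06} to promote Dedekind infiniteness across the would-be bijection, and Corollary~\ref{sh43} to close the loop). The only place to be slightly careful is the finite case: if $A$ were finite then $\mathscr{B}(A)$ would be finite while $\mathrm{seq}(A)$ would still be Dedekind infinite, so no bijection can exist for a more elementary reason, but the argument above subsumes this case uniformly, since the step ``$\mathscr{B}(A)$ is Dedekind infinite'' already fails outright in this situation and the contradiction is produced at that point rather than from Corollary~\ref{sh43}. Thus a single uniform three-line proof handles all non-empty $A$.
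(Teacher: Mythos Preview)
Your argument is correct and is exactly the paper's proof: Fact~\ref{sh06} makes $\mathrm{seq}(A)$ Dedekind infinite, the assumed bijection transfers this to $\mathscr{B}(A)$, and Corollary~\ref{sh43} then yields the contradiction. Your closing aside about finite $A$ is unnecessary and slightly misstated---the derivation of ``$\mathscr{B}(A)$ is Dedekind infinite'' does not \emph{fail} there, it succeeds under the hypothesis, and Corollary~\ref{sh43} already delivers the contradiction uniformly with no case split required---but this does not affect the validity of the proof.
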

\begin{proof}
For all non-empty sets $A$, if $|\mathscr{B}(A)|=|\mathrm{seq}(A)|$,
then it follows from Fact~\ref{sh06} that $\mathscr{B}(A)$ is Dedekind infinite,
contradicting Corollary~\ref{sh43}.
\end{proof}

We do not know whether $|\mathscr{B}(A)|\neq|\mathrm{seq}^{\text{1-1}}(A)|$
for all infinite sets $A$ is provable in~$\mathsf{ZF}$.

\subsection{$|A^n|<|\mathscr{B}(A)|$}
The main idea of the following proof is originally from~\cite[Lemma(i)]{Truss1973} (cf.~also~\cite{Shen2023b}).

\begin{lemma}\label{sh45}
For all $n\in\omega$, if $|A|\geqslant 2n(2^{n+1}-1)$, then $|A^n|\leqslant|\mathscr{B}_{\mathrm{fin}}(A)|$.
\end{lemma}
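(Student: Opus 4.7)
The plan is to exhibit an explicit injection $\Phi\colon A^n\hookrightarrow\scrBf(A)$ by fixing a subset $E\subseteq A$ with $|E|=2n(2^{n+1}-1)$ and using its elements as a pool of ``markers''. Each tuple $(a_1,\ldots,a_n)$ will be sent to a finitary partition whose non-singleton blocks are contained in $V\cup E$, where $V=\{a_1,\ldots,a_n\}$; since this is a finite set, the output automatically lies in $\scrBf(A)$.

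The essential data to encode are the set $V$ together with the partition $\{S_v\mid v\in V\}$ of $\{1,\ldots,n\}$ given by $S_v=\{i\mid a_i=v\}$. I would form, for each $v\in V$, one non-singleton block $B_v\subseteq\{v\}\cup E$ whose marker content (and if needed its size) jointly determine both $v$ and $S_v$; every element of $A\setminus\bigcup_vB_v$ becomes a singleton block. To decode, one reads off the finite list of non-singleton blocks, recovers $V$ and each $S_v$, and thereby reconstructs the tuple.

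The main obstacle is that a value $a_i$ may itself lie in $E$, so one cannot simply declare ``$v$ is the unique element of $B_v$ outside $E$''. The bound $|E|=2n(2^{n+1}-1)$ is chosen so that $E$ can be split into $2^{n+1}-1$ ``marker bundles'' of size $2n$, indexed in such a way that the bundle from which $B_v$ draws its markers simultaneously encodes $S_v$ and whether $v\in E$. The slack of $2n\geqslant|V|$ inside each bundle guarantees enough marker candidates to avoid $V$, regardless of the input. The hardest step, and the heart of the argument, is to arrange the size and bundle-assignment conventions so that $v$ can be \emph{uniquely} identified inside $B_v$ no matter which bundles the $a_i$'s happen to fall into; this amounts to a careful case analysis of the possible interactions between $V$ and the bundles, together with the observation that the $S_v$'s are disjoint nonempty subsets of $\{1,\ldots,n\}$. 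Once that combinatorial verification is done, injectivity of $\Phi$ and the membership of its values in $\scrBf(A)$ follow at once.
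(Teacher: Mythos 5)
Your plan correctly identifies the crux of the lemma --- distinguishing a value $a_i$ from the markers when $a_i$ itself lies in the marker pool $E$ --- but it does not resolve it, and with the parameters you fix it is not clear that it can be resolved. Your bundle index is supposed to encode the pair $(S_v,\,v\in E?)$, which already consumes $2(2^n-1)=2^{n+1}-2$ of the $2^{n+1}-1$ available bundles. Now suppose $v\in E$ and, worse, $v$ happens to lie in the very bundle $E_{\beta(v)}$ designated for the pair $(S_v,\mathrm{true})$. Then $B_v\subseteq E_{\beta(v)}$, and nothing in your scheme separates $v$ from the other markers of $B_v$: bundle membership cannot (all of $B_v$ sits in one bundle), and block size cannot either, since encoding the position of $v$ among the $2n$ elements of $E_{\beta(v)}$ by $|M_v|$ may require up to $2n$ markers while only $|E_{\beta(v)}\setminus V|\geqslant n$ are available. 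Reserving a second candidate bundle for each pair $(S,\mathrm{true})$ so as to dodge the bundle containing $v$ would cost $3(2^n-1)$ bundles, which exceeds your budget for $n\geqslant2$. So the ``careful case analysis'' you defer is precisely the missing proof, and the arithmetic of $2n(2^{n+1}-1)$ does not obviously support it under your flat decomposition of $E$.

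The paper's proof escapes this trap with a different, hierarchical structure: the pool is split into $n+1$ levels $H_0,\dots,H_n$ with $|H_i|=2n\cdot2^i$, where $H_i$ contains $2n$ plain markers together with one designated replacement marker $b_{i,x}$ for \emph{every} element $x$ of every lower level. Since $|{\ran(s)}|\leqslant n<n+1$, pigeonhole gives a least level $H_{i_s}$ disjoint from $\ran(s)$; all markers for $s$ are drawn from that level, values of $s$ lying in lower levels are replaced by their designated markers $b_{i_s,x}$, and values lying in higher levels need no treatment because the decoder only has to test membership in $H_{i_s}$. This guarantees by construction that no value of $s$ can collide with the chosen marker level --- exactly the guarantee your single-pool scheme lacks. (The paper also encodes repetitions positionally, pairing $a_{i_s,j}$ with a marker coding $\max\{k<j\mid s(k)=s(j)\}$, rather than encoding the sets $S_v$ by bundle choice, but that difference is cosmetic; the level structure is the essential missing idea.) To repair your argument you would need either to carry out the deferred case analysis in full or to adopt some device, like the paper's levels, that forcibly separates $\ran(s)$ from the markers actually used.
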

\begin{proof}
Let $n\in\omega$ and let $A$ be a set with at least $2n(2^{n+1}-1)$ elements.
Since $2^{i}=1+\sum_{k<i}2^k$, we can choose $2n(2^{n+1}-1)$ distinct elements of $A$ so that they are divided into
$n+1$ sets $H_i$ ($i\leqslant n$) with
\[
H_i=\{a_{i,j}\mid j<2n\}\cup\{b_{i,x}\mid x\in H_k\text{ for some }k<i\}.
\]
We construct an injection $f$ from $A^n$ into $\scrBf(A)$ as follows.
Without loss of generality, assume that $A\cap\omega=\varnothing$.

Let $s\in A^n$. Let $i_s$ be the least $i\leqslant n$ for which $\mathrm{ran}(s)\cap H_i=\varnothing$.
There is such an $i$ because $|\mathrm{ran}(s)|\leqslant n$. Let $t_s$ be the function on $n$ defined by
\[
t_s(j)=
\begin{cases}
s(j)                      & \text{if $s(j)\neq s(k)$ for all $k<j$,}\\
\max\{k<j\mid s(j)=s(k)\} & \text{otherwise.}
\end{cases}
\]
Clearly, $t_s\in\seqi(A\cup n)$. Let $u_s$ be the function on $n$ defined by
\[
u_s(j)=
\begin{cases}
a_{i_s,n+t_s(j)} & \text{if $t_s(j)\in n$,}\\
b_{i_s,t_s(j)}   & \text{if $t_s(j)\in H_k$ for some $k<i_s$,}\\
t_s(j)           & \text{otherwise.}
\end{cases}
\]
Then it is easy to see that $u_s\in\seqi(A)$. Now, we define
\[
f(s)=\bigl\{\{a_{i_s,j},u_s(j)\}\bigm|j<n\bigr\}\cup\bigl\{\{z\}\bigm|z\in A\setminus(\{a_{i_s,j}\mid j<n\}\cup\ran(u_s))\bigr\}.
\]
Clearly, $f(s)\in\scrBf(A)$. We prove that $f$ is injective by showing that $s$ is uniquely determined by $f(s)$ in the following way.

First, $i_s$ is the least $i\leqslant n$ such that $H_i\cap\bigcup\ns(f(s))\neq\varnothing$.
Second, $u_s$ is the function on $n$ such that $\{a_{i_s,j},u_s(j)\}\in f(s)$ for all $j<n$.
Then, $t_s$ is the function on $n$ such that, for every $j<n$, either $t_s(j)$ is the unique
element of $n$ for which $u_s(j)=a_{i_s,n+t_s(j)}$, or $t_s(j)$ is the unique element of
$\bigcup_{k<i_s}H_k$ for which $u_s(j)=b_{i_s,t_s(j)}$, or $t_s(j)=u_s(j)\notin H_{i_s}$.
Finally, $s$ is the function on $n$ recursively determined by
\[
s(j)=
\begin{cases}
t_s(j)    & \text{if $t_s(j)\in A$,}\\
s(t_s(j)) & \text{otherwise.}
\end{cases}
\]

Hence, $f$ is an injection from $A^n$ into $\scrBf(A)$.
\end{proof}

\begin{corollary}\label{sh46}
For all $n\in\omega$ and all infinite sets $A$, $|A^n|<|\mathscr{B}(A)|$.
\end{corollary}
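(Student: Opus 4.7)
The plan is to combine Lemma~\ref{sh45} with the Dedekind-infiniteness trick that was used in the proofs of Corollaries~\ref{sh38} and~\ref{sh39}, and then invoke Corollary~\ref{sh43} to obtain the contradiction that rules out equality.

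First I would dispose of the case $n=0$ directly: $|A^0|=1$ while $|\mathscr{B}(A)|\geqslant 2$ for any infinite $A$, since $\mathscr{B}(A)$ contains the all-singletons partition together with any partition that pairs off two chosen points. So I may assume $n\geqslant 1$. Since $A$ is infinite, the hypothesis $|A|\geqslant 2n(2^{n+1}-1)$ of Lemma~\ref{sh45} is satisfied, and the lemma produces an injection $f:A^n\to\scrBf(A)\subseteq\mathscr{B}(A)$. This already yields $|A^n|\leqslant|\mathscr{B}(A)|$, so only the strictness will require real work.

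The strictness step will go by contradiction. Assuming $|A^n|=|\mathscr{B}(A)|$, I will pick a bijection $F:\mathscr{B}(A)\to A^n$ and form the injection $f\circ F:\mathscr{B}(A)\to\mathscr{B}(A)$. Inspecting the explicit form of $f(s)$ in Lemma~\ref{sh45}, every element of $\ran(f)$ has exactly $n$ non-singleton blocks, each a $2$-element set; hence for $n\geqslant 1$ the all-singletons partition $\{\{a\}\mid a\in A\}$ lies outside $\ran(f\circ F)$. Thus $\mathscr{B}(A)$ is put in bijection with a proper subset of itself and is therefore Dedekind infinite. But then $F$, being injective, is a Dedekind finite-to-one function from $\mathscr{B}(A)$ into $A^n\subseteq\seq(A)$, contradicting Corollary~\ref{sh43}.

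The only point requiring any thought is the verification that $\ran(f)\subsetneq\mathscr{B}(A)$, and this is essentially transparent from the construction: each $f(s)$ has exactly $n$ two-element blocks, so no partition with a different number of non-singleton blocks (in particular the trivial one) can appear. I do not anticipate any further obstacle, since Corollary~\ref{sh43} has already absorbed all the hard work needed here.
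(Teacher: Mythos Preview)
Your proposal is correct and follows essentially the same route as the paper: use Lemma~\ref{sh45} for $|A^n|\leqslant|\mathscr{B}(A)|$, observe that the explicit injection of that lemma is not surjective, so under the assumption $|A^n|=|\mathscr{B}(A)|$ the set $\mathscr{B}(A)$ would be Dedekind infinite, and then invoke Corollary~\ref{sh43} (via $A^n\subseteq\seq(A)$) for the contradiction. Your separate treatment of $n=0$ and your explicit unpacking of why $\ran(f)\subsetneq\mathscr{B}(A)$ are fine but not strictly needed; the paper simply says ``the injection constructed in the proof of Lemma~\ref{sh45} is not surjective'' and proceeds identically.
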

\begin{proof}
By Lemma~\ref{sh45}, $|A^n|\leqslant|\mathscr{B}_{\mathrm{fin}}(A)|\leqslant|\mathscr{B}(A)|$.
If $|\mathscr{B}(A)|=|A^n|$, since the injection constructed in the proof of Lemma~\ref{sh45} is not surjective,
$\mathscr{B}(A)$ is Dedekind infinite, contradicting Corollary~\ref{sh43}. Thus, $|A^n|<|\mathscr{B}(A)|$.
\end{proof}

\begin{lemma}\label{sh47}
If $A$ is Dedekind infinite, then $|\mathrm{seq}(A)|\leqslant|\mathscr{B}_{\mathrm{fin}}(A)|$.
\end{lemma}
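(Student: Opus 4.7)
Since $A$ is Dedekind infinite, fix an injection $\iota\colon\omega\to A$ and write $a_k=\iota(k)$. By Fact~\ref{sh08}, $\seq(A)\approx\seqi(A)$, so it suffices to construct an injection $f\colon\seqi(A)\to\scrBf(A)$.

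Given $s\in\seqi(A)$ with $\dom(s)=n$, let $K_s$ be the least $K\in\omega$ such that $\{a_k\mid k\geqslant K\}\cap\ran(s)=\varnothing$; such a $K$ exists because $\ran(s)$ is finite. Set $c_j=a_{K_s+j}$ for $j<n$ and define
\[
f(s)=\{\{c_j,s(j)\}\mid j<n\}\cup\{\{x\}\mid x\in A\setminus(\{c_j\mid j<n\}\cup\ran(s))\}.
\]
Because $c_j\notin\ran(s)$ (so $c_j\neq s(j)$), the $c_j$'s are mutually distinct, and $s$ is injective, the $n$ sets $\{c_j,s(j)\}$ are genuine two-element blocks that are pairwise disjoint. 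Hence $f(s)$ is a finitary partition of $A$ whose non-singleton part is finite, i.e., $f(s)\in\scrBf(A)$.

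For injectivity, suppose $f(s)=f(s')$. The number of non-singleton blocks is determined by the partition, so $s$ and $s'$ have the same length $n$. Put $U=\bigcup\ns(f(s))=\bigcup\ns(f(s'))$. The indices $k$ for which $a_k\in U$ split into the anchor indices $\{K_s,K_s+1,\ldots,K_s+n-1\}$ together with those indices $k$ with $a_k\in\ran(s)$, which by the minimality of $K_s$ all lie strictly below $K_s$. Hence the $n$ largest elements of $\{k\mid a_k\in U\}$ are exactly $\{K_s,\ldots,K_s+n-1\}$; the same analysis applied to $s'$ forces $K_s=K_{s'}$, and so the anchor sequences coincide. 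The pair of $f(s)$ containing $c_j$ is $\{c_j,s(j)\}$, whose unique second element recovers $s(j)=s'(j)$, so $s=s'$.

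The only real obstacle is deciding, from $f(s)$ alone, which element of each non-singleton block is the anchor and which is the value $s(j)$; the shift by $K_s$ is chosen precisely to guarantee that every anchor index exceeds every index of any $a$-element appearing in $\ran(s)$, making the anchor set canonically recoverable as the top-$n$ block of $\iota$-indices inside $\bigcup\ns(f(s))$. No further cardinal arithmetic is required.
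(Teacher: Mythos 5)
Your proof is correct, but it takes a different route from the paper's. The paper simply reuses the machinery of Lemma~\ref{sh45}: from $h\rstr(2n(2^{n+1}-1))$ it extracts, for each $n$, an injection $f_n:A^n\to\{P\in\mathscr{B}(A)\mid|\ns(P)|=n\}$, and glues these together, using the number of non-singleton blocks to separate the images for different $n$; this handles arbitrary (possibly non-injective) sequences directly via the $t_s$/$u_s$ encoding. You instead invoke Fact~\ref{sh08} to reduce to $\seqi(A)$, which lets you discard the whole apparatus for coding repetitions, and then build a fresh injection by pairing each $s(j)$ with an ``anchor'' $a_{K_s+j}$, where the shift $K_s$ is chosen minimally so that all anchors lie strictly above every $\iota$-index occurring in $\ran(s)$. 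Your decoding argument is sound: the top $n$ indices of $\{k\mid a_k\in\bigcup\ns(f(s))\}$ recover the anchor set canonically, each pair contains exactly one anchor, and injectivity follows (the degenerate cases $n=0$ and $\ran(s)\cap\ran(\iota)=\varnothing$ both check out). What your approach buys is a shorter, self-contained construction needing only $\omega\preccurlyeq A$ rather than the combinatorial set-up of Lemma~\ref{sh45}; what it costs is the extra dependence on Fact~\ref{sh08}, whereas the paper's version also exhibits the images of the $f_n$ as explicitly indexed by $|\ns(P)|$, which it gets for free from the lemma it has already proved.
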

\begin{proof}
Let $h$ be an injection from $\omega$ into $A$.
By the proof of Lemma~\ref{sh45}, from $h\rstr(2n(2^{n+1}-1))$,
we can explicitly define an injection
\[
f_n:A^n\to\{P\in\mathscr{B}(A)\mid|\mathrm{ns}(P)|=n\}.
\]
Then $\bigcup_{n\in\omega}f_n$ is an injection from $\seq(A)$ into $\scrBf(A)$.
\end{proof}

The following corollary immediately follows from Lemma~\ref{sh47} and Corollary~\ref{sh44}.

\begin{corollary}\label{sh48}
For all Dedekind infinite sets $A$, $|\mathrm{seq}(A)|<|\mathscr{B}(A)|$.
\end{corollary}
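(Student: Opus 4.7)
The plan is to combine Lemma~\ref{sh47} with Corollary~\ref{sh44}. Since $A$ is Dedekind infinite, Lemma~\ref{sh47} supplies an injection from $\mathrm{seq}(A)$ into $\scrBf(A)$; composing with the inclusion $\scrBf(A)\subseteq\mathscr{B}(A)$ gives $|\mathrm{seq}(A)|\leqslant|\mathscr{B}(A)|$.

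To upgrade this $\leqslant$ to a strict $<$, I would invoke Corollary~\ref{sh44}, which asserts $|\mathscr{B}(A)|\neq|\mathrm{seq}(A)|$ for every non-empty $A$ (any Dedekind infinite set is, in particular, non-empty). Combined with the previous inequality, this yields $|\mathrm{seq}(A)|<|\mathscr{B}(A)|$, as required.

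There is essentially no obstacle here: both ingredients are already in hand. The only thing to double-check is that Corollary~\ref{sh44}'s hypothesis ``$A$ non-empty'' is indeed weaker than ``$A$ Dedekind infinite,'' which is immediate. The proof therefore reduces to two lines chaining the cited results.
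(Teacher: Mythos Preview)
Your proof is correct and matches the paper's own argument exactly: the paper simply states that the corollary ``immediately follows from Lemma~\ref{sh47} and Corollary~\ref{sh44},'' which is precisely the two-step chain you describe.
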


\subsection{A Cantor-like theorem for $\mathscr{B}(A)$}
Under the assumption that there is a finitary partition of $A$ without singleton blocks,
we show that Cantor's theorem holds for $\mathscr{B}(A)$.
The key step of our proof is the following lemma.

\begin{lemma}\label{sh49}
From a finitary partition $P$ of $A$ without singleton blocks and a surjection $f:A\twoheadrightarrow\alpha$,
where $\alpha$ is an infinite ordinal, one can explicitly define a surjection $g:A\twoheadrightarrow\alpha$
and an auxiliary function for $g$.
\end{lemma}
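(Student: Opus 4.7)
The plan is to explicitly encode each block of $P$ as an ordinal in $\alpha$, yielding a block-constant map $g_0 : A \to \alpha$; then to identify its range with $\alpha$ by Cantor-Bernstein so as to obtain the required surjection $g$; and finally to read off the auxiliary function from the block structure of $P$.

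First, using the explicit injection $c : \fin(\alpha) \to \alpha$ from Lemma~\ref{sh29}, I will define $g_0(x) = c(f[[x]_P])$. This map is constant on each block of $P$, so every nonempty fiber of $g_0$ is the disjoint union of certain whole blocks; since $P$ has no singleton block, this decomposition is automatically a finitary partition of each fiber with a non-singleton block, which is exactly the data needed for an auxiliary function.

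The remaining task is to promote $g_0$ from a surjection onto its range $N = \{c(f[B]) : B \in P\} \subseteq \alpha$ to a surjection onto $\alpha$ itself. I will show $|N| = |\alpha|$ by constructing a finite-to-one map $\iota : \alpha \to N$ via $\iota(\beta) = \min\{c(f[B]) : B \in P,\ \beta \in f[B]\}$ (well defined because $f$ is surjective), applying Lemma~\ref{sh30} to upgrade $\iota$ to an injection $\alpha \to N$, and then Theorem~\ref{cbt} together with the inclusion $N \hookrightarrow \alpha$ to produce an explicit bijection $\tau : N \to \alpha$.

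The composition $g := \tau \circ g_0$ is then the required surjection onto $\alpha$, and the auxiliary function is defined by $t(\beta) = \{B \in P : c(f[B]) = \tau^{-1}(\beta)\}$: by construction this is a nonempty, pairwise disjoint family of finite non-singleton subsets of $A$ whose union is $g^{-1}[\{\beta\}]$, so it is a finitary partition of the fiber with a non-singleton block. The main thing to verify will be that $\iota$ is finite-to-one, which reduces to the injectivity of $c$: if $\iota(\beta) = \gamma$, then $\beta$ must lie in the finite set $c^{-1}(\gamma) \subseteq \alpha$, bounding $\iota^{-1}[\{\gamma\}]$.
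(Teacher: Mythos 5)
Your proposal is correct and follows essentially the same route as the paper's proof: the paper forms $Q=\{f[E]\mid E\in P\}\subseteq\fin(\alpha)$, extracts a finite-to-one map $\alpha\to Q$ by picking the least block-image containing $\beta$ (least in the ordering induced by the injection of Lemma~\ref{sh29}), upgrades it via Lemma~\ref{sh30} and Cantor--Bernstein to a bijection $Q\approx\alpha$, and defines $g$ and $t$ exactly as you do; your version merely transports $Q$ into $\alpha$ along $c$ first, which changes nothing of substance.
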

\begin{proof}
Let $P$ be a finitary partition of $A$ without singleton blocks,
and let $f$ be a surjection from $A$ onto $\alpha$,
where $\alpha$ is an infinite ordinal. Let
\[
Q=\{f[E]\mid E\in P\}.
\]
Clearly, $Q\subseteq\fin(\alpha)$. By Lemma~\ref{sh29},
we can explicitly define an injection $p:\mathrm{fin}(\alpha)\to\alpha$.
Let $R$ be the well-ordering of $Q$ induced by~$p$;
that is, $R=\{(a,b)\mid a,b\in Q\text{ and }p(a)<p(b)\}$.
Since $P$ is a partition of $A$, $Q$ is a cover of $\alpha$.
Define a function $h$ from $\alpha$ to $Q$ by setting, for $\beta\in\alpha$,
\[
h(\beta)=\text{the $R$-least }c\in Q\text{ such that }\beta\in c.
\]
Since $\beta\in h(\beta)$ and $h(\beta)$ is finite for all $\beta\in\alpha$,
$h$ is a finite-to-one function from $\alpha$ to $Q$.
Hence, by Lemma~\ref{sh30}, $h$ explicitly provides an injection from $\alpha$ into $Q$.
Since $p{\upharpoonright}Q$ is an injection from $Q$ into $\alpha$, it follows from
Theorem~\ref{cbt} that we can explicitly define a bijection $q$ between $Q$ and~$\alpha$.

Now, the function $g$ on $A$ defined by
\[
g(x)=q(f[[x]_P])
\]
is a surjection from $A$ onto $\alpha$, and the function $t$ on $\alpha$ defined by
\[
t(\beta)=\{E\in P\mid q(f[E])=\beta\}
\]
is an auxiliary function for $g$.
\end{proof}

\begin{theorem}\label{sh50}
For all infinite sets $A$, if there is a finitary partition of $A$ without singleton blocks,
then there are no surjections from $A$ onto $\mathscr{B}(A)$.
\end{theorem}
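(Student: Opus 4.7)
My plan is to mimic the transfinite recursion used in the proof of Theorem~\ref{sh35}, with Lemma~\ref{sh49} playing the role that Corollary~\ref{sh34} played there. Assume, for contradiction, that $F\colon A\twoheadrightarrow\mathscr{B}(A)$ is a surjection and that $P$ is a finitary partition of $A$ without singleton blocks. I will construct by recursion an injection $H$ from the proper class of ordinals into the set $\mathscr{B}(A)$, contradicting Replacement.

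To start the recursion I first need an explicit injection $h\colon\omega\to\mathscr{B}(A)$. The surjection $F$ gives $\mathscr{P}(\mathscr{B}(A))\preccurlyeq\mathscr{P}(A)$, and since $\mathscr{B}(A)$ is power Dedekind infinite by Corollary~\ref{sh13}, so is $A$. The block map $\iota\colon A\to P$, $\iota(x)=[x]_P$, is finite-to-one, so by the contrapositive of Fact~\ref{sh01}, $P$ itself is power Dedekind infinite. Since $P$ has no singleton blocks, $\ns(P)=P$, so the implication (ii)$\Rightarrow$(iii) of Theorem~\ref{sh40} produces an explicit injection $\mathscr{P}(\omega)\to\mathscr{B}(A)$; composing with $n\mapsto\{n\}$ supplies the required $h$.

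Set $H(n)=h(n)$ for $n<\omega$. Now suppose $\alpha$ is an infinite ordinal and $H\rstr\alpha\colon\alpha\to\mathscr{B}(A)$ is already defined and injective. Define a surjection $\phi\colon A\twoheadrightarrow\alpha$ by
\[
\phi(x)=
\begin{cases}
(H\rstr\alpha)^{-1}(F(x)) & \text{if }F(x)\in H[\alpha],\\
0 & \text{otherwise,}
\end{cases}
\]
which is onto because $F$ is. Apply Lemma~\ref{sh49} to $P$ and $\phi$ to obtain a surjection $g\colon A\twoheadrightarrow\alpha$ together with an auxiliary function $t$ for $g$. Then $f=(H\rstr\alpha)\circ g$ is a function from $A$ onto $H[\alpha]\subseteq\mathscr{B}(A)$, and since $f^{-1}[\{Q\}]=g^{-1}[\{(H\rstr\alpha)^{-1}(Q)\}]$ for each $Q\in H[\alpha]$, the map $Q\mapsto t((H\rstr\alpha)^{-1}(Q))$ is an auxiliary function for $f$. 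Lemma~\ref{sh27} then explicitly supplies a finitary partition $Q^*\in\mathscr{B}(A)\setminus H[\alpha]$, and we set $H(\alpha)=Q^*$.

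I expect the main subtlety to be the preliminary step: combining the existence of $F$ with the hypothesis on $P$ to force $\mathscr{B}(A)$ to be Dedekind infinite so that $H\rstr\omega$ can be explicitly exhibited. Once that is in hand, everything afterward is bookkeeping that, at each stage, fits the pair $(F,P)$ into the mould of Lemma~\ref{sh49} and then invokes the diagonal construction of Lemma~\ref{sh27} to produce a genuinely new element of $\mathscr{B}(A)$.
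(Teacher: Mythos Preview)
Your proof is correct and follows essentially the same route as the paper's: first argue that $A$, and hence $\ns(P)=P$, is power Dedekind infinite, invoke Theorem~\ref{sh40} to make $\mathscr{B}(A)$ Dedekind infinite and obtain $h$, and then run the transfinite recursion using Lemma~\ref{sh49} together with Lemma~\ref{sh27} at each infinite stage. One nitpick: your appeal to Fact~\ref{sh01} is a direct application, not a contrapositive.
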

\begin{proof}
Let $A$ be an infinite set and let $P$ be a finitary partition of $A$ without singleton blocks.
Assume towards a contradiction that there is a surjection $\Phi:A\twoheadrightarrow\mathscr{B}(A)$.
By Corollary~\ref{sh13}, $\mathscr{B}(A)$ is power Dedekind infinite, so is~$A$.
Since $\bigcup\ns(P)=A$ is power Dedekind infinite, so is $\ns(P)$ by Fact~\ref{sh01}.
Hence, by Theorem~\ref{sh40}, $\mathscr{B}(A)$ is Dedekind infinite.
Let $h$ be an injection from $\omega$ into $\mathscr{B}(A)$.
In what follows, we get a contradiction by constructing by recursion
an injection $H$ from the proper class of ordinals into $\mathscr{B}(A)$.

For $n\in\omega$, take $H(n)=h(n)$.
Now, we assume that $\alpha$ is an infinite ordinal and $H\rstr\alpha$ is an injection from $\alpha$ into $\mathscr{B}(A)$.
Then $(H\rstr\alpha)^{-1}\circ\Phi$ is a surjection from a subset of $A$ onto $\alpha$
and thus can be extended by zero to a surjection $f:A\twoheadrightarrow\alpha$.
By Lemma~\ref{sh49}, from $P$ and $f$, we can explicitly define a surjection $g:A\twoheadrightarrow\alpha$
and an auxiliary function $t$ for $g$. Then $(H\rstr\alpha)\circ g$ is a surjection from $A$ onto $H[\alpha]$
and $t\circ(H\rstr\alpha)^{-1}$ is an auxiliary function for $(H\rstr\alpha)\circ g$.
Hence, it follows from Lemma~\ref{sh27} that we can explicitly define an
$H(\alpha)\in\mathscr{B}(A)\setminus H[\alpha]$ from $H\rstr\alpha$ (and $P,\Phi$).
\end{proof}

Under the same assumption, we also show that there are no finite-to-one functions from $\mathscr{B}(A)$ to $A^n$.

\begin{theorem}\label{sh51}
For all infinite sets $A$, if there is a finitary partition of~$A$ without singleton blocks,
then there are no finite-to-one functions from $\mathscr{B}(A)$ to $A^n$ for every $n\in\omega$.
\end{theorem}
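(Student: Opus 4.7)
The plan is to reduce the theorem to Corollary~\ref{sh43}, which forbids Dedekind finite-to-one functions from $\mathscr{B}(A)$ to $\seq(A)$ whenever $\mathscr{B}(A)$ is Dedekind infinite. Since $A^n\subseteq\seq(A)$ and every finite-to-one function is automatically Dedekind finite-to-one (finite sets being Dedekind finite), the theorem will follow once we verify that $\mathscr{B}(A)$ is Dedekind infinite under the given hypotheses. So the whole task reduces to establishing this Dedekind infinitude.

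Assume toward a contradiction that $P$ is a finitary partition of $A$ without singleton blocks and that $\Phi\colon\mathscr{B}(A)\to A^n$ is finite-to-one. By Corollary~\ref{sh13}, $\mathscr{B}(A)$ is power Dedekind infinite, so applying Fact~\ref{sh01} to $\Phi$ shows that $A^n$ is power Dedekind infinite, and then Fact~\ref{sh02} gives that $A$ itself is power Dedekind infinite. Now the quotient map $x\mapsto[x]_P$ is a finite-to-one surjection from $A$ onto $P$ (each block is finite, and $P$ covers $A$), so another application of Fact~\ref{sh01} yields that $P$ is power Dedekind infinite. Because $P$ has no singleton blocks, $\ns(P)=P$, and hence $\ns(P)$ is power Dedekind infinite.

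By the implication (ii)$\Rightarrow$(i) of Theorem~\ref{sh40}, $\mathscr{B}(A)$ is then Dedekind infinite. Viewing $\Phi$ as a Dedekind finite-to-one function from $\mathscr{B}(A)$ into $\seq(A)$ via the inclusion $A^n\subseteq\seq(A)$ immediately contradicts Corollary~\ref{sh43}, completing the argument. The only load-bearing step is the promotion of the two hypotheses to the statement ``$\ns(P)$ is power Dedekind infinite'', which is precisely what makes Theorem~\ref{sh40} applicable; unlike the proof of Theorem~\ref{sh50}, no separate diagonal construction of an injection from the proper class of ordinals into $\mathscr{B}(A)$ appears necessary once these ingredients are aligned.
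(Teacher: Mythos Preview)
Your proof is correct and follows essentially the same route as the paper's own argument: use Corollary~\ref{sh13}, Facts~\ref{sh01} and~\ref{sh02} to get that $A$ (hence $\ns(P)=P$) is power Dedekind infinite, invoke Theorem~\ref{sh40} to conclude $\mathscr{B}(A)$ is Dedekind infinite, and then contradict Corollary~\ref{sh43}. Your presentation is slightly more explicit about the quotient map $x\mapsto[x]_P$ and the inclusion $A^n\subseteq\seq(A)$, but the structure is identical.
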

\begin{proof}
Let $A$ be an infinite set and let $P$ be a finitary partition of $A$ without singleton blocks.
Assume towards a contradiction that there is a finite-to-one function from $\mathscr{B}(A)$ to $A^n$ for some $n\in\omega$.
By Corollary~\ref{sh13}, $\mathscr{B}(A)$ is power Dedekind infinite, so is $A$ by Facts~\ref{sh01} and~\ref{sh02}.
Since $\bigcup\ns(P)=A$ is power Dedekind infinite, so is $\ns(P)$ by Fact~\ref{sh01}.
Hence, by Theorem~\ref{sh40}, $\mathscr{B}(A)$ is Dedekind infinite, contradicting Corollary~\ref{sh43}.
\end{proof}

\subsection{The inequalities $|\mathscr{B}_{\mathrm{fin}}(A)|<|\mathrm{Part}_{\mathrm{fin}}(A)|$
and $|\mathscr{B}_{\mathrm{fin}}(A)|\neq|\mathscr{P}(A)|$}

\begin{lemma}\label{sh54}
If $A$ is power Dedekind infinite, then there are no finite-to-one functions from $\mathscr{P}(A)$ to $\fin(A)$.
\end{lemma}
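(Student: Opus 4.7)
The plan is to argue by contradiction, constructing a proper-class-indexed injection $H$ from the ordinals into the set $\mathscr{P}(A)$, following the same blueprint as the proof of Theorem~\ref{sh35}: Cantor's diagonal will play the role of Lemma~\ref{sh27}, and the hypothesis that $A$ is power Dedekind infinite will replace the Dedekind infiniteness of $\mathscr{B}(A)$. So suppose $\Phi:\mathscr{P}(A)\to\fin(A)$ is finite-to-one; since $A$ is power Dedekind infinite, $\mathscr{P}(A)$ is by definition Dedekind infinite, so fix an injection $h:\omega\to\mathscr{P}(A)$ and set $H(n)=h(n)$ for $n\in\omega$.

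The heart of the argument is the recursion step. Assume $\alpha$ is an infinite ordinal and $H\rstr\alpha$ is an injection from $\alpha$ into $\mathscr{P}(A)$; I need to explicitly define $H(\alpha)\in\mathscr{P}(A)\setminus H[\alpha]$. The composition $\Phi\circ(H\rstr\alpha)$ is a finite-to-one map from $\alpha$ to $\fin(A)$, so Lemma~\ref{sh30} explicitly provides an injection $f:\alpha\to\fin(A)$. Applying Lemma~\ref{sh31} to $f$ yields an injection $h':\alpha\to\fin(A)$ whose images are pairwise disjoint and each of size at least two. Define $g:A\to\alpha$ by sending $x$ to the unique $\beta<\alpha$ with $x\in h'(\beta)$ when such a $\beta$ exists, and to $0$ otherwise; since each $h'(\beta)$ is non-empty, $g$ is a surjection from $A$ onto $\alpha$.

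Now apply Cantor's diagonal construction: set
\[
H(\alpha)=\{x\in A\mid x\notin H(g(x))\}.
\]
If $H(\alpha)=H(\beta)$ for some $\beta<\alpha$, pick any $x\in h'(\beta)$, so that $g(x)=\beta$, and observe $x\in H(\alpha)\Leftrightarrow x\notin H(\beta)=H(\alpha)$, a contradiction. Hence $H(\alpha)\notin H[\alpha]$, and the recursion continues. By transfinite recursion we thus obtain an injection from the proper class of ordinals into the set $\mathscr{P}(A)$, which is absurd.

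The only point requiring care is that the whole recursion be uniformly definable from $\Phi$, $h$, and $\alpha$, so that $H$ really is a class function; this is exactly what the ``explicitly define'' clauses in Lemmas~\ref{sh30} and~\ref{sh31} are designed to provide. Beyond this bookkeeping, no new obstacle arises compared with the proof of Theorem~\ref{sh35}.
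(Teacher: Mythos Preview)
Your argument is correct. The paper does not prove this lemma at all; it merely cites \cite[Corollary~3.7]{Shen2017}. Your proof is therefore genuinely new content relative to the paper, and it has the virtue of being entirely self-contained within the paper's existing machinery: you re-run the recursive blueprint of Theorem~\ref{sh35}, invoking Lemmas~\ref{sh30} and~\ref{sh31} to produce the surjection $g:A\twoheadrightarrow\alpha$, and then replace the auxiliary-function diagonal of Lemma~\ref{sh27} by Cantor's classical diagonal $\{x\in A\mid x\notin H(g(x))\}$, which is exactly what one expects when the target is $\mathscr{P}(A)$ rather than $\mathscr{B}(A)$. The simplification is real---no auxiliary function is needed, and the ``at least two elements'' clause of Lemma~\ref{sh31} is not even used (any nonempty pairwise disjoint family would do)---so in fact your argument shows that the lemma sits strictly below Theorem~\ref{sh35} in difficulty. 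The bookkeeping point you flag (uniform definability of the recursion) is handled exactly as in Theorem~\ref{sh35}.
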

\begin{proof}
See~\cite[Corollary~3.7]{Shen2017}.
\end{proof}

The next corollary immediately follows from Lemma~\ref{sh54} and Facts~\ref{sh05} and~\ref{sh11}.

\begin{corollary}\label{sh55}
If $A$ is power Dedekind infinite, then there are no finite-to-one functions from $\mathscr{P}(A)$ to $\scrBf(A)$.
\end{corollary}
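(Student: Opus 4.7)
The plan is to argue by contradiction using a short chain of compositions. Suppose $A$ is power Dedekind infinite and, contrary to the claim, there exists a finite-to-one function $f:\mathscr{P}(A)\to\scrBf(A)$. The goal is to manufacture from $f$ a finite-to-one function $\mathscr{P}(A)\to\fin(A)$, which is forbidden by Lemma~\ref{sh54}.

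First, I would invoke Fact~\ref{sh11}, which provides an injection $\iota:\scrBf(A)\to\fin(\fin(A))$; since any injection is trivially finite-to-one, the composition $\iota\circ f:\mathscr{P}(A)\to\fin(\fin(A))$ is finite-to-one (the paper explicitly notes, just after Kuratowski's theorem, that the composition of two finite-to-one functions is finite-to-one). Next, I would apply Fact~\ref{sh05} to obtain a finite-to-one function $\varphi:\fin(\fin(A))\to\fin(A)$. Composing once more yields $\varphi\circ\iota\circ f:\mathscr{P}(A)\to\fin(A)$, which is finite-to-one by the same closure property.

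Finally, I would conclude by quoting Lemma~\ref{sh54}: since $A$ is power Dedekind infinite, no such finite-to-one function $\mathscr{P}(A)\to\fin(A)$ can exist, contradicting the hypothesized $f$. Hence there are no finite-to-one functions from $\mathscr{P}(A)$ to $\scrBf(A)$.

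There is no serious obstacle here; the argument is a routine two-step composition. The only thing to keep straight is the distinction between finite-to-one and Dedekind finite-to-one, but Lemma~\ref{sh54} is stated for ordinary finite-to-one maps, matching the hypothesis and the composition produced above, so no refinement is required.
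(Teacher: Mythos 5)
Your argument is correct and is exactly the route the paper intends: the corollary is stated to follow immediately from Lemma~\ref{sh54} together with Facts~\ref{sh05} and~\ref{sh11}, and your composition $\varphi\circ\iota\circ f$ is just the explicit spelling-out of that. Nothing to add.
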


\begin{theorem}\label{sh56}
For all infinite sets $A$, $|\mathscr{B}_{\mathrm{fin}}(A)|<|\mathrm{Part}_{\mathrm{fin}}(A)|$.
\end{theorem}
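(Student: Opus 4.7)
The plan is as follows. The inequality $|\scrBf(A)|\leqslant|\Partf(A)|$ is already delivered by Fact~\ref{sh14}, so what remains is to rule out equality. I would argue by splitting into two cases according to whether $A$ is power Dedekind infinite.

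Suppose first that $A$ is power Dedekind infinite. Assume toward contradiction that $|\scrBf(A)|=|\Partf(A)|$. By Fact~\ref{sh12} (applicable since $A$ being infinite forces $|A|\geqslant 5$) we have $\mathscr{P}(A)\preccurlyeq\Partf(A)$, so composing with the hypothetical bijection yields an injection $\mathscr{P}(A)\preccurlyeq\scrBf(A)$. Since every injection is \emph{a fortiori} finite-to-one, this contradicts Corollary~\ref{sh55}.

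Next suppose $A$ is power Dedekind finite. Then Corollary~\ref{sh10} makes both $\scrBf(A)$ and $\Partf(A)$ Dedekind finite, and I would show directly that the canonical injection $f\colon P\mapsto\ns(P)\cup\{\bigcup(P\setminus\ns(P))\}$ of Fact~\ref{sh14} fails to be surjective by exhibiting an explicit $Q\in\Partf(A)\setminus\ran(f)$. Picking distinct $a,b\in A$ (available since $A$ is infinite) and setting $Q=\{\{a\},\{b\},A\setminus\{a,b\}\}$, every block of $f(P)$ other than the ``extra'' block $\bigcup(P\setminus\ns(P))$ is a finite set of size at least two; but none of the three blocks of $Q$ is such a set (two are singletons and the third is infinite). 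So all three blocks of $Q$ would have to equal that single extra block, which is absurd.

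It then follows in the Dedekind finite setting that $|\scrBf(A)|<|\Partf(A)|$: any hypothetical bijection $g\colon\scrBf(A)\to\Partf(A)$ would make $g^{-1}\circ f$ an injection of $\scrBf(A)$ into a proper subset of itself, contradicting its Dedekind finiteness. The main obstacle is the power Dedekind finite case, where the usual Cantor-style diagonal tricks are unavailable; the key observation is that the Dedekind finiteness supplied by Corollary~\ref{sh10} lets one upgrade the concrete non-surjectivity of the injection of Fact~\ref{sh14} into the strict cardinality inequality.
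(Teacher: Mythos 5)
Your proof is correct and uses exactly the same ingredients as the paper's (Fact~\ref{sh14}, the non-surjectivity of its injection, Corollary~\ref{sh10}, Fact~\ref{sh12}, and Corollary~\ref{sh55}); the only difference is organizational, in that you case-split on whether $A$ is power Dedekind infinite, whereas the paper assumes equality and uses the non-surjective injection together with the hypothetical bijection to show $\Partf(A)$ is Dedekind infinite, forcing $A$ to be power Dedekind infinite and collapsing both of your cases into one.
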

\begin{proof}
Let $A$ be an infinite set. By Fact~\ref{sh14},
$|\mathscr{B}_{\mathrm{fin}}(A)|\leqslant|\mathrm{Part}_{\mathrm{fin}}(A)|$.
Assume $|\mathrm{Part}_{\mathrm{fin}}(A)|=|\mathscr{B}_{\mathrm{fin}}(A)|$.
Since the injection constructed in the proof of Fact~\ref{sh14} is not surjective,
$\Partf(A)$ is Dedekind infinite, and thus $A$ is power Dedekind infinite by Corollary~\ref{sh10}.
Now, by Fact~\ref{sh12}, $|\mathscr{P}(A)|\leqslant|\mathrm{Part}_{\mathrm{fin}}(A)|=|\mathscr{B}_{\mathrm{fin}}(A)|$,
contradicting Corollary~\ref{sh55}. Hence, $|\mathscr{B}_{\mathrm{fin}}(A)|<|\mathrm{Part}_{\mathrm{fin}}(A)|$.
\end{proof}

Finally, we prove $|\mathscr{B}_{\mathrm{fin}}(A)|\neq|\mathscr{P}(A)|$.
For this, we need the following number-theoretic lemma.

\begin{lemma}\label{sh52}
For each $n\in\omega$, let $B_n$ be the $n$-th Bell number; that is, $B_n=|\mathscr{B}(n)|$.
Then $B_m$ is not a power of $2$ for all $m\geqslant3$.
\end{lemma}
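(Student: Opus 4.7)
The plan is to combine Touchard's congruence modulo $2$ with an analysis modulo $8$. Recall Touchard's congruence: for every prime $p$, $B_{n+p}\equiv B_n+B_{n+1}\pmod p$. Taking $p=2$ together with $B_0=B_1=1$, we find that $(B_n\bmod 2)_{n\in\omega}=1,1,0,1,1,0,\ldots$ is periodic of period $3$; hence $B_m$ is odd exactly when $m\not\equiv 2\pmod 3$. For $m\geq 3$ with $m\not\equiv 2\pmod 3$, $B_m$ is odd and $B_m\geq B_3=5>1$, so $B_m$ is not a power of $2$.

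It remains to handle the case $m\equiv 2\pmod 3$ with $m\geq 3$. Here my plan is to show that $v_2(B_m)\leq 2$; combined with $B_m\geq B_5=52>4$ for such $m$, this forces $B_m$ not to be a power of $2$. The key claim is that $B_n\bmod 8\in\{2,4\}$ for every $n\equiv 2\pmod 3$. I would prove this by establishing periodicity of $(B_n\bmod 8)_{n\in\omega}$ and then verifying the claim by tabulating $B_n\bmod 8$ over one full period; direct computation of $B_0,\ldots,B_{23}\bmod 8$ suggests that the period is $24$ and that the nonzero residues visited at positions $n\equiv 2\pmod 3$ are only $2$ and $4$.

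The main obstacle is justifying periodicity of $(B_n\bmod 8)_{n\in\omega}$ in a self-contained way. This is subtler than the corresponding mod-$2$ result: unlike for $p=2$, one cannot simply read off a three-term linear recurrence mod $8$, since a direct check shows that no coefficients $a,b,c\in\mathbb{Z}/8\mathbb{Z}$ satisfy $B_{n+3}\equiv aB_n+bB_{n+1}+cB_{n+2}\pmod{8}$ for all $n$. A natural route is to iterate Touchard's polynomial congruence $B_{n+p}(x)\equiv xB_n(x)+B_{n+1}(x)\pmod p$ at $p=2$ enough times to derive a Touchard-type identity modulo $8$, which then yields a linear recurrence of bounded length for $(B_n\bmod 8)_{n\in\omega}$ and hence eventual periodicity. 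Once periodicity is secured, the lemma reduces to the finite verification described above.
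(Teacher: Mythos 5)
Your overall strategy coincides with the paper's: reduce the lemma to showing that $B_m$ is never divisible by $8$ (in your two-case version, that $B_n\bmod 8\in\{2,4\}$ whenever $B_n$ is even), and verify this by a finite check over one period of the sequence $(B_n\bmod 8)_{n\in\omega}$, whose period is $24$. Your finite verification is correct: the residues of $B_0,\dots,B_{23}$ modulo $8$ are $1,1,2,5,7,4,3,5,4,3,7,2,5,5,2,1,3,4,7,1,4,7,3,2$, none divisible by $8$, and a power of $2$ exceeding $4$ must be divisible by $8$ while $B_m>4$ for $m\geqslant3$. (Your preliminary mod-$2$ case split via Touchard's congruence is harmless but redundant: the mod-$8$ table already disposes of the odd case in one stroke.)

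The genuine gap is exactly where you locate it: the periodicity of $(B_n\bmod 8)_{n\in\omega}$. The paper does not prove this either; it cites Theorem~6.4 of Lunnon, Pleasants and Stephens for $B_{n+24}\equiv B_n\pmod 8$. Your proposed route for supplying a self-contained proof --- ``iterating'' the polynomial congruence $B_{n+2}(x)\equiv xB_n(x)+B_{n+1}(x)\pmod 2$ to obtain a Touchard-type identity modulo $8$ --- would fail as described: a congruence valid only modulo $2$ cannot be composed with itself to produce information modulo $8$, because the $O(2)$ error terms introduced at each application are uncontrolled at the higher modulus. What is actually true (and is in substance what the cited paper establishes) is that modulo $2^k$ the Bell numbers satisfy the linear recurrence with characteristic polynomial $(x^2-x-1)^{2^{k-1}}$ reduced mod $2^k$; for $k=3$ this is a degree-$8$ recurrence with unit constant term, hence purely periodic, and one then computes that the period is $24$. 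Proving that recurrence requires the formal machinery of Lunnon--Pleasants--Stephens (or an equivalent argument), not a formal iteration of the mod-$2$ congruence. So your argument is incomplete at its acknowledged ``main obstacle''; either carry out that nontrivial step or, as the paper does, cite the known periodicity result.
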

\begin{proof}
Let $m\geqslant3$. Then $B_m>4$.
It suffices to prove that $B_m$ is not divisible by $8$.
By~\cite[Theorem~6.4]{Lunnon1979}, $B_{n+24}\equiv B_n\pmod{8}$ for all $n\in\omega$.
But $B_n$ modulo $8$ for $n$ from $0$ to $23$ are
\[
1,1,2,5,7,4,3,5,4,3,7,2,5,5,2,1,3,4,7,1,4,7,3,2.
\]
Hence, $B_m$ is not divisible by $8$.
\end{proof}

\begin{lemma}\label{sh53}
If $\fin(\mathscr{P}(A))$ is Dedekind infinite, then $A$ is power Dedekind infinite.
\end{lemma}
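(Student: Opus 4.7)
The plan is to route through $\Partf(A)$: I will construct a canonical finite-to-one function $\mathrm{At}\colon\fin(\mathscr{P}(A))\to\Partf(A)$, transfer Dedekind infiniteness through it, and then conclude via Facts~\ref{sh09} and~\ref{sh02}. The key idea is to exploit the Boolean algebra structure of $\mathscr{P}(A)$: every finite $F\subseteq\mathscr{P}(A)$ generates a finite subalgebra, and finite Boolean algebras are determined by their atoms.

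For $F\in\fin(\mathscr{P}(A))$, let $\mathrm{At}(F)$ be the set of atoms of the finite Boolean subalgebra of $\mathscr{P}(A)$ generated by $F$; explicitly,
\[
\mathrm{At}(F)=\Bigl\{\bigcap_{B\in F}B^{\epsilon(B)}\Bigm|\epsilon\in 2^F\Bigr\}\setminus\{\varnothing\},
\]
where $B^1=B$ and $B^0=A\setminus B$. Since each $x\in A$ belongs to exactly one such intersection, $\mathrm{At}(F)\in\Partf(A)$.

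The central claim is that $\mathrm{At}$ is finite-to-one: if $\mathrm{At}(F)=P$, then the Boolean algebra generated by $F$ coincides with the finite algebra $\mathcal{A}_P\subseteq\mathscr{P}(A)$ consisting of all unions of subsets of $P$ (which has $2^{|P|}$ elements), so $F\in\fin(\mathcal{A}_P)$ and there are at most $2^{2^{|P|}}$ such $F$. Therefore, given any injection $h\colon\omega\to\fin(\mathscr{P}(A))$ witnessing the hypothesis, the composition $\mathrm{At}\circ h\colon\omega\to\Partf(A)$ is finite-to-one, and Lemma~\ref{sh30} (applied with $\alpha=\omega$) provides an injection $\omega\to\Partf(A)$; hence $\Partf(A)$ is Dedekind infinite.

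To finish, Fact~\ref{sh09} gives $\Partf(A)\preccurlyeq\Part(A)\preccurlyeq\mathscr{P}(A^2)$, so $\mathscr{P}(A^2)$ is Dedekind infinite, i.e., $A^2$ is power Dedekind infinite, and Fact~\ref{sh02} then yields that $A$ itself is power Dedekind infinite. The main conceptual obstacle to a more direct argument is that in $\mathsf{ZF}$ the Dedekind infiniteness of $\mathscr{P}(X)$ does not in general imply the Dedekind infiniteness of $X$; passing through partitions of $A$ instead of through the naive inclusion $\fin(\mathscr{P}(A))\hookrightarrow\mathscr{P}(\mathscr{P}(A))$ neatly circumvents this.
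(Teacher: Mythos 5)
Your argument is correct, and every step checks out: for $F\in\fin(\mathscr{P}(A))$ the nonempty cells $\bigcap_{B\in F}B^{\epsilon(B)}$ do form a finite partition of $A$; each $B\in F$ is a union of such cells, so $\mathrm{At}(F)=P$ forces $F\subseteq\{\bigcup W\mid W\subseteq P\}$ and hence $\mathrm{At}$ is finite-to-one with fibres of size at most $2^{2^{|P|}}$; composing with an injection $h\colon\omega\to\fin(\mathscr{P}(A))$ and invoking Lemma~\ref{sh30} (with $\alpha=\omega$) makes $\Partf(A)$ Dedekind infinite; and the final transfer to $A$ being power Dedekind infinite via Fact~\ref{sh09} and Fact~\ref{sh02} is exactly the contrapositive of Corollary~\ref{sh10}, which you could cite directly to shorten the ending. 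The comparison with the paper is a little unusual here: the paper does not prove the lemma at all but defers to Theorem~3.2 of an external reference, so your proof is not a variant of the paper's argument but a self-contained replacement for the citation, using only facts already established in this paper. That is a genuine gain in self-containedness. Two cosmetic points: you should note the degenerate convention $\mathrm{At}(\varnothing)=\{A\}$ (harmless, since the hypothesis forces $A$ to be infinite), and your remark that the generated algebra \emph{coincides} with $\mathcal{A}_P$ is more than you need --- the inclusion $F\subseteq\mathcal{A}_P$ alone gives the finite-to-one bound.
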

\begin{proof}
See~\cite[Theorem~3.2]{Shen2023a}.
\end{proof}

\begin{theorem}\label{sh57}
For all non-empty sets $A$, $|\mathscr{P}(A)|\neq|\mathscr{B}_{\mathrm{fin}}(A)|$.
\end{theorem}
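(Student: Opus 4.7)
The plan is to proceed by contradiction, assuming $|\mathscr{P}(A)|=|\mathscr{B}_{\mathrm{fin}}(A)|$ for some non-empty $A$, and splitting into the cases where $A$ is finite and where $A$ is infinite.

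When $A$ is finite with $|A|=n\geqslant 1$, the cardinalities in question reduce to $2^n$ and $B_n$ (since every partition of a finite set is in $\mathscr{B}_{\mathrm{fin}}(A)$, so $|\mathscr{B}_{\mathrm{fin}}(A)|$ is the $n$th Bell number). Direct inspection handles $n=1$ (where $B_1=1\neq 2$) and $n=2$ (where $B_2=2\neq 4$); for $n\geqslant 3$, Lemma~\ref{sh52} tells us that $B_n$ is not a power of $2$, so $B_n\neq 2^n$. This yields a contradiction in every finite sub-case.

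When $A$ is infinite, I would first observe that the hypothesised bijection is a fortiori finite-to-one, so Corollary~\ref{sh55} rules out $A$ being power Dedekind infinite; hence $A$ is power Dedekind finite and $\mathscr{P}(A)$ is Dedekind finite. By the contrapositive of Lemma~\ref{sh53}, $\fin(\mathscr{P}(A))$ is then also Dedekind finite. The plan is to derive the final contradiction by showing that the bijection $f\colon\mathscr{P}(A)\to\mathscr{B}_{\mathrm{fin}}(A)$ nevertheless forces a Dedekind infinite subfamily of $\fin(\mathscr{P}(A))$. Composing $f$ with the injection $\mathrm{ns}\colon\mathscr{B}_{\mathrm{fin}}(A)\to\fin(\fin(A))\subseteq\fin(\mathscr{P}(A))$ given by Fact~\ref{sh11} yields a definable injection $\Psi\colon\mathscr{P}(A)\to\fin(\mathscr{P}(A))$ whose image consists of pairwise disjoint finite subsets of $A$ each of size at least $2$.

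The main obstacle lies in extracting the promised Dedekind infinite subset of $\fin(\mathscr{P}(A))$ from $\Psi$. A natural approach is to iterate a lifted self-map $\widehat{\Psi}$ of $\fin(\mathscr{P}(A))$, for instance $\widehat{\Psi}(F)=\bigcup_{X\in F}\Psi(X)$, starting from a seed $F_0$ with sufficiently non-trivial structure, and to argue that a suitable rank function on $\fin(\mathscr{P}(A))$ — such as the maximum cardinality of an element appearing in the family, or a well-founded tree-rank on nested families — strictly increases under $\widehat{\Psi}$, so that the iterates $\widehat{\Psi}^n(F_0)$ are pairwise distinct. Establishing this monotonicity, exploiting that $\mathrm{ns}(P)\neq\varnothing$ for all but the unique trivial $P\in\mathscr{B}_{\mathrm{fin}}(A)$ and that the blocks appearing in $\mathrm{ns}(f(X))$ become larger as one descends through $\Psi$, is the technical heart of the argument.
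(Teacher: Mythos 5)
Your finite case is fine, and your reduction of the infinite case to the right target --- showing that the hypothesised bijection forces $\fin(\mathscr{P}(A))$ to be Dedekind infinite, which contradicts the power Dedekind finiteness of $A$ obtained from Corollary~\ref{sh55} via the contrapositive of Lemma~\ref{sh53} --- is exactly the paper's strategy. But the step you yourself flag as the technical heart is a genuine gap, and the mechanism you propose for it does not work. The bijection $f$ is completely arbitrary, so there is no relation whatsoever between a set $X\subseteq A$ and the blocks in $\ns(f(X))$; in particular nothing forces any size-based rank (maximum block cardinality, nesting depth, and so on) to increase under $\widehat{\Psi}(F)=\bigcup_{X\in F}\Psi(X)$, and $\widehat{\Psi}$ may perfectly well have a fixed point or a finite cycle, in which case the iterates $\widehat{\Psi}^n(F_0)$ are not pairwise distinct and no copy of $\omega$ is produced. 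The assertion that ``the blocks appearing in $\ns(f(X))$ become larger as one descends'' has no justification and is false for a generic bijection.

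The paper fills this gap with a counting argument in which Lemma~\ref{sh52} is the key input for the \emph{infinite} case as well (that you invoke it only for finite $A$ is a symptom of the missing idea). Given finitely many finite families of subsets of $A$ already constructed, one closes their union under Boolean combinations: the induced finite partition $A/{\sim}$ has $k\geqslant 2$ blocks (arranged by seeding with $\{\{a\},\{b\}\}$) and generates a family $U$ of exactly $2^k$ unions of blocks. The images $\Phi[U]$ are finitely many members of $\scrBf(A)$, so all their non-singleton blocks lie in a finite set $D$ with $|D|=m\geqslant 3$, whence $\Phi[U]$ is contained in the set $E$ of the $B_m$ finitary partitions whose non-singleton blocks lie inside $D$. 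Since $2^k\leqslant B_m$ and $B_m$ is not a power of $2$, the inclusion $\Phi[U]\subseteq E$ is proper, and $\Phi^{-1}[E\setminus\Phi[U]]$ is a non-empty finite set of subsets of $A$ disjoint from $U$, hence new; iterating yields the desired injection of $\omega$ into $\fin(\mathscr{P}(A))$. You would need to replace your rank-monotonicity idea with an argument of this kind.
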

\begin{proof}
If $A$ is a singleton, $|\mathscr{P}(A)|=2\neq1=|\mathscr{B}_{\mathrm{fin}}(A)|$.
Suppose $|A|\geqslant2$, and fix two distinct elements $a,b$ of $A$.
Assume toward a contradiction that there is a bijection $\Phi$ between $\mathscr{P}(A)$ and $\scrBf(A)$.
We define by recursion an injection $f$ from $\omega$ into $\fin(\mathscr{P}(A))$ as follows.

Take $f(0)=\{\{a\},\{b\}\}$. Let $n\in\omega$, and assume that $f(0),\dots,f(n)$ have been defined and
are pairwise distinct elements of $\fin(\mathscr{P}(A))$. Let $\sim$ be the equivalence relation on $A$ defined by
\[
x\sim y\quad\text{if and only if}\quad\forall C\in f(0)\cup\dots\cup f(n)(x\in C\leftrightarrow y\in C).
\]
Since $f(0),\dots,f(n)$ are finite, the quotient set $A/{\sim}$ is a finite partition of~$A$.
Let $k=|A/{\sim}|$ and let $U=\{\bigcup W\mid W\subseteq A/{\sim}\}$. Then $|U|=2^k$ and
\begin{equation}\label{sh58}
f(0)\cup\dots\cup f(n)\subseteq U.
\end{equation}
Since $\{a\},\{b\}\in A/{\sim}$, we have $k\geqslant2$. Let $D=\bigcup\{\bigcup\ns(P)\mid P\in\Phi[U]\}$.
Since $U$ is finite and $\Phi[U]\subseteq\scrBf(A)$, it follows that $D$ is finite.
Let $m=|D|$ and let $E=\{P\in\scrBf(A)\mid\bigcup\ns(P)\subseteq D\}$. Then $|E|=B_m$ and $\Phi[U]\subseteq E$.
Hence, $2^k=|U|=|\Phi[U]|\leqslant|E|=B_m$. Since $k\geqslant2$, we have $m\geqslant3$,
which implies that $B_m\neq2^k$ by Lemma~\ref{sh52}, and hence $\Phi[U]\subset E$.
Now, we define $f(n+1)=\Phi^{-1}[E\setminus\Phi[U]]$. Then $f(n+1)$ is a non-void finite subset of $\mathscr{P}(A)$.
By~\eqref{sh58}, it follows that $f(n+1)$ is disjoint from each of $f(0),\dots,f(n)$, and thus is distinct from each of them.

The existence of the above injection $f$ shows that $\fin(\mathscr{P}(A))$ is Dedekind infinite,
which implies that, by Lemma~\ref{sh53}, $A$ is power Dedekind infinite, contradicting Corollary~\ref{sh55}.
\end{proof}

\section{Open questions}
We conclude the paper with the following five open questions.

\begin{question}
Are the following statements consistent with~$\mathsf{ZF}$?
\begin{enumerate}
\item There exist an infinite set $A$ and a finite-to-one function from $\mathscr{B}(A)$ to $A$.
\item There exists an infinite set $A$ for which $|\mathscr{B}(A)|<|\mathcal{S}_3(A)|$,
      where $\mathcal{S}_3(A)$ is the set of permutations of $A$ with exactly $3$ non-fixed points.
\item There exists an infinite set $A$ for which $|\mathscr{B}(A)|=|\mathrm{seq}^{\text{1-1}}(A)|$.
\item There exists an infinite set $A$ for which $|\mathrm{Part}_{\mathrm{fin}}(A)|<|\mathscr{B}(A)|$.
\item There exist an infinite set $A$ and a surjection from $A^2$ onto $\Part(A)$.
\end{enumerate}
\end{question}

\subsection*{Acknowledgements}
I should like to give thanks to Professor Ira Gessel for providing a proof of Lemma~\ref{sh52}.
%We would like to give thanks to an anonymous referee
%for catching some errors and making useful suggestions.
The author was partially supported by National Natural Science Foundation of China grant number 12101466.

%%%%%%%%%%%%%%%%%%%%%%%%%%%%%%%%%%%

\end{document}